\documentclass[12pt]{iopart}
\usepackage{graphicx}
\expandafter\let\csname equation*\endcsname\relax
\expandafter\let\csname endequation*\endcsname\relax
\usepackage{amsmath,amsfonts,amsthm,amssymb}
\usepackage{bm}
\usepackage{booktabs}

\usepackage{epstopdf}
\usepackage{float}
\usepackage{color}
\usepackage{cases}  
\usepackage{subfigure}
\usepackage{algpseudocode}
\usepackage{caption}

\usepackage{multirow}
\usepackage{bigstrut}
\usepackage{longtable}
\usepackage{rotating}
\usepackage{esint,algorithm}
\usepackage{cite}
\usepackage{tablefootnote}
\usepackage[numbers]{natbib}
\usepackage{subcaption}

\numberwithin{equation}{section}
\allowdisplaybreaks[4]

\newtheorem{theorem}{Theorem}
\newtheorem{lemma}{Lemma}
\newtheorem{remark}{Remark}
\newtheorem{proposition}{Proposition}

\usepackage[top=0.8in,bottom=0.5in,left=0.85in,right=1.05in]{geometry}
\allowdisplaybreaks[4] 

\def\diint{\displaystyle\iint}

\newtheorem{assumption}{Assumption}
\newcommand{\newblock}{\hskip .11em\@plus.33em\@minus.07em}  

\begin{document}
	\title[Inexact ADMM for Parabolic Control]{An Inexact Alternating Direction Method of Multipliers for Constrained Parabolic Optimal Distributed Control Problems}

	\author{Haiming Song$^{1}$,
		Jinda Yang$^{2}$, 
		Yuran Yang$^{1}$, and
		Jianhua Yuan$^{3,4}$}
	
	\address{%
		$^{1}$ School of Mathematics, Jilin University,
		Changchun 130012, China\\
		$^{2}$ School of Mathematics and Statistics, Lanzhou University,
		Lanzhou 730000, China\\
		$^{3}$ School of Science, Beijing University of Post and Telecommunications,
		Beijing 100876, China\\
		$^{4}$ Key Laboratory of Mathematics and Information Networks
		(Beijing University of Posts and Telecommunications),
		Ministry of Education, Beijing 100876, China
	}
	
	\ead{songhaiming@jlu.edu.cn}
	\ead{yangyr24@mails.jlu.edu.cn}
	\ead{yangjinda@lzu.edu.cn}
	\ead{jianhuayuan@bupt.edu.cn}
	\date{\today}  

	
	\begin{abstract}
		Solving parabolic optimal control problems can be inherently challenging in the field of science and engineering, especially with constraints on the nonsmooth distributed  control. Motivated by the extensive applicability of the alternating direction method of multipliers, in this paper we develop a novel inexact algorithmic framework for parabolic optimal distributed control problems with control constraints. By decoupling the control constraint and possible nonsmooth objective from the optimal control problem,  our aim is to efficiently solve the subproblem constrained by the parabolic state equation, for which computing a sufficiently accurate numerical solution can be prohibitively expensive. Given this high computational cost, we consider that it may not always be justifiable to compute a highly accurate solution of the subproblem at every iteration. Hence, we propose an inexact strategy for solving the parabolic equation constrained subproblem. Under mild and flexible conditions on the parameters, we prove global convergence and a linear convergence rate for the resulting algorithmic framework. In practice, our inexact algorithmic framework is easily implementable with applicable nested iterations. Numerical experiments are performed on different cases, and the results demonstrate the validity and trustworthy performance of the proposed methods.

		\noindent{\textbf{Keywords}: Parabolic optimal distributed control,  alternating direction method of multipliers, inexact framework, linear convergence rate.}
		
		\noindent{\textbf{MSC} {49M41, 35Q90, 35Q93, 65K05, 90C25}}
		\maketitle
	\end{abstract}

	

	\section{Introduction}\label{sec:introduction}
	Optimal control of partial differential equations (PDEs) has important applications in various scientific and industrial fields, such as physics, chemistry, medicine, finance, and engineering. We refer to, for instance, \cite{colli2022optimal,garcke2021sparse,sprekels2021sparse,bersetche2025fractional,wang2025adaptive,chang2022sparse,baraldi2022proximal} and the references therein. In particular, constrained parabolic optimal distributed control problems play a crucial role. In such problems, the state is governed by a linear parabolic equation, and the distributed control enters as a source term subject to additional constraints. Specifically, the control is allowed to be sparse and may act either on the whole spatial domain or only on a prescribed subdomain.
	
	From an algorithmic perspective, we are facing nonsmooth optimization problems that couple state equation constraint with additional control constraints, and that after numerical discretization, typically lead to large and complicated algebraic systems, especially in the context of time-dependent PDE-constrained problems. Numerically, such problems are better solved by algorithms that carefully exploit the structure and properties of the underlying model, rather than by generic off-the-shelf methods.

	In this paper, we consider the following constrained parabolic optimal distributed control problem:
	\begin{equation}\label{eq:obj}
		\min\limits_{u}\ \mathcal{J}(u)=\dfrac{\gamma_{d}}{2}\diint_{Q}|y(u)-y_{d}|^{2}{\rm d}x{\rm d}t+\dfrac{1}{2}\diint_{G}|u|^{2}{\rm d}x{\rm d}t+\gamma_{s} \diint_{G}|u|{\rm d}x{\rm d}t ,
	\end{equation}
	where $Q=\Omega\times(0,T)$ and $G=\Omega_{\text{sub}}\times(0,T)$, with $\Omega_{\text{sub}}$ an open subset of the spatial domain $\Omega\subset\mathbb{R}^{d}(d\geq 1)$, and $0<T<\infty$ a fixed final time. The positive constant $\gamma_{d}$ is the weight of the tracking term, the nonnegative constant $ \gamma_{s}$ is the sparsity-promoting parameter, and the desired state $y_{d}$ is given in $L^{2}(Q)$. Moreover, the state variable $y(u)$ corresponding to a control variable $u$ is governed by the parabolic equation:
	\begin{equation}\label{eq:state equation}
		\left\{
		\begin{aligned}
			&\frac{\partial y}{\partial t}-\nu\Delta y+a_{0}y=\chi_{G} u, &&\text{\,in}\quad\Omega\times(0,T),\\
			&y=0, &&\text{on}\quad\partial\Omega\times (0,T),\\
			&y(0)=\phi,&&\text{\,in}\quad\Omega.
		\end{aligned}
		\right.
	\end{equation}
	Here $\chi_{G}$ denotes the indicator function of the set $G$, which characterizes the prescribed control region $\Omega_{\text{sub}}$. The initial datum  $\phi$ is given in $L^{2}(\Omega)$, the positive coefficient $a_{0}\in L^{\infty}(Q)$ and $\nu$ is a positive constant. In addition, the control constraint is imposed through the admissible set $\mathcal{C}$ defined by
	\begin{equation}\label{eq:control constraint}
		\mathcal{C}=\left\{u|u\in L^{2}(G),~a \leq u(x,t) \leq b \text{~a.e.~} (x,t) \in G \right\},
	\end{equation}
	where $a$ and $b$ are given constants. The well-posedness of the optimal control problem \eqref{eq:obj} has been established under standard assumptions including the existence and uniqueness of an optimal solution; see, for instance, \cite{troeltzsch2010optimal, lions1971optimal, boyd2004convex}. Such problems arise in a wide range of applications, for example in clinical medicine \cite{glowinski2008exact} and engineering \cite{troeltzsch2010optimal}, among many others \cite{song2023numerical,hinze2009optimization,Wang2023Duality}. Figure~\ref{fig:multi_time}  illustrates an example of the evolution of the optimal control and its corresponding state.
	
	\begin{figure}[H]
		\centering
		
		\subfigure[$t=0.2$]{\includegraphics[width=0.23\linewidth]{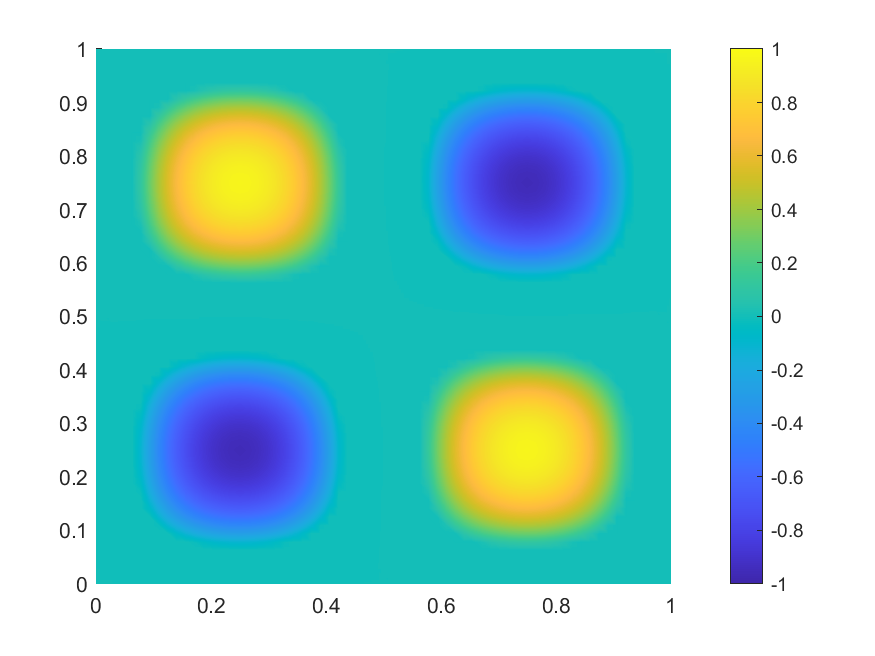}}\hfill
		\subfigure[$t=0.4$]{\includegraphics[width=0.23\linewidth]{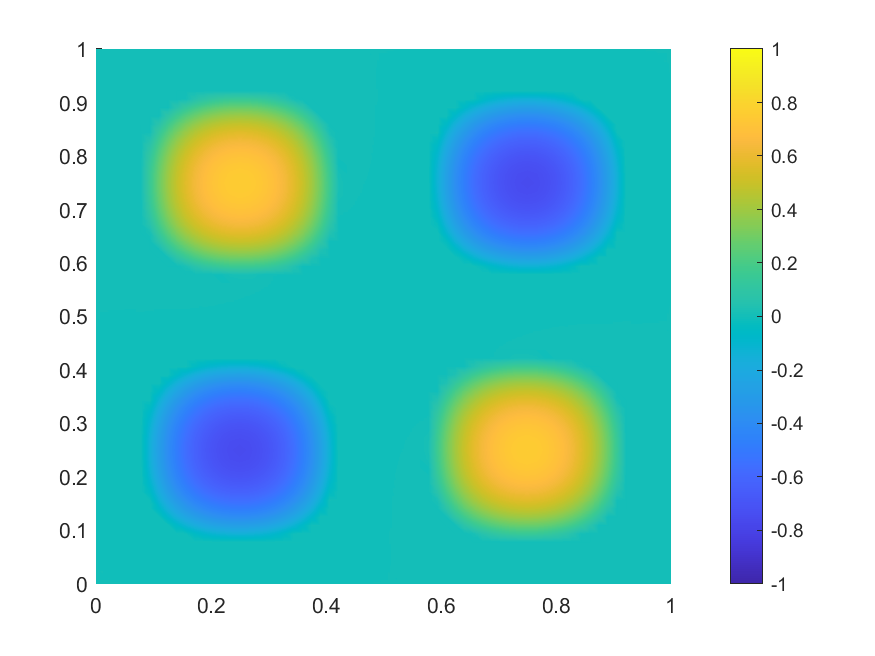}}\hfill
		\subfigure[$t=0.6$]{\includegraphics[width=0.23\linewidth]{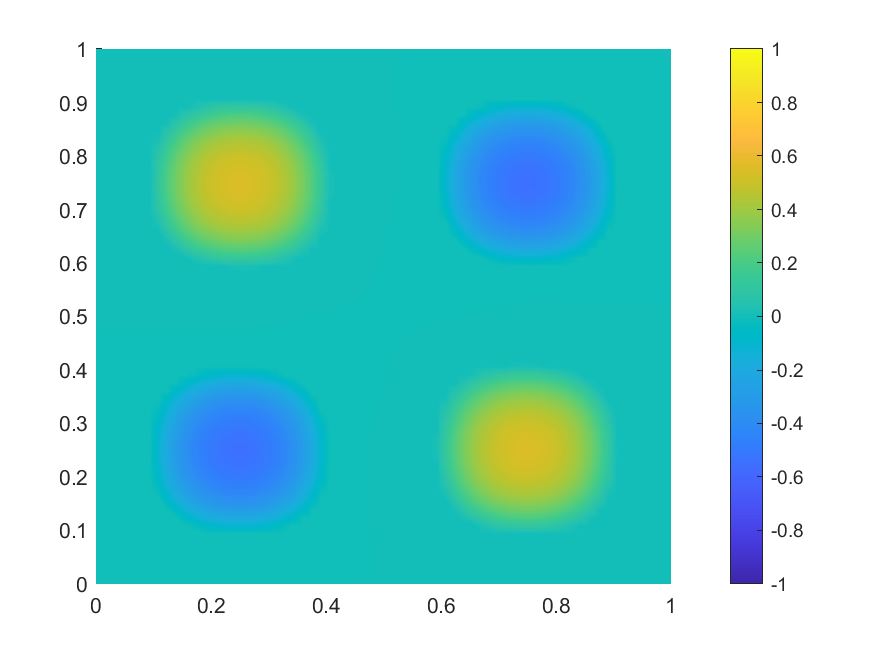}}\hfill
		\subfigure[$t=0.8$]{\includegraphics[width=0.23\linewidth]{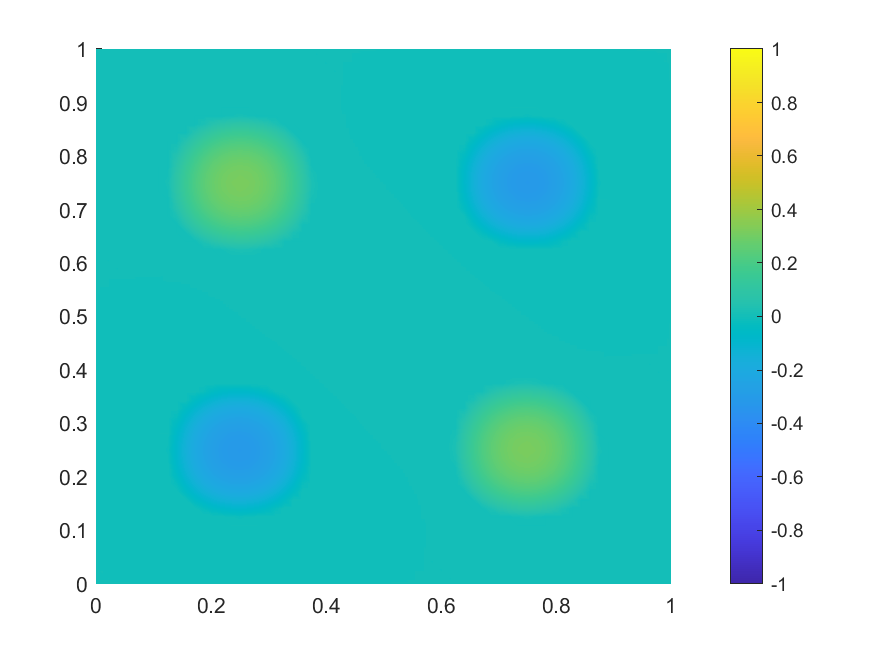}}
		
		\vspace{0.3cm}
		
		\subfigure[$t=0.2$]{\includegraphics[width=0.23\linewidth]{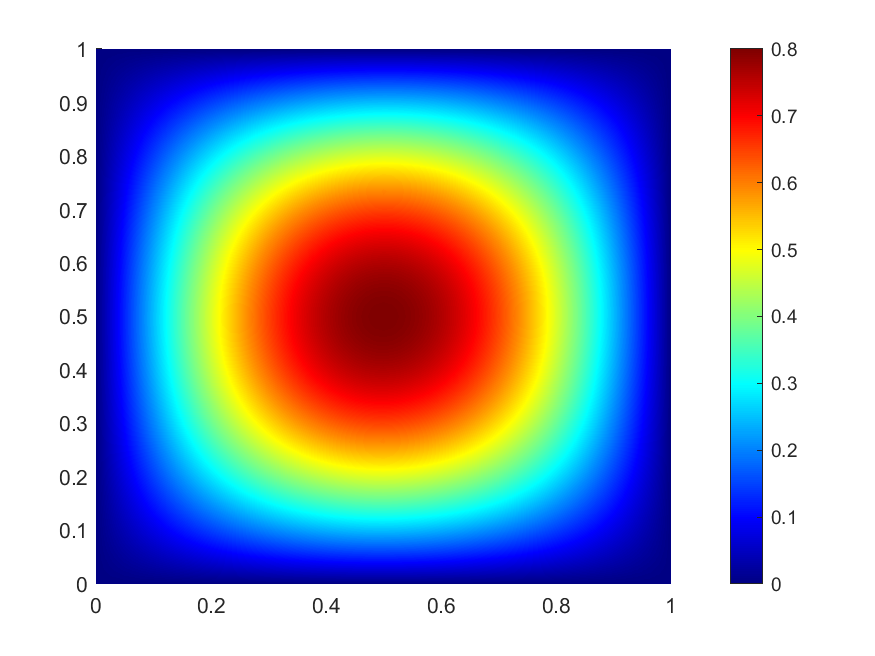}}\hfill
		\subfigure[$t=0.4$]{\includegraphics[width=0.23\linewidth]{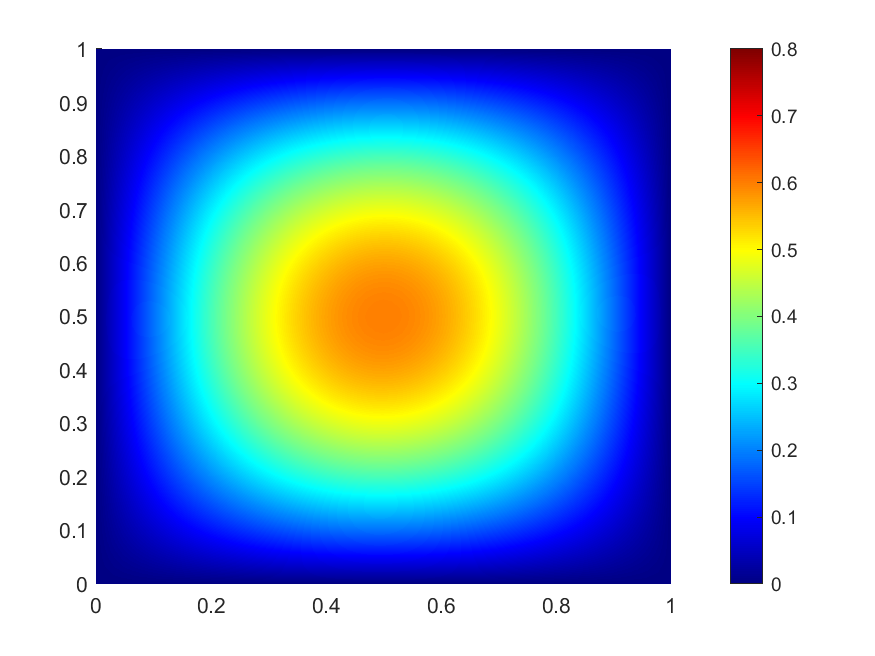}}\hfill
		\subfigure[$t=0.6$]{\includegraphics[width=0.23\linewidth]{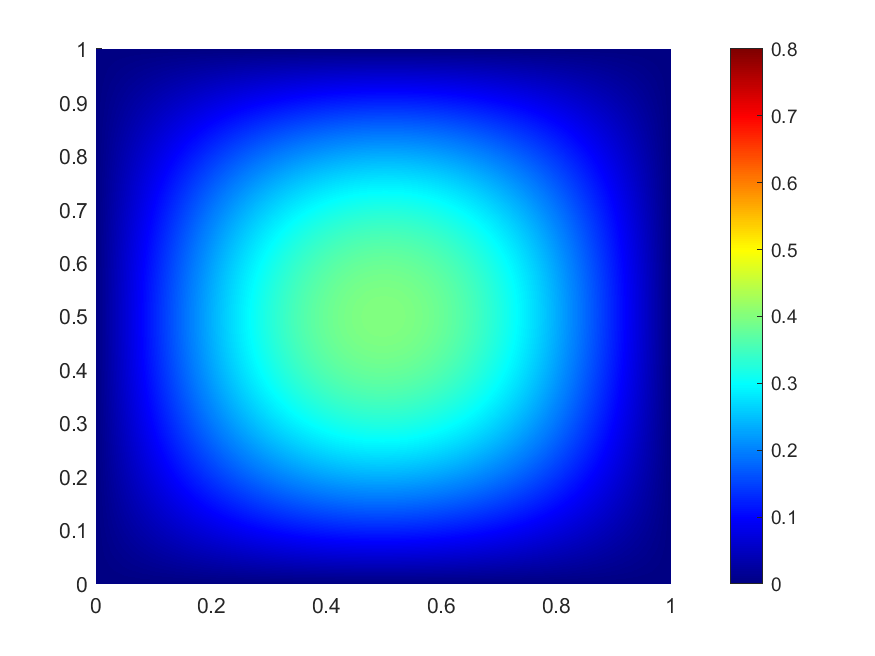}}\hfill
		\subfigure[$t=0.8$]{\includegraphics[width=0.23\linewidth]{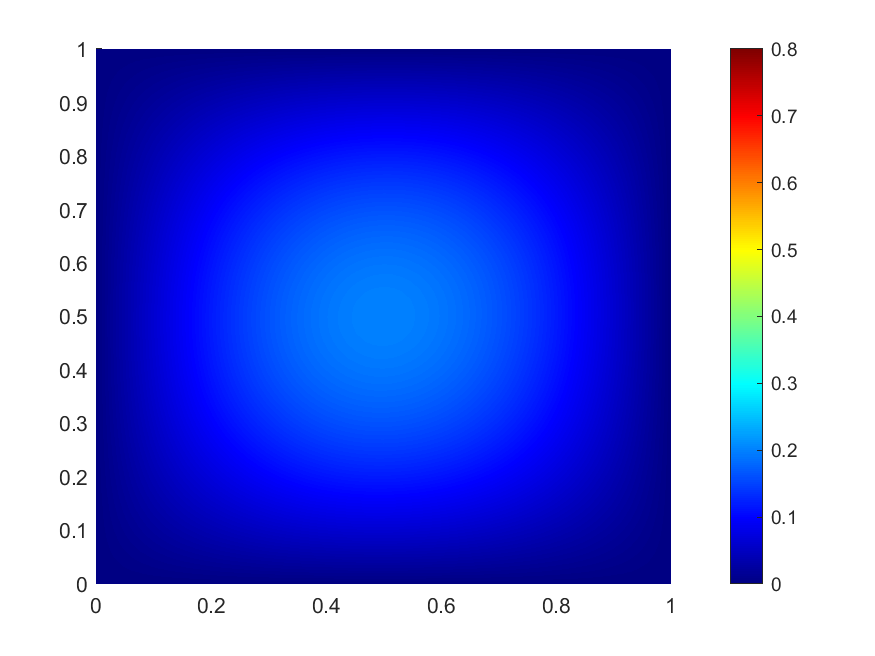}}
		
		\caption{Time-dependent control $u$ (top row) and the evolution of state $y$ (bottom row)}
		\label{fig:multi_time}
	\end{figure}
	
	In the special case that the control is unconstrained and the sparsity term is absent, i.e., $\mathcal{C}=L^{2}(G)$ and $\gamma_{s}=0$, the problem reduces to an unconstrained and smooth optimal control problem, which has been extensively studied theoretically and numerically in the literature, see, e.g., \cite{borzi2003multigrid,gander2016schwarz,glowinski2008exact,herzog2010algorithms,mathew2010analysis,mcdonald2016all,pearson2012regularization,rees2010optimal,ulbrich2007generalized}.
	Moreover, the projected gradient descent (PGD) methods, the semismooth Newton (SSN) methods \cite{casas2024convergence}, and the augmented Lagrangian methods (ALM) \cite{xu2021iteration} are popular approaches for constrained and nonsmooth optimal control problems.
	
	Since they are easy to implement, PGD methods remain among the methods of choice for complex problems. Note also that the parabolic optimal control problems require much more computational effort due to the additional time variable, and thus PGD methods are suitable for numerical simulations and constitute a valuable alternative to methods with higher orders of convergence; see, e.g., \cite{herzog2010algorithms,troeltzsch2010optimal,de2015numerical}. A crucial step in the implementation of PGD methods is the calculation of the descent stepsizes, which are usually expensive to calculate exactly or up to high accuracy. To estimate proper stepsizes, a common idea is to employ a line search strategy, such as a backtracking technique  based on the Armijo or Wolfe conditions; see, e.g., \cite{boyd2004convex,troeltzsch2010optimal,nocedal2006numerical}. It should be mentioned, however, that enforcing these line search conditions demands the repeated evaluation of the objective functional or its derivative, which may become very costly in practice because each evaluation involves solving the state equation and the corresponding adjoint equation repeatedly.
	
	The interest in SSN methods is driven by the increased complexity of the optimality conditions of the problem, due to the nonsmooth part of the objective and the indicator function associated with the control constraint, a situation in which PGD methods are not applicable; see, e.g., \cite{hinze2009optimization,troeltzsch2010optimal}. With a semismooth Newton direction constructed using the Clarke generalized Jacobian, SSN methods employ Newton-type iterations based on active-set strategies that iteratively identify active and inactive components with respect to the nonsmooth objective term and the control constraints. It was shown in \cite{hintermuller2002primal,hintermuller2010semismooth} that SSN methods exhibit local superlinear convergence for suitably chosen initial values, under the standard assumption that all Newton systems are solved exactly. Nevertheless, the Newton systems encountered in SSN methods are typically ill-conditioned, which underscores the need for appropriate preconditioners to ensure efficiency; see, e.g., \cite{stoll2014one,schiela2014operator,ulbrich2007generalized}. When employing SSN methods, the nonsmooth objective term and the control constraint are handled together with the associated parabolic equation; hence one has to recompute the preconditioner at each iteration as the active set varies. Furthermore, we often face unavoidable large-scale Newton systems when fine meshes are used to discretize time-dependent problems. Although the above issues can be mitigated by adaptive strategies \cite{bergounioux1999primal} and inner iterations \cite{porcelli2015preconditioning}, the computational load associated with solving the Newton systems still remains substantial; see, e.g., \cite{stoll2012preconditioning,stoll2014one}, and may impede the practical efficiency of SSN methods.
	
	As a fundamental approach to various constrained problems, the augmented Lagrangian method (ALM) has also been applied to optimal control problems. By handling the inequality constraints directly, the resulting iteration schemes become rather elaborate and complicated, which makes it challenging to analyze their convergence rate. The alternating direction method of multipliers (ADMM), which may be regarded as a splitting version of the classical ALM, has also been used to solve parabolic optimal control problems \cite{glowinski2022application,song2024admm}.
	
	Inspired by the above considerations, we aim to apply ADMM and to develop a uniform iteration scheme that can effectively solve the constrained parabolic optimal distributed control problem (\ref{eq:obj}) with a nonsmooth part of the objective and control constraints. The key advantage of ADMM lies in the fact that the main subproblem in ALM is decomposed and the resulting subproblems can be solved more easily at each iteration in a Gauss–Seidel fashion. Keeping this in mind, our first motivation is to separate the parabolic equation constraint (\ref{eq:state equation}) from the nonsmooth part of the objective in (\ref{eq:obj}) and the control constraints (\ref{eq:control constraint}) into two different subproblems. As a result, one of the subproblems should be tackled using iteration-independent coefficient matrices derived from the numerical discretization of the parabolic equation, whereas the other can be solved explicitly. However, it is important to note that the subproblem constrained by the parabolic equation is time-dependent and thus remains high-dimensional after space–time discretization. Although this subproblem can be solved by inner iterations, achieving high accuracy comes at a high computational cost. Hence, our second motivation is to design an adaptive inexact strategy for this subproblem, in order to alleviate the high cost of solving it exactly at each iteration. Numerically, the adaptive inexact strategy should be readily implementable, independent of the discretization mesh sizes, and capable of achieving sufficient accuracy. Theoretically, the adaptive inexact strategy must be robust enough to guarantee the convergence of the resulting inexact ADMM.
	
	The remainder of the paper is organized as follows. In Section~\ref{sec:Algorithmic framework of inexact ADMM}, we propose the algorithmic framework of the inexact ADMM based on a systematic inspection of the model problem. Section~\ref{sec:Convergence analysis} is devoted to the convergence analysis of the inexact ADMM framework. The numerical implementation and experiments are presented in Section~\ref{sec:numerical experiments}. Finally, conclusions are drawn in Section~\ref{sec:conclusion}.

	\section{Algorithmic framework}\label{sec:Algorithmic framework of inexact ADMM}
	In this section, we introduce some notation and elaborate on the essential concept of our inexact ADMM for the constrained parabolic distributed optimal control problem \eqref{eq:obj}.
	
	For clarity and convenience, we use $U$ and $Y$ for the spaces $L^{2}(G)$ and $L^{2}(Q)$, respectively.  We let $\langle\cdot,\cdot\rangle$ and $\|\cdot\|$ denote the $L^{2}$ inner product and norm with respect to $U$ or $Y$ without causing confusion. The $L^{1}$-norm on $U$ is denoted by $\| \cdot \|_{1}$. To simplify the presentation, we introduce the control-to-state operator $S:U\rightarrow Y$ defined by $y = S(u)$, associated with the state equation (\ref{eq:state equation}). By the superposition principle \cite{troeltzsch2010optimal}, it is evident that the affine operator $S$ can be represented as follows:
	\begin{equation*}\label{eq:affine operator}
		S(u) = \bar{S}u + S(0),
	\end{equation*}
	where the linear part $\bar{S} \in \mathcal{L}(U, Y)$ solves \eqref{eq:state equation} with zero initial data (i.e., $\phi=0$). We let $\bar{S}^{\ast}:Y\rightarrow U$ defines the adjoint operator of $\bar{S}$.

	By introducing an auxiliary variable $z$, the problem (\ref{eq:obj}) can be reformulated as:
	\begin{equation}\label{eq:reformulated model problem}
		\begin{aligned}
			\min\limits_{u,z\in U}&~ & &J(u)+R(z)\\
			\text{s.~t.}&~ & &u=z,
		\end{aligned}
	\end{equation}
	where
	\begin{equation*}
		\begin{aligned}
			J(u)=\frac{\gamma_{d}}{2}\|S(u)-y_{d}\|^{2}+\frac{1}{2}\|u\|^{2},\quad R(z)=\gamma_{s}\|z\|_{1}+I_{\mathcal{C}}(z).
		\end{aligned}
	\end{equation*}
	and $I_{\mathcal{C}}$ denotes the extended-valued indicator function of the admissible set $\mathcal{C}$ for the control constraints.
	With a penalty parameter $\beta>0$, the associated augmented Lagrangian functional is given by
	\begin{equation}\label{eq:augmented Lagrangian functional}
		L_{\beta}(u,z,\lambda):=J(u) +R(z)- \langle\lambda,u-z\rangle + \frac{\beta}{2}\| u-z\|^{2},
	\end{equation}
	where $\lambda\in  U $ is the Lagrange multiplier associated with the linear constraint $u=z$. Then, we can directly apply the classic ADMM to (\ref{eq:reformulated model problem}) as follows:
	\begin{subnumcases}
		{}
		\hat{u}^{k+1}= \mathop{\arg\min}\limits_{u\in U}
		L_{\beta}(u,\hat{z}^{k},\hat{\lambda}^{k}),\label{eq:classic ADMM-u}\\
		\hat{z}^{k+1}= \mathop{\arg\min}\limits_{z\in U}
		L_{\beta}(\hat{u}^{k+1},z,\hat{\lambda}^{k}),\label{eq:classic ADMM-z}\\
		\hat{\lambda}^{k+1}=\hat{\lambda}^{k}-\beta(\hat{u}^{k+1}-\hat{z}^{k+1}). \label{eq:classic ADMM-lambda}
	\end{subnumcases}
	Despite the concision of the above scheme, it is ideational that each subproblem requires exact minimization. In practice, not all subproblems can be easily solved in a high precision.
	
	Let us take a close look at the subproblems (\ref{eq:classic ADMM-u}) and (\ref{eq:classic ADMM-z}). From (\ref{eq:augmented Lagrangian functional}), it follows that the $u$-subproblem (\ref{eq:classic ADMM-u}) takes the form of the following smooth parabolic optimal control problem without control constraint:
	\begin{equation*}\label{eq:u-subrpoblem}
		\hat{u}^{k+1} = \underset{u\in U}{\arg\min} \left\{J(u) - \langle\hat{\lambda}^{k},u-\hat{z}^{k}\rangle + \frac{\beta}{2}\| u-\hat{z}^{k}\|^{2}\right\}.
	\end{equation*}
	With the constraint of state equation (\ref{eq:state equation}) presented by $y = S(u)$, the solution $\hat{u}^{k+1}$ satisfies
	\begin{equation}\label{eq:opt condition of u-subproblem}
		\hat{u}^{k+1} +\gamma_{d} \hat{p}^{k+1} - \hat{\lambda}^{k} + \beta (\hat{u}^{k+1}-\hat{z}^{k}) = 0,
	\end{equation}
	where the adjoint variable $\hat{p}^{k+1} = \bar{S}^{\ast}(S(\hat{u}^{k+1})-y_{d})$ satisfies the following adjoint equation:
	\begin{equation}\label{eq:adjoint equation k+1}
		\left\{
		\begin{aligned}
			&-\frac{\partial \hat{p}^{k+1}}{\partial t}-\nu\Delta \hat{p}^{k+1}+a_{0}\hat{p}^{k+1}=\hat{y}^{k+1}-y_{d},& \quad  &\text{\,in} \quad\Omega\times(0,T),\\
			&\hat{p}^{k+1}=0,& \quad  &\text{on}\quad\Gamma\times (0,T),\\
			&\hat{p}^{k+1}(T)=0,& \quad  &\text{\,in} \quad\Omega,
		\end{aligned}
		\right.
	\end{equation}
	in which $\hat{y}^{k+1}$ is the solution of
	\begin{equation}\label{eq:state equation k+1}
		\left\{
		\begin{aligned}
			&\frac{\partial \hat{y}^{k+1}}{\partial t}-\nu\Delta \hat{y}^{k+1}+a_{0}\hat{y}^{k+1}=\chi_{G} \hat{u}^{k+1},&\quad  &\text{\,in} \quad \Omega\times(0,T),\\
			&\hat{y}^{k+1}=0,&\quad  &\text{on}\quad\partial\Omega\times (0,T),\\
			&\hat{y}^{k+1}(0)=\phi& \quad  &\text{\,in} \quad\Omega.
		\end{aligned}
		\right.
	\end{equation}
	
	For the $z$-subproblem (\ref{eq:classic ADMM-z}), we have
	\begin{equation*}\label{eq:z-subproblem}
		\hat{z}^{k+1} = \underset{z\in U}{\arg\min} \left\{R(z) - \langle\hat{\lambda}^{k},\hat{u}^{k+1}-z\rangle + \frac{\beta}{2}\| \hat{u}^{k+1}-z\|^{2}\right\},
	\end{equation*}
	and it is easy to check that this subproblem admits the explicit solution
	\begin{equation*}\label{eq:z-subproblem solver}
		\hat{z}^{k+1} = P_{\mathcal{C}}\left(S_{\frac{\gamma_{s}}{\beta}}(\hat{u}^{k+1} - \beta^{-1}\hat{\lambda}^{k} )\right),
	\end{equation*}
	where the projection operator $P_{\mathcal{C}}$ and the soft-thresholding operator $ S_{\frac{\gamma_{s}}{\beta}}$ are defined pointwise by
	\begin{align*}
		&P_{\mathcal{C}}(v(x,t)) =\min \left\{b,\max\{v(x,t),a\}\right\},\\
		&S_{\frac{\gamma_{s}}{\beta}}(v(x,t))=\text {sign}(v(x,t))\max\{|v(x,t)|-\beta^{-1}\gamma_{s},0\}.
	\end{align*}
	
	Based on the above observations, it can be seen that the computational difficulties of solving the subproblems  (\ref{eq:classic ADMM-u}) and  (\ref{eq:classic ADMM-z}) are quite different. We can  obtain  the closed-form solution of $z$-subproblem  (\ref{eq:classic ADMM-z}) by the soft-thresholding operator and projection onto the set $\mathcal{C}$. Nevertheless, as a standard optimization problem constrained by the parabolic equation (\ref{eq:state equation}), the solution of $u$-subproblem  (\ref{eq:classic ADMM-u}) is essentially determined by the coupled time-dependent system (\ref{eq:opt condition of u-subproblem})-(\ref{eq:state equation k+1}) at $k$-th iteration, and consequently, attaining a solution to this system with high accuracy will spend expensive cost. Hence, it is advisable to solve the $u$-subproblem  (\ref{eq:classic ADMM-u}) iteratively and inexactly.
	
	A naive idea is to perform nested iterations such that
	\begin{equation}\label{eq:ideal inexact criterion}
		\|u^{k+1} - \tilde{u}^{k+1}\| \leq \theta_{k},
	\end{equation}
	where $u^{k+1}$ and $\tilde{u}^{k+1}$ denote, respectively, the inexact solution and the auxiliary solution of the $u$-subproblem associated with $(u^{k},z^{k},\lambda^{k})^{\top}$ at $k$-th iteration. Note that the auxiliary solution $\tilde{u}^{k+1}$ is distinct from the exact solution $\hat{u}^{k+1}$,  because here the solution $\tilde{u}^{k+1}$  depends on  the inexact triplet $(u^{k},z^{k},\lambda^{k})^{\top}$.
	
	Clearly, the above criterion can not be applied directly in practical implementations, and we should employ an executable criterion for the inexact iteration. To this end, we define $\sigma_{k}(u)$ by
	\begin{equation}\label{eq:criterion function}
		\sigma_{k}(u) = ( 1+\beta)u +\gamma_{d}\bar{S}^{\ast}(S(u)-y_{d}) - (\beta z^{k} + \lambda^{k}).
	\end{equation}
	From the optimality condition of $\tilde{u}^{k+1}$ and the definitions of operators $S$ and $\bar{S}^{\ast}$, it follows that $\sigma_{k}(\tilde{u}^{k+1}) = 0$ for any $k>0$. Given this, we propose the following criterion for solving $u$-subproblem inexactly:
	\begin{equation}\label{eq:inexact criterion}
		\|\sigma_{k}(u^{k+1})\|\leq \theta_{k},
	\end{equation}
	where $\theta_{k}$ is a criterion parameter. In addition, we let $\Theta_{k}$ denotes the set
	\begin{equation*}\label{eq:inexact solution set}
		\Theta_{k} = \{u|~\|\sigma_{k}(u)\|\leq \theta_{k} \}.
	\end{equation*}
	Then, for the problem (\ref{eq:obj}), the framework of inexact ADMM can be given by
	\begin{subnumcases}
		{}
		u^{k+1}\in \Theta_{k},\label{eq:inexact ADMM-u}\\
		z^{k+1}= P_{\mathcal{C}}\left(S_{\frac{\gamma_{s}}{\beta_{k}}}(u^{k+1} - \beta^{-1}_{k}\lambda^{k} )\right),\label{eq:inexact ADMM-z}\\
		\lambda^{k+1}=\lambda^{k}-\beta_{k}(u^{k+1}-z^{k+1}), \label{eq:inexact ADMM-lambda}
	\end{subnumcases}
	where the penalty parameter $\beta_{k}$ can be adjusted at each iteration.
	
	\begin{remark}
		It is  apparent that, in the above framework, the equation constraint have been decoupled from the nonsmooth  objective term and the control constraint at each iteration, and the subproblems (\ref{eq:inexact ADMM-z}) and (\ref{eq:inexact ADMM-lambda}) can be solved easily. For the subproblem (\ref{eq:inexact ADMM-u}), the criterion parameter $\theta_{k}$ does not need to be prescribed a priori as a fixed small constant and is independent of the discretization mesh sizes. Indeed, the criterion parameter $\theta_{k}$ can be set specified according to an appropriate rule (see Section~\ref{sec:numerical experiments} for details), and the inexact iterations can be performed automatically. In addition, the subproblem (\ref{eq:inexact ADMM-u}) can be solved by a suitable nested iteration, such as the conjugate gradient (CG) method described in Section~\ref{sec:numerical experiments}. With these features, the framework (\ref{eq:inexact ADMM-u})-(\ref{eq:inexact ADMM-lambda}) can be easily implemented.
	\end{remark}

	
	\section{Convergence analysis}\label{sec:Convergence analysis}
	In this section, we first present some mild assumptions and useful results for the subsequent analysis. Then we establish the convergence of the proposed inexact ADMM framework (\ref{eq:inexact ADMM-u})-(\ref{eq:inexact ADMM-lambda}).
	
	\subsection{Preliminaries}\label{sec:Preliminaries}
	We begin this subsection with the following assumptions that will be used throughout the paper.
	\begin{assumption}\label{assume:augmented parameter}
		The sequence of augmented penalty parameters $\{\beta_{k}\}_{0}^{\infty}$ satisfies
		\begin{equation*}
			\beta_{0}>0,\quad \frac{1}{1+\eta_{k}}\beta_{k} \leq \beta_{k+1}\leq (1+\eta_{k})\beta_{k},\quad \forall~ k\geq 0,
		\end{equation*}
		where $\{\eta_{k}\}_{0}^{\infty}$ is a non-negative sequence and $\sum_{k=0}^{\infty}\eta_{k} < +\infty$.
	\end{assumption}
	
	\begin{assumption}\label{assume:criterion parameter}
		The sequence of criterion parameters $\{\theta_{k}\}_{0}^{\infty}$ satisfies
		\begin{equation*}
			\sum_{k=0}^{\infty}\theta_{k} < +\infty .
		\end{equation*}
	\end{assumption}
	
	Note that under Assumption~\ref{assume:augmented parameter} the condition $ \sum_{k=0}^{\infty}\eta_{k} < +\infty$ implies that $\prod_{k=1}^{\infty}(1+\eta_{k})<+\infty$,  and hence that the sequence ${\beta_k}$ is uniformly bounded. More precisely, we have
	\begin{equation*}
		0<\underline{\beta}=\left( \prod_{k=1}^{\infty}(1+\eta_{k}) \right)^{-1}\beta_{0}\leq \beta_{k}\leq \left( \prod_{k=1}^{\infty}(1+\eta_{k}) \right) \beta_{0}=\overline{\beta}.
	\end{equation*}
	
	Next, we state some properties of the functional $J$ in \eqref{eq:reformulated model problem}.
	Let $DJ(u)$ denote the Fr\'{e}chet gradient of $J$ at $u$. Then  the gradient operator $DJ$ is Lipschitz continuous and there exists a constant $L = 1 + \gamma_d \|\bar{S}^*\bar{S}\|$ such that
	\begin{equation}\label{eq:L-Lip}
		\|DJ(u_{1})-DJ(u_{2})\|\leq L\|u_{1}-u_{2}\|,\quad \forall~u_{1},u_{2}\in U.
	\end{equation}
	As a standard result (see, e.g., \cite{nesterov2004introductory}),  the following inequality holds:
	\begin{equation}\label{eq:descent of Lip}
		J(u_{1})\geq J(u_{2})+\langle DJ(u_{2}),u_{1}-u_{2}\rangle+\dfrac{1}{2L}\|DJ(u_{1})-DJ(u_{2})\|^{2}, \quad \forall~u_{1},u_{2}\in U.
	\end{equation}
	Moreover, the functional $J$ is also strongly convex and the gradient operator $DJ$ is strongly monotone, that is,
	\begin{equation}\label{eq:strongly monotone}
		\langle DJ(u_{1})-DJ(u_{2}), u_{1}-u_{2}\rangle \geq  \|u_{1}-u_{2}\|^{2},
		\quad\forall~ u_{1},u_{2}\in U,
	\end{equation}
	with modulus $1$. Consequently, it holds that
	\begin{equation}\label{eq:desent of strong con}
		J(u_{1})\geq J(u_{2})+\langle DJ(u_{2}),u_{1}-u_{2}\rangle+\dfrac{1}{2}\|u_{1}-u_{2}\|^{2}, \quad \forall~u_{1},u_{2}\in U.
	\end{equation}

	To simplify the presentation, we use $\omega$ for $(u,z,\lambda)^{\top}$, $\upsilon$ for $(z,\lambda)^{\top}$, and denote $W$ and $V$ as $U\times \mathcal{C} \times \Lambda$ and $\mathcal{C}\times \Lambda$, respectively. For any $v \in V$, we define $M_{k}$-induced norm associated with the matrix
	\begin{equation*}\label{Matrix:theo3}
		M_{k} =
		\begin{pmatrix}
			\beta_{k}I &0\\ 0& \beta_{k}^{-1}I
		\end{pmatrix},
	\end{equation*}
	by
	\begin{equation}\label{eq:induced norm}
		\|v\|_{M_{k}} = \langle v,M_{k} v \rangle^{\frac{1}{2}} = \left( \beta_{k}\|z\|^{2} + \beta_{k}^{-1}\|\lambda\|^{2}\right)^{\frac{1}{2}} .
	\end{equation}

	By the definition of the augmented Lagrangian functional (\ref{eq:augmented Lagrangian functional}), a point $\omega^{\ast} = (u^{\ast},z^{\ast},\lambda^{\ast})^{\top}\in W$ is an optimal solution to problem (\ref{eq:reformulated model problem}) if and only if it satisfies the following variational inequality \cite{lin2022alternating}:
	\begin{equation}\label{eq:variational inequality}
		J(u)+R(z)-J(u^{\ast})-R(z^{\ast})-\langle \lambda^{\ast} ,u-z\rangle \geq 0,
		\quad \forall~\omega\in W.
	\end{equation}
	Recall that in the classic ADMM \eqref{eq:classic ADMM-u}-\eqref{eq:classic ADMM-lambda}, all subproblems are assumed to be solved exactly. Hence, we obtain from the optimality conditions for $\hat{\omega}^{k+1} = (\hat{u}^{k+1},\hat{z}^{k+1},\hat{\lambda}^{k+1})^{\top}$ that
	\begin{subnumcases}
		{}
		DJ(\hat{u}^{k+1})-(\hat{\lambda}^{k}-\beta_{k}(\hat{u}^{k+1}-\hat{z}^{k})) =0,
		\label{eq: OptCondition of Classic ADMM-u}\\[1mm]
		R(z)-R(\hat{z}^{k+1})+\left\langle z-\hat{z}^{k+1},\hat{\lambda}^{k}-\beta_{k}(\hat{u}^{k+1}-\hat{z}^{k+1})\right\rangle\geq 0,
		\quad \forall~ z\in U,
		\label{eq: OptCondition of Classic ADMM-z}\\[1mm]
		\hat{\lambda}^{k+1}=\hat{\lambda}^{k}-\beta_{k}(\hat{u}^{k+1}-\hat{z}^{k+1}).\label{eq: OptCondition of Classic ADMM-lambda}
	\end{subnumcases}	
	We denote by $\tilde{\omega}^{k+1} = (\tilde{u}^{k+1}, \tilde{z}^{k+1}, \tilde{\lambda}^{k+1})^{\top}$ the auxiliary solution, which is the exact solution obtained at $k$-th by updating the inexact solution $\omega^{k}$ according to the classic ADMM  (\ref{eq: OptCondition of Classic ADMM-u})-(\ref{eq: OptCondition of Classic ADMM-lambda}). Clearly, for the auxiliary ``exact solution''  $\tilde{\omega}^{k+1}$, the optimality conditions are given by
	\begin{subnumcases}
		{}
		DJ(\tilde{u}^{k+1})-(\lambda^{k}-\beta_{k}(\tilde{u}^{k+1}-z^{k})) =0,
		\label{eq: OptCondition of auxiliary ADMM-u}\\[1mm]
		R(z)-R(\tilde{z}^{k+1})+\left\langle z-\tilde{z}^{k+1},\tilde{\lambda}^{k}-\beta_{k}(\tilde{u}^{k+1}-\tilde{z}^{k+1})\right\rangle\geq 0, \quad \forall~ z\in U,
		\label{eq: OptCondition of auxiliary ADMM-z}\\[1mm]
		\tilde{\lambda}^{k+1}=\lambda^{k}-\beta_{k}(\tilde{u}^{k+1}-\tilde{z}^{k+1}).
		\label{eq: OptCondition of auxiliary ADMM-lambda}
	\end{subnumcases}
	In addition, we have the following relation between the auxiliary solution and the inexact solution.
	
	\begin{proposition}\label{lem:inexact criterion}
		Let $\omega^{k}$ be the sequence generated by the inexact ADMM framework \eqref{eq:inexact ADMM-u}-\eqref{eq:inexact ADMM-lambda}, and $\tilde{\omega}^{k+1}$ be the corresponding auxiliary solution satisfying \eqref{eq: OptCondition of auxiliary ADMM-u}-\eqref{eq: OptCondition of auxiliary ADMM-lambda}. Then, for all $k \geq 0$, we have
		\begin{equation*}\label{eq:inexact criterion 0}
			\| u^{k+1}-\tilde{u}^{k+1}\| \leq \theta_{k}.
		\end{equation*}
	\end{proposition}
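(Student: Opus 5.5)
The plan is to compare the residual map $\sigma_k$ at the inexact iterate $u^{k+1}$ and at the auxiliary solution $\tilde{u}^{k+1}$, and to use the strong monotonicity of $DJ$ to turn the residual bound into a bound on the iterates. First, rewrite $\sigma_k$ in terms of the gradient of $J$. Since $DJ(u) = u + \gamma_d \bar{S}^{\ast}(S(u)-y_d)$, the definition \eqref{eq:criterion function} (with $\beta$ replaced by $\beta_k$, as in the framework) reads
\begin{equation*}
\sigma_k(u) = DJ(u) + \beta_k u - (\beta_k z^k + \lambda^k).
\end{equation*}
The optimality condition \eqref{eq: OptCondition of auxiliary ADMM-u} for $\tilde{u}^{k+1}$ is then exactly $\sigma_k(\tilde{u}^{k+1}) = 0$. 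Subtracting this from $\sigma_k(u^{k+1})$ removes the data term $\beta_k z^k + \lambda^k$:
\begin{equation*}
\sigma_k(u^{k+1}) = \sigma_k(u^{k+1}) - \sigma_k(\tilde{u}^{k+1}) = DJ(u^{k+1}) - DJ(\tilde{u}^{k+1}) + \beta_k (u^{k+1}-\tilde{u}^{k+1}).
\end{equation*}

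Next, take the inner product of this identity with $u^{k+1}-\tilde{u}^{k+1}$. Applying the strong monotonicity \eqref{eq:strongly monotone} gives
\begin{equation*}
\langle \sigma_k(u^{k+1}), u^{k+1}-\tilde{u}^{k+1}\rangle \geq (1+\beta_k)\|u^{k+1}-\tilde{u}^{k+1}\|^2 .
\end{equation*}
Cauchy--Schwarz bounds the left side by $\|\sigma_k(u^{k+1})\|\,\|u^{k+1}-\tilde{u}^{k+1}\|$. If $u^{k+1}=\tilde{u}^{k+1}$ there is nothing to prove. Otherwise, dividing through yields
\begin{equation*}
\|u^{k+1}-\tilde{u}^{k+1}\| \leq \frac{1}{1+\beta_k}\|\sigma_k(u^{k+1})\| \leq \frac{\theta_k}{1+\beta_k} \leq \theta_k ,
\end{equation*}
where the middle step uses $u^{k+1}\in\Theta_k$, i.e. \eqref{eq:inexact criterion}, and the last step uses $\beta_k>0$.

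The only delicate point is bookkeeping rather than analysis. One has to confirm that $\sigma_k$ and the auxiliary condition \eqref{eq: OptCondition of auxiliary ADMM-u} use the same $\beta_k$ and the same pair $(z^k,\lambda^k)$, so that the data terms cancel exactly. One also has to confirm that $DJ(u) = u+\gamma_d\bar{S}^{\ast}(S(u)-y_d)$. This follows from differentiating $J$, using $S(u)=\bar{S}u+S(0)$, so the linear part of $S$ produces the adjoint $\bar{S}^{\ast}$. Beyond that, the argument is a direct consequence of strong monotonicity, and it in fact gives the sharper bound $\theta_k/(1+\beta_k)$.
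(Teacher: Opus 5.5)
Your proof is correct and follows the same route as the paper. Both arguments rest on the strong monotonicity of $\sigma_k$ together with $\sigma_k(\tilde{u}^{k+1})=0$. The paper closes with the three-point identity to get $\|\sigma_k(u^{k+1})\|^2\geq(2\mu_k-1)\|u^{k+1}-\tilde{u}^{k+1}\|^2$, whereas you use Cauchy--Schwarz directly with the explicit modulus $1+\beta_k$, which gives the slightly sharper bound $\theta_k/(1+\beta_k)$ in place of $\theta_k/\sqrt{1+2\beta_k}$.
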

	
	\begin{proof}
		From the definition \eqref{eq:criterion function} and the positivity of $\beta_{k}$, it is easy to check that $\sigma_{k}$ is strongly monotone with modulus $\mu_{k}\geq 1$.
		
		For any $u^{k+1},~\tilde{u}^{k+1}\in U$, using the identity
		\begin{equation}\label{eq:cos}
			\begin{aligned}
				\left\langle b-a, a-c\right\rangle = \frac{1}{2}(\| b-c\|^{2}-\|a-c\|^{2}-\|b-a\|^{2}),
			\end{aligned}
		\end{equation}
		it holds that
		\begin{equation*}
			\begin{aligned}
				2\langle u^{k+1}-\tilde{u}^{k+1},\sigma_{k}(u^{k+1})\rangle
				=\| u^{k+1}-\tilde{u}^{k+1}\|^{2}+\| \sigma_{k}(u^{k+1})\|^{2}
				-\| u^{k+1}-\sigma_{k}(u^{k+1})-\tilde{u}^{k+1}\|^{2}.
			\end{aligned}
		\end{equation*}
		Based on the above equation, the fact that $\sigma_{k}(\tilde{u}^{k+1})=0$, and the strongly monotone property of $\sigma_{k}$, we have
		\begin{equation}\label{eq:inexact criterion 1}
			\begin{aligned}
				\| \sigma_{k}(u^{k+1})\|^{2}
				\geq& 2\langle u^{k+1}-\tilde{u}^{k+1},\sigma_{k}(u^{k+1})\rangle-\| u^{k+1}-\tilde{u}^{k+1}\|^{2}\\
				=&2\langle u^{k+1}-\tilde{u}^{k+1},\sigma_{k}(u^{k+1})-\sigma_{k}(\tilde{u}^{k+1})\rangle-\| u^{k+1}-\tilde{u}^{k+1}\|^{2}\\
				\geq&(2\mu_{k}-1)\| u^{k+1}-\tilde{u}^{k+1}\|^{2}.
			\end{aligned}
		\end{equation}
		Since the modulus $\mu_{k}\geq 1$, applying (\ref{eq:inexact criterion}), the proof is completed.
	\end{proof}
	It is important to emphasize that the auxiliary solution ${\tilde{\omega}^{k}}$ is introduced solely for convergence analysis and is not required in practical computations. While Proposition \ref{lem:inexact criterion}  provides a bound on the inexactness of $u^{k}$, a completed  convergence proof for the inexact ADMM (\ref{eq:inexact ADMM-u})-(\ref{eq:inexact ADMM-lambda}) requires  additional properties of the exact ADMM sequence ${\hat{\omega}^{k}}$ generated by (\ref{eq: OptCondition of Classic ADMM-u})-(\ref{eq: OptCondition of Classic ADMM-lambda}).

	\subsection{Properties of exact framework (\ref{eq: OptCondition of Classic ADMM-u})-(\ref{eq: OptCondition of Classic ADMM-lambda})}\label{sec:Properties of exact framework}
	
	This subsection presents some crucial properties of the exact ADMM iteration (\ref{eq: OptCondition of Classic ADMM-u})–(\ref{eq: OptCondition of Classic ADMM-lambda}), which will be used to derive the convergence of the proposed inexact framework.

	\begin{lemma}\label{lem:estimate for KKT}
		Let $\{\hat{\omega}^{k}\}$  be the sequence generated by (\ref{eq: OptCondition of Classic ADMM-u})-(\ref{eq: OptCondition of Classic ADMM-lambda}). Then for any $k\geq 0$, it holds that
		\begin{equation}\label{eq:estimate for KKT}
			\begin{aligned}
				& \left \langle DJ(\hat{u}^{k+1}),\hat{u}^{k+1}-u^{\ast}\right\rangle+\left\langle \partial R(\hat{z}^{k+1}),\hat{z}^{k+1}-z^{\ast} \right \rangle -\left\langle \lambda^{\ast},\hat{u}^{k+1}-\hat{z}^{k+1}\right\rangle\\
				\leq & \,\frac{1}{2}(\| \hat{v}^{k}-v^{\ast}\|_{M_{k}}^{2}-\| \hat{v}^{k+1}-v^{\ast}\|_{M_{k}}^{2}-\| \hat{v}^{k+1}-\hat{v}^{k}\|_{M_{k}}^{2}),
			\end{aligned}
		\end{equation}
		where $\partial R$ is the subdifferential of the functional $R$.
	\end{lemma}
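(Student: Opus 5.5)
From the optimality conditions of the exact iteration I will extract explicit expressions for the two "subgradient" terms on the left-hand side. Then I will reduce everything to inner products of successive differences, which the identity \eqref{eq:cos} converts into the differences of $M_k$-norms.

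\textbf{Step 1: expressions for the subgradients.} Combining \eqref{eq: OptCondition of Classic ADMM-u} with \eqref{eq: OptCondition of Classic ADMM-lambda}, I write
\[
DJ(\hat u^{k+1}) = \hat\lambda^{k+1} - \beta_k(\hat z^{k+1}-\hat z^{k}).
\]
Similarly, \eqref{eq: OptCondition of Classic ADMM-z} together with \eqref{eq: OptCondition of Classic ADMM-lambda} says that $-\hat\lambda^{k+1}\in\partial R(\hat z^{k+1})$. I interpret $\partial R(\hat z^{k+1})$ in the statement as this particular subgradient. Strictly, the variational inequality only yields $R(z^*)-R(\hat z^{k+1})\ge\langle \hat\lambda^{k+1},\hat z^{k+1}-z^*\rangle$, and that weaker form is all that is used later.

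\textbf{Step 2: substitution and cancellation.} Inserting these into the left-hand side gives
\[
\langle \hat\lambda^{k+1}-\lambda^*,\hat u^{k+1}-\hat z^{k+1}\rangle -\langle\hat\lambda^{k+1},u^*-z^*\rangle-\beta_k\langle \hat z^{k+1}-\hat z^{k},\hat u^{k+1}-u^*\rangle .
\]
I will use three facts here: $u^*=z^*$; $\hat u^{k+1}-\hat z^{k+1}=\beta_k^{-1}(\hat\lambda^k-\hat\lambda^{k+1})$; and $\hat u^{k+1}-u^*=(\hat z^{k+1}-z^*)+\beta_k^{-1}(\hat\lambda^k-\hat\lambda^{k+1})$. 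With these the expression becomes
\[
\beta_k^{-1}\langle\hat\lambda^{k+1}-\lambda^*,\hat\lambda^k-\hat\lambda^{k+1}\rangle+\beta_k\langle \hat z^{k}-\hat z^{k+1},\hat z^{k+1}-z^*\rangle+\langle \hat z^{k}-\hat z^{k+1},\hat\lambda^k-\hat\lambda^{k+1}\rangle .
\]

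\textbf{Step 3: the norm identity.} Applying \eqref{eq:cos} with $b=\hat v^k$, $a=\hat v^{k+1}$, $c=v^*$ componentwise, weighted by $M_k$, turns the first two terms into exactly
\[
\tfrac12\bigl(\|\hat v^k-v^*\|_{M_k}^2-\|\hat v^{k+1}-v^*\|_{M_k}^2-\|\hat v^{k+1}-\hat v^k\|_{M_k}^2\bigr).
\]

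\textbf{Step 4: the cross term (main obstacle).} What remains is to show that the cross term $\langle \hat z^{k}-\hat z^{k+1},\hat\lambda^k-\hat\lambda^{k+1}\rangle$ is nonpositive. I will use monotonicity of $\partial R$. Since $-\hat\lambda^{k+1}\in\partial R(\hat z^{k+1})$ and $-\hat\lambda^{k}\in\partial R(\hat z^{k})$, monotonicity gives $\langle \hat z^{k+1}-\hat z^k,\hat\lambda^{k}-\hat\lambda^{k+1}\rangle\ge0$, so the cross term is $\le 0$. For $k\ge1$ this is obtained by adding the two variational inequalities \eqref{eq: OptCondition of Classic ADMM-z} at steps $k$ and $k-1$, tested at $\hat z^{k}$ and $\hat z^{k+1}$ respectively. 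For $k=0$ I would need $\hat\lambda^0$ to be compatible with $\hat z^0$, i.e.\ $-\hat\lambda^0\in\partial R(\hat z^0)$. I expect to handle this by an initialization assumption, or by starting the statement's indexing after one iteration. Dropping the nonpositive term then yields the claimed inequality.
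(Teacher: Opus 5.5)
Your proposal is correct and follows the paper's proof essentially line by line. It uses the same substitutions $DJ(\hat u^{k+1})=\hat\lambda^{k+1}-\beta_k(\hat z^{k+1}-\hat z^{k})$ and $-\hat\lambda^{k+1}\in\partial R(\hat z^{k+1})$, the same three-term decomposition, identity \eqref{eq:cos} for the $\lambda$- and $z$-terms, and monotonicity of $\partial R$ to drop $\langle \hat z^{k}-\hat z^{k+1},\hat\lambda^{k}-\hat\lambda^{k+1}\rangle$.

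Your caveat at $k=0$ is well founded, and the paper passes over it. Monotonicity there requires $-\hat\lambda^{0}\in\partial R(\hat z^{0})$. Since your decomposition is an exact identity, an incompatible initialization can make the inequality genuinely fail at $k=0$. The lemma therefore holds for all $k\ge 1$, or for $k\ge 0$ under a compatible start such as $\hat z^{0}=\hat\lambda^{0}=0$ with $0\in\mathcal{C}$.
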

	\begin{proof}
		Substituting \eqref{eq: OptCondition of Classic ADMM-lambda} into \eqref{eq: OptCondition of Classic ADMM-u} and the optimality condition of \eqref{eq: OptCondition of Classic ADMM-z}, respectively, we obtain
		\begin{equation}\label{eq:desent of exact 1}
			DJ(\hat{u}^{k+1})=\hat{\lambda}^{k+1}-\beta_{k}(\hat{z}^{k+1}-\hat{z}^{k})\quad \mbox{and} \quad -\hat{\lambda}^{k+1} \in \partial R(\hat{z}^{k+1}).
		\end{equation}
		Further, using the relations (\ref{eq:desent of exact 1}), the fact $u^{\ast} = z^{\ast}$ and the equation (\ref{eq: OptCondition of Classic ADMM-lambda}) in order,
		we derive
		\begin{equation}\label{eq:desent of exact 2}
			\begin{aligned}
				&\left\langle DJ(\hat{u}^{k+1}),\hat{u}^{k+1}-u^{\ast}\rangle+ \langle \partial R(\hat{z}^{k+1}),\hat{z}^{k+1}-z^{\ast} \right \rangle
				-\langle\lambda^{\ast},\hat{u}^{k+1}-\hat{z}^{k+1}\rangle \\
				=
				&\langle\hat{\lambda}^{k+1},\hat{u}^{k+1}-\hat{z}^{k+1}\rangle
				-\beta_{k}\langle\hat{z}^{k+1}-\hat{z}^{k},\hat{u}^{k+1}-\hat{z}^{k+1}+\hat{z}^{k+1}-z^{\ast}\rangle
				-\langle\lambda^{\ast},\hat{u}^{k+1}-\hat{z}^{k+1}\rangle\\
				=
				&\beta_{k}^{-1}\langle\lambda^{\ast}-\hat{\lambda}^{k+1},\hat{\lambda}^{k+1}-\hat{\lambda}^{k}\rangle
				+\langle\hat{z}^{k+1}-\hat{z}^{k},\hat{\lambda}^{k+1}-\hat{\lambda}^{k}\rangle+\beta_{k}\langle \hat{z}^{k}-\hat{z}^{k+1},\hat{z}^{k+1}-z^{\ast}\rangle.
			\end{aligned}
		\end{equation}
		
		By the monotonicity of the subdifferential of a convex functional, the second term on the right-hand side of \eqref{eq:desent of exact 2} is nonpositive. Applying the identity \eqref{eq:cos} to the remaining two inner products in \eqref{eq:desent of exact 2}, we obtain the inequality
		\begin{equation}\label{eq:desent of auxiliary and inexact 4}
			\begin{aligned}
				& \langle DJ(\hat{u}^{k+1}),\hat{u}^{k+1}-u^{\ast}\rangle+\left \langle \partial R(\hat{z}^{k+1}),\hat{z}^{k+1}-z^{\ast} \right \rangle
				-\langle\lambda^{\ast},\hat{u}^{k+1}-\hat{z}^{k+1}\rangle \\
				\leq
				& \frac{\beta_{k}^{-1}}{2} \| \lambda^{\ast}-\hat{\lambda}^{k}\|^{2}-\frac{\beta_{k}^{-1}}{2} \| \lambda^{\ast}-\hat{\lambda}^{k+1}\|^{2}-\frac{\beta_{k}^{-1}}{2} \| \hat{\lambda}^{k+1}-\hat{\lambda}^{k}\|^{2}\\
				&+\frac{\beta_{k}}{2} \| z^{\ast}-\hat{z}^{k}\|^{2}-\frac{\beta_{k}}{2} \| z^{\ast}-\hat{z}^{k+1}\|^{2}-\frac{\beta_{k}}{2} \| \hat{z}^{k+1}-\hat{z}^{k}\|^{2}\\
				=
				&\frac{1}{2}(\| \hat{v}^{k}-v^{\ast}\|_{M_{k}}^{2}-\| \hat{v}^{k+1}-v^{\ast}\|_{M_{k}}^{2}-\| \hat{v}^{k+1}-\hat{v}^{k}\|_{M_{k}}^{2}).
			\end{aligned}
		\end{equation}
		This completes the proof of Lemma \ref{lem:estimate for KKT}.
	\end{proof}

	\begin{lemma}\label{lem: exact KKT est}
		Let $\{\hat{\omega}^{k}\}$ be the sequence generated by (\ref{eq: OptCondition of Classic ADMM-u})-(\ref{eq: OptCondition of Classic ADMM-lambda}). Then for any $k\geq 0$, it holds that
		\begin{equation}\label{eq: desent of KKT strong con }
			\begin{aligned}
				&J(\hat{u}^{k+1})+R(\hat{z}^{k+1})-J(u^{\ast})-R(z^{\ast})-\langle \lambda^{\ast} ,\hat{u}^{k+1}-\hat{z}^{k+1}\rangle\\
				\leq
				&\frac{1}{2}(\| \hat{v}^{k}-v^{\ast}\|_{M_{k}}^{2}-\| \hat{v}^{k+1}-v^{\ast}\|_{M_{k}}^{2}-\| \hat{v}^{k+1}-\hat{v}^{k}\|_{M_{k}}^{2}-\|\hat{u}^{k+1}-u^{\ast}\|^{2})
			\end{aligned}
		\end{equation}
		and
		\begin{equation}\label{eq: desent of KKT L-Lip}
			\begin{aligned}
				&J(\hat{u}^{k+1})+R(\hat{z}^{k+1})-J(u^{\ast})-R(z^{\ast})-\langle \lambda^{\ast} ,\hat{u}^{k+1}-\hat{z}^{k+1}\rangle\\
				\leq
				&\frac{1}{2}(\| \hat{v}^{k}-v^{\ast}\|_{M_{k}}^{2}-\| \hat{v}^{k+1}-v^{\ast}\|_{M_{k}}^{2}-\| \hat{v}^{k+1}-\hat{v}^{k}\|_{M_{k}}^{2}-\dfrac{1}{L}\|DJ(\hat{u}^{k+1})-DJ(u^{\ast})\|^{2}).
			\end{aligned}
		\end{equation}				
	\end{lemma}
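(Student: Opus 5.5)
The plan is to reduce both inequalities to Lemma~\ref{lem:estimate for KKT} by bounding the function-value gap on the left-hand side from above by the inner-product expression appearing there, with an extra nonpositive term.

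First we treat the $R$-part. From \eqref{eq:desent of exact 1} we have $-\hat{\lambda}^{k+1}\in\partial R(\hat{z}^{k+1})$, and this is the subgradient we fix in Lemma~\ref{lem:estimate for KKT}. Convexity of $R$ gives
\[
R(\hat{z}^{k+1})-R(z^{\ast})\leq \langle -\hat{\lambda}^{k+1},\hat{z}^{k+1}-z^{\ast}\rangle .
\]

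Next we treat the $J$-part in two ways. For \eqref{eq: desent of KKT strong con }, apply \eqref{eq:desent of strong con} with $u_1=u^{\ast}$ and $u_2=\hat{u}^{k+1}$. This yields
\[
J(\hat{u}^{k+1})-J(u^{\ast})\leq \langle DJ(\hat{u}^{k+1}),\hat{u}^{k+1}-u^{\ast}\rangle-\tfrac12\|\hat{u}^{k+1}-u^{\ast}\|^{2}.
\]
For \eqref{eq: desent of KKT L-Lip}, apply \eqref{eq:descent of Lip} with the same choice $u_1=u^{\ast}$, $u_2=\hat{u}^{k+1}$. This gives the analogous bound with $-\tfrac{1}{2L}\|DJ(\hat{u}^{k+1})-DJ(u^{\ast})\|^{2}$ in place of the strong-convexity term.

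We then add the $J$-bound and the $R$-bound and subtract $\langle\lambda^{\ast},\hat{u}^{k+1}-\hat{z}^{k+1}\rangle$ from both sides. The result is exactly the left-hand side of \eqref{eq:estimate for KKT} plus the corresponding negative term. Invoking Lemma~\ref{lem:estimate for KKT} then gives the right-hand sides. The factor $\tfrac12$ pulls out in front of $\|\hat{u}^{k+1}-u^{\ast}\|^{2}$, respectively of $\tfrac1L\|DJ(\hat{u}^{k+1})-DJ(u^{\ast})\|^{2}$, matching the stated form.

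The only delicate point is that Lemma~\ref{lem:estimate for KKT} is stated with a set-valued $\partial R(\hat{z}^{k+1})$. We must check that the proof of that lemma uses precisely the element $-\hat{\lambda}^{k+1}$; it does, through \eqref{eq:desent of exact 1}. The monotonicity step in that proof also pairs $-\hat{\lambda}^{k+1}$ with $-\hat{\lambda}^{k}\in\partial R(\hat{z}^{k})$, which requires $k\geq1$. For $k=0$ one can assume the initial $\hat\lambda^0$ satisfies $-\hat\lambda^0\in\partial R(\hat z^0)$, as is standard. With these checks the argument is purely a combination of convexity inequalities and requires no further estimates.
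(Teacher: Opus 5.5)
Your proposal is correct and matches the paper's proof: you bound $J(\hat u^{k+1})-J(u^\ast)$ via \eqref{eq:desent of strong con} (resp.\ \eqref{eq:descent of Lip}) with $u_1=u^\ast$, $u_2=\hat u^{k+1}$, bound the $R$-gap by convexity using the subgradient $-\hat\lambda^{k+1}\in\partial R(\hat z^{k+1})$, and then invoke Lemma~\ref{lem:estimate for KKT}. Your caveat is also valid: for $k=0$ the monotonicity step inherited from Lemma~\ref{lem:estimate for KKT} needs $-\hat\lambda^{0}\in\partial R(\hat z^{0})$, which the paper leaves implicit; it does hold, for instance, for the zero initial guess used in the experiments, since there $a\le 0\le b$.
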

	\begin{proof}
		Combining the inequality (\ref{eq:desent of strong con}), the convexity of $ R(z)$ and the inequality (\ref{eq:estimate for KKT}), we have
		\begin{equation*}
			\begin{aligned}
				&J(\hat{u}^{k+1})+R(\hat{z}^{k+1})-J(u^{\ast})-R(z^{\ast})-\langle \lambda^{\ast} ,\hat{u}^{k+1}-\hat{z}^{k+1}\rangle\\
				\leq
				&\left \langle DJ(\hat{u}^{k+1}),\hat{u}^{k+1}-u^{\ast}\right\rangle+\left \langle \partial R(\hat{z}^{k+1}),\hat{z}^{k+1}-z^{\ast} \right \rangle -\left\langle \lambda^{\ast},\hat{u}^{k+1}-\hat{z}^{k+1}\right\rangle-\dfrac{1}{2}\| \hat{u}^{k+1}-u^{\ast}\|^{2}\\
				\leq
				&\frac{1}{2}(\| \hat{v}^{k}-v^{\ast}\|_{M_{k}}^{2}-\| \hat{v}^{k+1}-v^{\ast}\|_{M_{k}}^{2}-\| \hat{v}^{k+1}-\hat{v}^{k}\|_{M_{k}}^{2}-\|\hat{u}^{k+1}-u^{\ast}\|^{2}).
			\end{aligned}
		\end{equation*}
		Similarly, we can obtain (\ref{eq: desent of KKT L-Lip}) from (\ref{eq:descent of Lip}).
	\end{proof}
	
	\begin{theorem}\label{thm: exact Convergence rate}
		Let $\{\hat{\omega}^{k}\}$  be the sequence generated by (\ref{eq: OptCondition of Classic ADMM-u})-(\ref{eq: OptCondition of Classic ADMM-lambda}). Under Assumption 1, there exists $  0<q_{1}<1$, such that
		\begin{equation*}
			\|\hat{v}^{k+1}-v^{\ast}\|_{M_{k}}\leq q_{1} \|\hat{v}^{k}-v^{\ast}\|_{M_{k}}.
		\end{equation*}
	\end{theorem}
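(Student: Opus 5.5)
The plan is to turn the two estimates of Lemma~\ref{lem: exact KKT est} into a contraction by bounding $\|\hat{v}^{k+1}-v^{\ast}\|_{M_{k}}^{2}$ from above by the quantities they dissipate. First, taking $\omega=\hat{\omega}^{k+1}$ in the variational inequality \eqref{eq:variational inequality} shows that the common left-hand side of \eqref{eq: desent of KKT strong con } and \eqref{eq: desent of KKT L-Lip} is nonnegative. Averaging the two inequalities then gives
\begin{equation*}
\|\hat{v}^{k}-v^{\ast}\|_{M_{k}}^{2}-\|\hat{v}^{k+1}-v^{\ast}\|_{M_{k}}^{2}
\geq \|\hat{v}^{k+1}-\hat{v}^{k}\|_{M_{k}}^{2}+\frac{1}{2}\|\hat{u}^{k+1}-u^{\ast}\|^{2}+\frac{1}{2L}\|DJ(\hat{u}^{k+1})-DJ(u^{\ast})\|^{2}.
\end{equation*}
Call the right-hand side $\Delta_{k}$.

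The core step is to show $\|\hat{v}^{k+1}-v^{\ast}\|_{M_{k}}^{2}\leq C\,\Delta_{k}$ with a constant $C$ independent of $k$. I treat the two components separately, using the KKT relations $DJ(u^{\ast})=\lambda^{\ast}$, $-\lambda^{\ast}\in\partial R(z^{\ast})$ and $u^{\ast}=z^{\ast}$.

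For the $z$-component, \eqref{eq: OptCondition of Classic ADMM-lambda} gives
\[
\hat{z}^{k+1}-z^{\ast}=(\hat{u}^{k+1}-u^{\ast})+\beta_{k}^{-1}(\hat{\lambda}^{k+1}-\hat{\lambda}^{k}),
\]
so that
\[
\beta_{k}\|\hat{z}^{k+1}-z^{\ast}\|^{2}\leq 2\beta_{k}\|\hat{u}^{k+1}-u^{\ast}\|^{2}+2\beta_{k}^{-1}\|\hat{\lambda}^{k+1}-\hat{\lambda}^{k}\|^{2}.
\]

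For the $\lambda$-component, the first relation in \eqref{eq:desent of exact 1} gives
\[
\hat{\lambda}^{k+1}-\lambda^{\ast}=DJ(\hat{u}^{k+1})-DJ(u^{\ast})+\beta_{k}(\hat{z}^{k+1}-\hat{z}^{k}),
\]
so that
\[
\beta_{k}^{-1}\|\hat{\lambda}^{k+1}-\lambda^{\ast}\|^{2}\leq 2\beta_{k}^{-1}\|DJ(\hat{u}^{k+1})-DJ(u^{\ast})\|^{2}+2\beta_{k}\|\hat{z}^{k+1}-\hat{z}^{k}\|^{2}.
\]

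Adding the two component bounds gives $\|\hat{v}^{k+1}-v^{\ast}\|_{M_{k}}^{2}\leq C\Delta_{k}$ with
\[
C=\max\{2,\;4\overline{\beta},\;4L\underline{\beta}^{-1}\}.
\]

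The main obstacle is keeping $C$ uniform in $k$. Both $\beta_{k}$ and $\beta_{k}^{-1}$ appear as weights, so I will invoke Assumption~\ref{assume:augmented parameter} through the uniform bounds $\underline{\beta}\leq\beta_{k}\leq\overline{\beta}$ established after the assumptions. This is the only place the assumption is needed. It also explains why the gradient term from \eqref{eq: desent of KKT L-Lip} is indispensable: strong convexity alone controls $\hat{u}$ but not the multiplier error.

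Finally, combining the two bounds yields
\[
(1+C^{-1})\|\hat{v}^{k+1}-v^{\ast}\|_{M_{k}}^{2}\leq\|\hat{v}^{k}-v^{\ast}\|_{M_{k}}^{2}.
\]
Hence the claim holds with $q_{1}=(1+C^{-1})^{-1/2}\in(0,1)$, which is independent of $k$.
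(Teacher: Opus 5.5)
Your proof is correct and uses the same ingredients as the paper: nonnegativity of the left-hand sides in Lemma~\ref{lem: exact KKT est} via \eqref{eq:variational inequality}, the identities $\hat{z}^{k+1}-z^{\ast}=(\hat{u}^{k+1}-u^{\ast})+\beta_k^{-1}(\hat{\lambda}^{k+1}-\hat{\lambda}^{k})$ and $\hat{\lambda}^{k+1}-\lambda^{\ast}=DJ(\hat{u}^{k+1})-DJ(u^{\ast})+\beta_k(\hat{z}^{k+1}-\hat{z}^{k})$, and the bounds $\underline{\beta}\leq\beta_k\leq\overline{\beta}$. The only difference is bookkeeping. The paper applies Young's inequality with tuned $\epsilon_1,\epsilon_2$ inside each inequality separately, absorbing the cross terms into $-\tfrac{1}{2}\|\hat{v}^{k+1}-\hat{v}^{k}\|_{M_k}^{2}$, and then takes an optimized convex combination with weight $t_k$; you instead average with weight $\tfrac{1}{2}$ first and prove the error bound $\|\hat{v}^{k+1}-v^{\ast}\|_{M_k}^{2}\leq C\Delta_k$, which is shorter but gives a less optimized constant $q_1$.
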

	
	\begin{proof}
		First, from equation (\ref{eq: OptCondition of Classic ADMM-u}) and the optimality of $\omega^{\ast}$, we derive
		\begin{equation}\label{eq:exact Convergence rate 1}
			\begin{aligned}
				&\|DJ(\hat{u}^{k+1})-DJ(u^{\ast})\|^{2}\\
				=&\,\|\hat{\lambda}^{k+1}-\lambda^{\ast}+\beta_{k}(\hat{z}^{k}-\hat{z}^{k+1})\|^{2}\\
				=&\, \|\hat{\lambda}^{k+1}-\lambda^{\ast}\|^{2}+\beta_{k}^{2}\|\hat{z}^{k}-\hat{z}^{k+1}\|^{2}+2\beta_{k}\langle \hat{\lambda}^{k+1}-\lambda^{\ast},\hat{z}^{k}-\hat{z}^{k+1}\rangle\\
				\geq &\, \|\hat{\lambda}^{k+1}-\lambda^{\ast}\|^{2}+\beta_{k}^{2}\|\hat{z}^{k}-\hat{z}^{k+1}\|^{2}-\epsilon_{1}\|\hat{\lambda}^{k+1}-\lambda^{\ast}\|^{2}-\beta_{k}^2{\epsilon_{1}^{-1}}\|\hat{z}^{k}-\hat{z}^{k+1}\|^{2}\\
				=&\,(1-\epsilon_{1})\|\hat{\lambda}^{k+1}-\lambda^{\ast}\|^{2}+\beta_{k}^{2}(1-{\epsilon_{1}^{-1}})\|\hat{z}^{k}-\hat{z}^{k+1}\|^{2},
			\end{aligned}
		\end{equation}
		where the last line follows from Young's inequality with a parameter $1/(1+L/\bar{\beta})<\epsilon_{1}<1$.
		
		Next, from the multiplier update (\ref{eq: OptCondition of Classic ADMM-lambda}), we obtain a similar estimate for the primal variable:
		\begin{equation}\label{eq:exact Convergence rate 2}
			\begin{aligned}
				&\|\hat{u}^{k+1}-u^{\ast}\|^{2}\\
				=&\,\|\hat{u}^{k+1}-\hat{z}^{k+1}+\hat{z}^{k+1}-z^{\ast}\|^{2}\\
				\geq&\, (1-\epsilon_{2})\|\hat{z}^{k+1}-z^{\ast}\|^{2}+(1-{\epsilon_{2}^{-1}})\|\hat{u}^{k+1}-\hat{z}^{k+1}\|^{2}\\
				=&\,(1-\epsilon_{2})\|\hat{z}^{k+1}-z^{\ast}\|^{2}+{\beta_{k}^{-2}}(1-{\epsilon_{2}^{-1}})\|\hat{\lambda}^{k+1}-\hat{\lambda}^{k}\|^{2},
			\end{aligned}
		\end{equation}
		where $1/(1+\underline{\beta})<\epsilon_{2}<1$.
		
		We now combine the above estimates with the inequalities from Lemma \ref{lem: exact KKT est}. Substituting (\ref{eq:exact Convergence rate 1}) into (\ref{eq: desent of KKT L-Lip}) gives
		\begin{align}
			&J(\hat{u}^{k+1})+R(\hat{z}^{k+1})-J(u^{\ast})-R(z^{\ast})-\langle \lambda^{\ast} ,\hat{u}^{k+1}-\hat{z}^{k+1}\rangle \nonumber \\
			\leq
			&\frac{1}{2}(\| \hat{v}^{k}-v^{\ast}\|_{M_{k}}^{2}-\| \hat{v}^{k+1}-v^{\ast}\|_{M_{k}}^{2})\nonumber \\
			~~&-\frac{1}{2}\| \hat{v}^{k+1}-\hat{v}^{k}\|_{M_{k}}^{2}-\dfrac{(1-\epsilon_{1})}{2L}\|\hat{\lambda}^{k+1}-\lambda^{\ast}\|^{2}-\dfrac{\beta_{k}^{2}(1-{\epsilon_{1}^{-1}})}{2L}\|\hat{z}^{k}-\hat{z}^{k+1}\|^{2} \label{eq:exact Convergence rate 3} \\
			\leq
			&\frac{1}{2}(\| \hat{v}^{k}-v^{\ast}\|_{M_{k}}^{2}-\| \hat{v}^{k+1}-v^{\ast}\|_{M_{k}}^{2}) \nonumber\\
			~~&-\frac{\beta_{k}^{-1}}{2}\| \hat{\lambda}^{k+1}-\hat{\lambda}^{k}\|^{2}-\dfrac{(1-\epsilon_{1})}{2L}\|\hat{\lambda}^{k+1}-\lambda^{\ast}\|^{2}+\left(\dfrac{\beta_{k}^{2}({\epsilon_{1}^{-1}}-1)}{2L}-\dfrac{\beta_{k}}{2}\right)\|\hat{z}^{k}-\hat{z}^{k+1}\|^{2} \nonumber\\
			\leq
			&\frac{1}{2}(\| \hat{v}^{k}-v^{\ast}\|_{M_{k}}^{2}-\| \hat{v}^{k+1}-v^{\ast}\|_{M_{k}}^{2})-\dfrac{(1-\epsilon_{1})}{2L}\|\hat{\lambda}^{k+1}-\lambda^{\ast}\|^{2}. \nonumber
		\end{align}
		Similarly, substituting (\ref{eq:exact Convergence rate 2}) into (\ref{eq: desent of KKT strong con }) yields
		\begin{equation}\label{eq:exact Convergence rate 4}
			\begin{aligned}
				&J(\hat{u}^{k+1})+R(\hat{z}^{k+1})-J(u^{\ast})-R(z^{\ast})-\langle \lambda^{\ast} ,\hat{u}^{k+1}-\hat{z}^{k+1}\rangle\\
				\leq
				&\frac{1}{2}(\| \hat{v}^{k}-v^{\ast}\|_{M_{k}}^{2}-\| \hat{v}^{k+1}-v^{\ast}\|_{M_{k}}^{2})\\
				~~&-\frac{\beta_{k}}{2}\| \hat{z}^{k+1}-\hat{z}^{k}\|^{2}-\dfrac{(1-\epsilon_{2})}{2}\|\hat{z}^{k+1}-z^{\ast}\|^{2}+\left(\dfrac{({\epsilon_{2}^{-1}}-1)\beta_{k}^{-2}}{2}-\dfrac{\beta_{k}^{-1}}{2}\right)\|\hat{\lambda}^{k}-\hat{\lambda}^{k+1}\|^{2}\\
				\leq
				&\frac{1}{2}(\| \hat{v}^{k}-v^{\ast}\|_{M_{k}}^{2}-\| \hat{v}^{k+1}-v^{\ast}\|_{M_{k}}^{2})-\dfrac{(1-\epsilon_{2})}{2}\|\hat{z}^{k+1}-z^{\ast}\|^{2}.
			\end{aligned}
		\end{equation}
		
		Since the left-hand sides of both (\ref{eq:exact Convergence rate 3}) and (\ref{eq:exact Convergence rate 4}) are nonnegative by the inequality (\ref{eq:variational inequality}), we obtain two key inequalities:
		\begin{align}
			&\dfrac{(1-\epsilon_{1})}{2L}\|\hat{\lambda}^{k+1}-\lambda^{\ast}\|^{2} \nonumber\\
			\leq &\, \dfrac{\beta_{k}^{-1}}{2}\|\hat{\lambda}^{k}-\lambda^{\ast}\|^{2}-\dfrac{\beta_{k}^{-1}}{2}\|\hat{\lambda}^{k+1}-\lambda^{\ast}\|^{2}
			+\dfrac{\beta_{k}}{2}\|\hat{z}^{k}-z^{\ast}\|^{2}-\dfrac{\beta_{k}}{2}\|\hat{z}^{k+1}-z^{\ast}\|^{2}, \label{eq:exact Convergence rate 5}\\
			&\dfrac{(1-\epsilon_{2})}{2}\|\hat{z}^{k+1}-z^{\ast}\|^{2} \nonumber \\
			\leq &\, \dfrac{\beta_{k}^{-1}}{2}\|\hat{\lambda}^{k}-\lambda^{\ast}\|^{2}-\dfrac{\beta_{k}^{-1}}{2}\|\hat{\lambda}^{k+1}-\lambda^{\ast}\|^{2}
			+\dfrac{\beta_{k}}{2}\|\hat{z}^{k}-z^{\ast}\|^{2}-\dfrac{\beta_{k}}{2}\|\hat{z}^{k+1}-z^{\ast}\|^{2}.  \label{eq:exact Convergence rate 6}
		\end{align}
		To combine these estimates into a single contraction for the weighted norm $\|\cdot\|_{M_k}$, we take a convex combination of (\ref{eq:exact Convergence rate 5}) and (\ref{eq:exact Convergence rate 6}). Specifically, multiply (\ref{eq:exact Convergence rate 5}) by $t_k$ and (\ref{eq:exact Convergence rate 6}) by $(1-t_k)$, then add them to obtain
		\begin{align*}
			&\left(\dfrac{(1-\epsilon_{1})t_k}{2L}+\dfrac{\beta_{k}^{-1}}{2}\right)\|\hat{\lambda}^{k+1}-\lambda^{\ast}\|^{2}+\left(\dfrac{(1-\epsilon_{2})(1-t_k)}{2}+\dfrac{\beta_{k}}{2}\right)\|\hat{z}^{k+1}-z^{\ast}\|^{2}\\
			\leq
			&\,\dfrac{\beta_{k}^{-1}}{2}\|\hat{\lambda}^{k}-\lambda^{\ast}\|^{2}+\dfrac{\beta_{k}}{2}\|\hat{z}^{k}-z^{\ast}\|^{2}.
		\end{align*}
		For this inequality to imply a contraction $\|\hat{v}^{k+1}-v^{\ast}\|_{M_{k}}\leq q_{1,k} \|\hat{v}^{k}-v^{\ast}\|_{M_{k}}$, we require the coefficients on the left to be exactly $1/(2\beta_k q_{1,k}^2)$ and $\beta_k/(2 q_{1,k}^2)$ respectively. This condition gives the system:
		\begin{equation*}
			\left(\dfrac{(1-\epsilon_{1})t_k}{2L}+\dfrac{\beta_{k}^{-1}}{2}\right)\Big/{\dfrac{\beta_{k}^{-1}}{2}}=\left(\dfrac{(1-\epsilon_{2})(1-t_k)}{2}+\dfrac{\beta_{k}}{2}\right)\Big/{\dfrac{\beta_{k}}{2}}=\dfrac{1}{q_{1,k}^{2}}.
		\end{equation*}
		Solving for $t_k$ and $q_{1,k}$ yields
		\begin{equation*}
			t_k=\dfrac{L(1-\epsilon_{2})}{L(1-\epsilon_{2})+\beta_{k}^{2}(1-\epsilon_{1})},
			\quad
			q_{1,k}=\sqrt{1\Big/\left(1+\dfrac{(1-\epsilon_{1})(1-\epsilon_{2})\beta_k}{(1-\epsilon_{1})\beta_{k}^{2}+(1-\epsilon_{2})L}\right)}.
		\end{equation*}
		Finally, noting that $\underline{\beta}\leq \beta_k \leq \overline{\beta}$ under Assumption 1, we define
		\[ q_{1}=\sqrt{{1}\Big/\left(1+\dfrac{(1-\epsilon_{1})(1-\epsilon_{2})\underline{\beta}}{(1-\epsilon_{1})\bar{\beta}^{2}+(1-\epsilon_{2})L}\right)}. \]
		It follows that $0 < q_{1,k} \leq q_{1} < 1$ for all $k$, and consequently
		\begin{equation*}
			\|\hat{v}^{k+1}-v^{\ast}\|_{M_{k}}\leq q_{1,k} \|\hat{v}^{k}-v^{\ast}\|_{M_{k}}\leq q_{1} \|\hat{v}^{k}-v^{\ast}\|_{M_{k}},
		\end{equation*}
		which completes the proof.
	\end{proof}

	\subsection{Convergence of inexact framework (\ref{eq:inexact ADMM-u})-(\ref{eq:inexact ADMM-lambda})}\label{sec:Convergence of inexact framework}	
	In this subsection, we present the convergence of the inexact ADMM framework (\ref{eq:inexact ADMM-u})-(\ref{eq:inexact ADMM-lambda}). We first establish  the globally strong convergence of the framework. Furthermore, if the criterion parameters ${\theta_k}$ decay geometrically, we will show that the framework admits a linear convergence rate in the nonergodic sense.

	By using similar arguments for Lemma \ref{lem: exact KKT est}, from the definition \eqref{eq:criterion function} and Lemma \ref{lem:estimate for KKT} we have the following result for the solutions of the inexact ADMM \eqref{eq:inexact ADMM-u}--\eqref{eq:inexact ADMM-lambda}.	
	\begin{lemma}
		Let $\{\omega^{k}\}$ be the sequence generated by the inexact ADMM (\ref{eq:inexact ADMM-u})-(\ref{eq:inexact ADMM-lambda}). Then, for any $k \geq 0$, it holds that
		\begin{equation}\label{eq: desent of KKT strong con inexact}
			\begin{aligned}
				&J(u^{k+1})+R(z^{k+1})-J(u^{\ast})-R(z^{\ast})-\langle \lambda^{\ast} ,u^{k+1}-z^{k+1}\rangle\\
				\leq
				&\frac{1}{2}(\| v^{k}-v^{\ast}\|_{M_{k}}^{2}-\| v^{k+1}-v^{\ast}\|_{M_{k}}^{2}-\| v^{k+1}-v^{k}\|_{M_{k}}^{2}-\|u^{k+1}-u^{\ast}\|^{2})\\
				&+\langle \sigma_{k}(u^{k+1}), u^{k+1}-u^{\ast}\rangle.
			\end{aligned}
		\end{equation}
	\end{lemma}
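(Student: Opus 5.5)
The plan is to repeat the argument of Lemma \ref{lem:estimate for KKT} and Lemma \ref{lem: exact KKT est} for the inexact iterates. The one new ingredient is that the $u$-optimality condition now holds only up to the residual $\sigma_{k}(u^{k+1})$. By the definition \eqref{eq:criterion function}, and since $DJ(u)=u+\gamma_{d}\bar{S}^{\ast}(S(u)-y_{d})$, we have
\begin{equation*}
	DJ(u^{k+1}) = \lambda^{k}-\beta_{k}(u^{k+1}-z^{k}) + \sigma_{k}(u^{k+1}).
\end{equation*}
Substituting the multiplier update \eqref{eq:inexact ADMM-lambda} gives
\begin{equation*}
	DJ(u^{k+1}) = \lambda^{k+1}-\beta_{k}(z^{k+1}-z^{k})+\sigma_{k}(u^{k+1}).
\end{equation*}
The $z$-update \eqref{eq:inexact ADMM-z} is computed exactly. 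It is the exact minimizer of $L_{\beta_{k}}(u^{k+1},\cdot,\lambda^{k})$, so its optimality condition, combined with \eqref{eq:inexact ADMM-lambda}, gives $-\lambda^{k+1}\in\partial R(z^{k+1})$, exactly as in \eqref{eq:desent of exact 1}.

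Next I would form the same combination as in \eqref{eq:desent of exact 2}, with hats removed. The left-hand quantity is
\begin{equation*}
	\langle DJ(u^{k+1}),u^{k+1}-u^{\ast}\rangle+\langle \xi^{k+1},z^{k+1}-z^{\ast}\rangle-\langle\lambda^{\ast},u^{k+1}-z^{k+1}\rangle,
\end{equation*}
where $\xi^{k+1}=-\lambda^{k+1}\in\partial R(z^{k+1})$. Inserting the perturbed expression for $DJ(u^{k+1})$ produces the exact-case right-hand side of \eqref{eq:desent of exact 2} plus one extra term, $\langle\sigma_{k}(u^{k+1}),u^{k+1}-u^{\ast}\rangle$. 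The remaining algebra uses $u^{\ast}=z^{\ast}$ and $u^{k+1}-z^{k+1}=\beta_{k}^{-1}(\lambda^{k}-\lambda^{k+1})$, and it is identical to the exact case.

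The cross term $\langle z^{k+1}-z^{k},\lambda^{k+1}-\lambda^{k}\rangle$ must still be shown nonpositive. This is where I expect care is needed. In the exact case it follows from $-\lambda^{j+1}\in\partial R(z^{j+1})$ at the consecutive indices $j=k-1$ and $j=k$, together with monotonicity of $\partial R$. That inclusion came only from the exact $z$-step, which is still exact here, so the inclusion holds at every index and the argument carries over (for $k=0$ one assumes, as in the exact case, that the initial pair satisfies $-\lambda^{0}\in\partial R(z^{0})$). Applying the identity \eqref{eq:cos} to the other two inner products then gives
\begin{equation*}
	\mbox{LHS}\le \frac{1}{2}\left(\|v^{k}-v^{\ast}\|_{M_{k}}^{2}-\|v^{k+1}-v^{\ast}\|_{M_{k}}^{2}-\|v^{k+1}-v^{k}\|_{M_{k}}^{2}\right)+\langle\sigma_{k}(u^{k+1}),u^{k+1}-u^{\ast}\rangle.
\end{equation*}

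Finally, as in Lemma \ref{lem: exact KKT est}, I would bound the left-hand side of \eqref{eq: desent of KKT strong con inexact} from above by this combination. The strong convexity inequality \eqref{eq:desent of strong con}, applied with $u_{1}=u^{\ast}$ and $u_{2}=u^{k+1}$, gives
\begin{equation*}
	J(u^{k+1})-J(u^{\ast})\le\langle DJ(u^{k+1}),u^{k+1}-u^{\ast}\rangle-\tfrac{1}{2}\|u^{k+1}-u^{\ast}\|^{2}.
\end{equation*}
Convexity of $R$ gives $R(z^{k+1})-R(z^{\ast})\le\langle\xi^{k+1},z^{k+1}-z^{\ast}\rangle$. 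Adding these two bounds and subtracting $\langle\lambda^{\ast},u^{k+1}-z^{k+1}\rangle$ yields the claimed inequality, with the term $-\frac{1}{2}\|u^{k+1}-u^{\ast}\|^{2}$ on the right. Apart from tracking the single residual term $\langle\sigma_{k}(u^{k+1}),u^{k+1}-u^{\ast}\rangle$, the proof is a routine transcription of the exact-case computation.
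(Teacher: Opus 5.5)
Your proposal is correct and follows the argument the paper points to. The paper's proof is only the one-line instruction to repeat Lemma~\ref{lem:estimate for KKT} and Lemma~\ref{lem: exact KKT est}, and you carry this out:
\begin{itemize}
\item you use the perturbed optimality condition $DJ(u^{k+1})=\lambda^{k+1}-\beta_k(z^{k+1}-z^k)+\sigma_k(u^{k+1})$ together with the exact inclusion $-\lambda^{k+1}\in\partial R(z^{k+1})$;
\item you then add strong convexity of $J$ and convexity of $R$.
\end{itemize}
Your remark that the case $k=0$ needs $-\lambda^{0}\in\partial R(z^{0})$ is a real requirement that the paper leaves implicit. Without it, the cross term $\langle z^{1}-z^{0},\lambda^{1}-\lambda^{0}\rangle$ need not be nonpositive.
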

	
	In addition to the above lemma, the following boundedness result for the sequence $\{v^{k}\}$ plays a key role in  in establishing the convergence of the sequence $\{\omega^{k}\}$.
	\begin{lemma}\label{lem:inexact solution bounded}
		Let $\{{\omega}^{k}\}$ be the sequence generated by the inexact ADMM (\ref{eq:inexact ADMM-u})-(\ref{eq:inexact ADMM-lambda}). Under Assumptions \ref{assume:augmented parameter} and \ref{assume:criterion parameter}, the sequence $\{v^k\}$ is uniformly bounded in the weighted norm. That is, there exists a constant $C_v > 0$, independent of the iteration $k$, such that for all $k \geq 0$,
		\begin{equation}\label{eq: inexact solution bounded}
			\|v^{k+1} - v^{\ast}\|_{M_{k+1}} \leq C_v.
		\end{equation}
	\end{lemma}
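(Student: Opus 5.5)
We prove the bound by a one-step perturbation recursion of the form $\|v^{k+1}-v^\ast\|_{M_{k+1}}\le(1+\eta_k)(\|v^k-v^\ast\|_{M_k}+c\,\theta_k)$, which is then unrolled using the summability in Assumptions~\ref{assume:augmented parameter} and \ref{assume:criterion parameter}.

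\textbf{Step 1 (one-step bound in the $M_k$ norm).} We compare the inexact iterate $v^{k+1}=(z^{k+1},\lambda^{k+1})$ with the auxiliary exact iterate $\tilde v^{k+1}=(\tilde z^{k+1},\tilde\lambda^{k+1})$ started from the same $v^k$. Since $\tilde\omega^{k+1}$ is one exact ADMM step from $\omega^k$, Theorem~\ref{thm: exact Convergence rate} applies to it. The weaker nonexpansiveness estimate also suffices here. This gives
\[
\|\tilde v^{k+1}-v^\ast\|_{M_k}\le q_1\|v^k-v^\ast\|_{M_k}\le \|v^k-v^\ast\|_{M_k}.
\]
Next we bound $\|v^{k+1}-\tilde v^{k+1}\|_{M_k}$ by a multiple of $\|u^{k+1}-\tilde u^{k+1}\|$. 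The $z$-update is $z=P_{\mathcal C}(S_{\gamma_s/\beta_k}(u-\beta_k^{-1}\lambda^k))$, and the map $u\mapsto z$ is a composition of nonexpansive maps. Hence $\|z^{k+1}-\tilde z^{k+1}\|\le\|u^{k+1}-\tilde u^{k+1}\|$. The multiplier update then gives
\[
\|\lambda^{k+1}-\tilde\lambda^{k+1}\|\le \beta_k\bigl(\|u^{k+1}-\tilde u^{k+1}\|+\|z^{k+1}-\tilde z^{k+1}\|\bigr)\le 2\beta_k\|u^{k+1}-\tilde u^{k+1}\|.
\]
Consequently $\|v^{k+1}-\tilde v^{k+1}\|_{M_k}\le\sqrt{5\beta_k}\,\|u^{k+1}-\tilde u^{k+1}\|\le \sqrt{5\overline\beta}\,\theta_k$, using Proposition~\ref{lem:inexact criterion}. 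The triangle inequality then yields
\[
\|v^{k+1}-v^\ast\|_{M_k}\le \|v^k-v^\ast\|_{M_k}+\sqrt{5\overline\beta}\,\theta_k .
\]

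\textbf{Step 2 (change of weight).} We pass from $M_k$ to $M_{k+1}$ using Assumption~\ref{assume:augmented parameter}. From $\beta_{k+1}\le(1+\eta_k)\beta_k$ and $\beta_{k+1}^{-1}\le(1+\eta_k)\beta_k^{-1}$ we get $\|v\|_{M_{k+1}}^2\le(1+\eta_k)\|v\|_{M_k}^2$, and hence $\|v\|_{M_{k+1}}\le(1+\eta_k)\|v\|_{M_k}$. Writing $a_k=\|v^k-v^\ast\|_{M_k}$, this gives the recursion
\[
a_{k+1}\le (1+\eta_k)\bigl(a_k+\sqrt{5\overline\beta}\,\theta_k\bigr).
\]

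\textbf{Step 3 (unrolling).} Iterating the recursion gives
\[
a_{k+1}\le \prod_{j=0}^{\infty}(1+\eta_j)\Bigl(a_0+\sqrt{5\overline\beta}\sum_{j=0}^\infty\theta_j\Bigr)=:C_v,
\]
which is finite by the summability of $\eta_k$ and $\theta_k$ and independent of $k$.

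\textbf{Main obstacle.} The delicate point is Step 1's use of Theorem~\ref{thm: exact Convergence rate} for the auxiliary step. That theorem is stated for the exact sequence $\hat\omega^k$, but its proof only uses a single exact step from an arbitrary $v^k$ together with the optimality relations \eqref{eq: OptCondition of auxiliary ADMM-u}--\eqref{eq: OptCondition of auxiliary ADMM-lambda}. It therefore transfers verbatim once we confirm that the $\lambda$ appearing in \eqref{eq: OptCondition of auxiliary ADMM-z} is $\lambda^k$. If this transfer were problematic, the fallback is to use the inexact lemma \eqref{eq: desent of KKT strong con inexact} directly. In that approach the left side is nonnegative by \eqref{eq:variational inequality}. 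The error term $\langle\sigma_k(u^{k+1}),u^{k+1}-u^\ast\rangle$ is absorbed by Young's inequality against $-\tfrac12\|u^{k+1}-u^\ast\|^2$, leaving $\tfrac12\theta_k^2$. This gives $a_{k+1}^2\le(1+\eta_k)(a_k^2+\theta_k^2)$, and since $\theta_k^2\le(\sup_j\theta_j)\theta_k$ is summable, the same unrolling applies.
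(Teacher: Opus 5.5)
Your proposal is correct and follows essentially the same route as the paper. You split $\|v^{k+1}-v^\ast\|_{M_{k+1}}$ through the auxiliary exact step $\tilde v^{k+1}$, bound $\|v^{k+1}-\tilde v^{k+1}\|$ by a multiple of $\theta_k$ via nonexpansiveness of $P_{\mathcal C}\circ S_{\gamma_s/\beta_k}$ and Proposition~\ref{lem:inexact criterion}, control $\tilde v^{k+1}$ by Theorem~\ref{thm: exact Convergence rate}, change weights with Assumption~\ref{assume:augmented parameter}, and unroll. The remaining differences are cosmetic: you measure the perturbation in the $M_k$-norm before changing weights, getting $a_{k+1}\le(1+\eta_k)(a_k+\sqrt{5\overline\beta}\,\theta_k)$ where the paper has $a_{k+1}\le C_\beta\theta_k+(1+\eta_k)q_1a_k$, and you need only $q_1\le 1$; your reading of $\tilde\lambda^k$ as $\lambda^k$ in \eqref{eq: OptCondition of auxiliary ADMM-z} is the intended one.
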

	\begin{proof}
		From (\ref{eq:inexact ADMM-z}), (\ref{eq:inexact ADMM-lambda}), (\ref{eq: OptCondition of auxiliary ADMM-z}) and (\ref{eq: OptCondition of auxiliary ADMM-lambda}), it follows that		
		\begin{align}
			&\| v^{k+1}-\tilde{v}^{k+1}\|_{M_{k+1}}^2 \nonumber \\
			=&\beta_{k+1}\| z^{k+1}-\tilde{z}^{k+1}\|^{2}+
			\beta_{k+1}^{-1}\| \lambda^{k+1}-\tilde{\lambda}^{k+1}\|^{2} \nonumber\\
			= &\beta_{k+1}\| z^{k+1}-\tilde{z}^{k+1}\|^{2}
			+\beta_{k+1}^{-1}\|\beta_{k}(\tilde{u}^{k+1}-\tilde{z}^{k+1})-\beta_{k}(u^{k+1}-z^{k+1})\|^{2}  \nonumber\\
			\leq & (\beta_{k+1}+2\beta_{k}^{2}\beta_{k+1}^{-1})\| z^{k+1}-\tilde{z}^{k+1}\|^{2}+2\beta_{k}^{2}\beta_{k+1}^{-1}\| \tilde{u}^{k+1}-u^{k+1}\|^{2}\label{eq:auxiliary convergence I 1}\\
			\leq &
			(\beta_{k+1}+2\beta_{k}^{2}\beta_{k+1}^{-1})\left\|P_{\mathcal{C}}\left(S_{\frac{\gamma_{s}}{\beta_{k}}}(u^{k+1} - \beta^{-1}_{k}\lambda^{k} )\right)-P_{\mathcal{C}}\left(S_{\frac{\gamma_{s}}{\beta_{k}}}(\tilde{u}^{k+1} - \beta^{-1}_{k}\lambda^{k} )\right) \right\|^{2} \nonumber\\
			&+
			2\beta_{k}^{2}\beta_{k+1}^{-1}\|\tilde{u}^{k+1}-u^{k+1}\|^{2} \nonumber\\
			\leq &  \max\limits_{k}\left\{\beta_{k+1}+4\beta_{k}^{2}\beta_{k+1}^{-1}\right\} \|u^{k+1} - \tilde{u}^{k+1}\|^{2}
			\leq C_{\beta}^2\theta_{k}^{2},  \nonumber
		\end{align}
		with $C_{\beta}=\sqrt{\bar{\beta}+4\bar{\beta}^{2}\underline{\beta}^{-1}}$.  The second-to-last inequality comes from the nonexpansivity of $P_{\mathcal{C}}$ and $ S_{\frac{\gamma_{s}}{\beta_{k}}}$. Applying Theorem \ref{thm: exact Convergence rate} to the auxiliary solutions and inexact solutions yields
		\begin{equation*}
			\|\tilde{v}^{k+1}-v^{\ast}\|_{M_{k}}\leq q_{1} \|v^{k}-v^{\ast}\|_{M_{k}}.
		\end{equation*}
		Then, we have
		\begin{align}
			&\| v^{k+1}-v^{\ast}\|_{M_{k+1}} \nonumber \\
			\leq& \|v^{k+1}-\tilde{v}^{k+1}\|_{M_{k+1}}+\| \tilde{v}^{k+1}-v^{\ast}\|_{M_{k+1}} \nonumber \\
			\leq &\|v^{k+1}-\tilde{v}^{k+1}\|_{M_{k+1}}+(1+\eta_{k})\| \tilde{v}^{k+1}-v^{\ast}\|_{M_{k}}\nonumber \\
			\leq &C_{\beta}\theta_{k} +(1+\eta_{k})q_{1}\|v^{k}-v^{\ast} \|_{M_{k}} \nonumber \\
			\leq& C_{\beta} \theta_{k}+ (1+\eta_{k})q_{1}((1+\eta_{k-1})q_{1}\|v^{k-1}-v^{\ast}\|_{M_{k-1}}+C_{\beta} \theta_{k-1}) \label{eq: inexact solution bounded_1} \\
			\leq& \left(\prod_{k=1}^{\infty}(1+\eta_{k})\right)q_{1}^{k+1}\|v^{0}-v^{\ast}\|_{M_{0}}+C_{\beta}\left(\prod_{k=1}^{\infty}(1+\eta_{k})\right)\sum_{i=0}^{k}q_{1}^{k-i}\theta_{i} \nonumber \\
			\leq& \left(\prod_{k=1}^{\infty}(1+\eta_{k})\right) q_{1}^{k+1}\|v^{0}-v^{\ast}\|_{M_{0}}+C_{\beta}\left(\prod_{k=1}^{\infty}(1+\eta_{k})\right) \sum_{i=0}^{k} \theta_{i}
			<\infty, \nonumber
		\end{align}
		which establishes the result (\ref{eq: inexact solution bounded}).
	\end{proof}

	Now, we are in a position to establish global strong convergence of the inexact ADMM framework \eqref{eq:inexact ADMM-u}--\eqref{eq:inexact ADMM-lambda}.
	\begin{theorem}\label{thm: inexact convergence}
		Let $\{{\omega}^{k}\}$  be the sequences generated by (\ref{eq:inexact ADMM-u})-(\ref{eq:inexact ADMM-lambda}). Under Assumption 1 and Assumption 2, it holds that
		\begin{equation}\label{eq: inexact convergence of w}
			\lim\limits_{k\rightarrow \infty} \| \omega^{k+1}-\omega^{\ast}\|= 0.
		\end{equation}
	\end{theorem}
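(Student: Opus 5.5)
The plan is to show first that $v^{k}=(z^{k},\lambda^{k})^{\top}\to v^{\ast}$ in the weighted norm, and then to recover the convergence of $u^{k}$ from the optimality relations. The main tool is the perturbed contraction already derived in the proof of Lemma~\ref{lem:inexact solution bounded}:
\begin{equation*}
\|v^{k+1}-v^{\ast}\|_{M_{k+1}}\leq (1+\eta_{k})q_{1}\|v^{k}-v^{\ast}\|_{M_{k}}+C_{\beta}\theta_{k}.
\end{equation*}
Set $a_k=\|v^{k}-v^{\ast}\|_{M_k}$ and $P=\prod_{k}(1+\eta_k)<\infty$. Unrolling the recursion gives
\begin{equation*}
a_{k+1}\leq P q_1^{k+1}a_0 + C_\beta P\sum_{i=0}^{k} q_1^{k-i}\theta_i .
\end{equation*}
The first term tends to zero because $q_1<1$. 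The second term is a discrete convolution of the summable sequence $\{q_1^{j}\}$ with the null sequence $\{\theta_i\}$; Assumption~\ref{assume:criterion parameter} gives $\theta_i\to 0$. A standard split handles it: for $i\leq N$ the terms are bounded by $q_1^{k-N}\sum_i\theta_i$, and for $i>N$ they are bounded by $\sup_{i>N}\theta_i/(1-q_1)$. Hence $a_k\to0$.

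Since $\underline{\beta}\leq\beta_k\leq\overline{\beta}$, the $M_k$-norms are uniformly equivalent to the plain product norm. It follows that $\|z^{k}-z^{\ast}\|\to 0$ and $\|\lambda^{k}-\lambda^{\ast}\|\to0$.

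For the $u$-component I use the triangle inequality
\begin{equation*}
\|u^{k+1}-u^{\ast}\|\leq\|u^{k+1}-\tilde u^{k+1}\|+\|\tilde u^{k+1}-u^{\ast}\|.
\end{equation*}
Proposition~\ref{lem:inexact criterion} bounds the first term by $\theta_k\to0$. For the second term, I compare \eqref{eq: OptCondition of auxiliary ADMM-u} with the optimality condition of $\omega^\ast$, namely $DJ(u^\ast)=\lambda^\ast$ and $u^\ast=z^\ast$, which can be written as $DJ(u^\ast)-(\lambda^\ast-\beta_k(u^\ast-z^\ast))=0$. Subtracting the two relations, taking the inner product with $\tilde u^{k+1}-u^\ast$, and using the strong monotonicity \eqref{eq:strongly monotone} yields
\begin{equation*}
(1+\beta_k)\|\tilde u^{k+1}-u^\ast\|\leq \|\lambda^k-\lambda^\ast\|+\beta_k\|z^k-z^\ast\|.
\end{equation*}
The right-hand side tends to zero by the previous step, so $\tilde u^{k+1}\to u^{\ast}$ and therefore $u^{k+1}\to u^\ast$. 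Combining the three components gives \eqref{eq: inexact convergence of w}.

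The main obstacle is the legitimacy of the contraction step $\|\tilde v^{k+1}-v^\ast\|_{M_k}\leq q_1\|v^k-v^\ast\|_{M_k}$. Theorem~\ref{thm: exact Convergence rate} is stated for the exact sequence, and it must be applied to one exact ADMM step started from the inexact point $v^k$. I would verify this by noting that the proofs of Lemma~\ref{lem:estimate for KKT}, Lemma~\ref{lem: exact KKT est} and Theorem~\ref{thm: exact Convergence rate} use only the single-step optimality conditions, with arbitrary $(\hat z^k,\hat\lambda^k)$ as input. Those conditions are exactly \eqref{eq: OptCondition of auxiliary ADMM-u}--\eqref{eq: OptCondition of auxiliary ADMM-lambda} with $(\hat z^{k},\hat\lambda^{k})$ replaced by $(z^{k},\lambda^{k})$, so the argument transfers verbatim; this matches how Lemma~\ref{lem:inexact solution bounded} already uses it.
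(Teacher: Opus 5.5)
Your argument is correct, but it is organized differently from the paper's. The paper uses the perturbed contraction only to get the uniform bound of Lemma~\ref{lem:inexact solution bounded}, replacing $\sum_i q_1^{k-i}\theta_i$ crudely by $\sum_i\theta_i$. It then runs a Fej\'er-type telescoping argument. The inexact descent inequality \eqref{eq: desent of KKT strong con inexact}, with its error term $\langle\sigma_k(u^{k+1}),u^{k+1}-u^\ast\rangle$, is combined with \eqref{eq:variational inequality} and the $\eta_k$-drift of the weighted norms. Summing with $\sum_k\eta_k<\infty$ and $\sum_k\theta_k<\infty$ gives $\sum_k\|u^{k+1}-u^\ast\|^2<\infty$ and $\sum_k\|v^{k+1}-v^k\|_{M_k}^2<\infty$. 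Convergence of $z^k$ then follows from the multiplier update, and convergence of $\lambda^k$ from the definition of $\sigma_k$ and the Lipschitz continuity of $DJ$.

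You instead push the unrolled recursion all the way, with a Toeplitz-type splitting, to get $v^k\to v^\ast$ directly. (It is the same recursion the paper later uses for the linear rate.) You then recover $u^k$ through Proposition~\ref{lem:inexact criterion} and the strong monotonicity of the auxiliary $u$-subproblem.

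Your route is shorter and bypasses the inexact descent inequality, which the paper only sketches. It also needs only $\theta_k\to0$: bounding the block $i\le N$ by $(N+1)q_1^{k-N}\sup_i\theta_i$ instead of $q_1^{k-N}\sum_i\theta_i$ removes every use of summability. In exchange, the paper's route yields square-summability of $\|u^{k+1}-u^\ast\|$ and of the successive differences $\|v^{k+1}-v^k\|_{M_k}$. It also uses the contraction factor $q_1$ only through boundedness.

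One correction to your justification of the one-step contraction. Lemma~\ref{lem:estimate for KKT} does not hold for an arbitrary input pair, because its monotonicity step uses $-\hat\lambda^{k}\in\partial R(\hat z^{k})$ in addition to $-\hat\lambda^{k+1}\in\partial R(\hat z^{k+1})$. For the inexact iterates this inclusion does hold for every $k\ge1$. The reason is that $z^k$ and $\lambda^k$ are produced by exact $z$- and $\lambda$-updates, which give $0\in\partial R(z^k)+\lambda^k$. So start your recursion at $k=1$, or assume the inclusion for the initial pair; this does not affect the limit.
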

	\begin{proof}
		According to Assumption 1 and the definition of  $\|\,\cdot \,\|_{M_{k}}$ in (\ref{eq:induced norm}), we have
		\begin{equation}\label{eq:Assump 1 vk-v*}
			\| v^{k+1}-v^{\ast}\|_{M_{k+1}}^{2}-\| v^{k+1}-v^{\ast}\|_{M_{k}}^{2}\leq \eta_{k}\| v^{k+1}-v^{\ast}\|_{M_{k}}^{2}.
		\end{equation}
		Based on the estimation (\ref{eq: inexact solution bounded}), we get
		\begin{equation}\label{eq: uniform_bound}
			\begin{aligned}
				\|v^{k+1}-v^{\ast}\|\leq &\,\sqrt{\max\{ \bar{\beta}, \underline{\beta}^{-1}\}}\|v^{k+1}-v^{\ast}\|_{M_{k+1}}\leq \sqrt{\max\{ \bar{\beta}, \underline{\beta}^{-1}\}} C_{v},\\
				\| v^{k+1}-v^{\ast}\|_{M_{k}}\leq&\, \sqrt{\max\{ \bar{\beta}, \underline{\beta}^{-1}\}}\|v^{k+1}-v^{\ast}\|\leq \max\{ \bar{\beta}, \underline{\beta}^{-1}\} C_{v},\\
				\|u^{k+1}-u^{\ast}\|\leq&\, \|u^{k+1}-z^{k+1}\|+\|z^{k+1}-z^{\ast}\|={\beta_{k}}^{-1}\|\lambda^{k+1}-\lambda^{k}\|+\|z^{k+1}-z^{\ast}\|\\
				\leq& {\beta_{k}}^{-1}(\|\lambda^{k+1}-\lambda^{\ast}\|+\|\lambda^{k}-\lambda^{\ast}\|)+\|z^{k+1}-z^{\ast}\| \\
				\leq& \max\{2\underline{\beta}^{-1},1\}\sqrt{\max\{ \bar{\beta}, \underline{\beta}^{-1}\}} C_{v}.
			\end{aligned}
		\end{equation}
		For simplicity, the upper bound of the sequences $\| v^{k+1}-v^{\ast}\|_{M_{k+1}}$, $\|v^{k+1}-v^{\ast}\|$, $\| v^{k+1}-v^{\ast}\|_{M_{k}}$ and $\|u^{k+1}-u^{\ast}\|$ is uniformly denoted by $C_{w}$.
		Further, from (\ref{eq:variational inequality}), (\ref{eq: desent of KKT strong con inexact}) and (\ref{eq:Assump 1 vk-v*}), we have
		\begin{align*}
			0\leq &\| v^{k}-v^{\ast}\|_{M_{k}}^{2}-\| v^{k+1}-v^{\ast}\|_{M_{k}}^{2}-\| v^{k+1}-v^{k}\|_{M_{k}}^{2}-\|u^{k+1}-u^{\ast}\|^{2}+2\langle \sigma_{k}(u^{k+1}), u^{k+1}-u^{\ast}\rangle\\
			=
			&\| v^{k}-v^{\ast}\|_{M_{k}}^{2}-\| v^{k+1}-v^{\ast}\|_{M_{k+1}}^{2}+\| v^{k+1}-v^{\ast}\|_{M_{k+1}}^{2}-\| v^{k+1}-v^{\ast}\|_{M_{k}}^{2}\\
			&-\| v^{k+1}-v^{k}\|_{M_{k}}^{2}-\|u^{k+1}-u^{\ast}\|^{2}+2\langle \sigma_{k}(u^{k+1}), u^{k+1}-u^{\ast}\rangle\\
			\leq
			&\| v^{k}-v^{\ast}\|_{M_{k}}^{2}-\| v^{k+1}-v^{\ast}\|_{M_{k+1}}^{2}+\eta_{k}\| v^{k+1}-v^{\ast}\|_{M_{k}}^{2}\\
			&-\| v^{k+1}-v^{k}\|_{M_{k}}^{2}-\|u^{k+1}-u^{\ast}\|^{2}+2\| \sigma_{k}(u^{k+1})\|\|u^{k+1}-u^{\ast}\|\\
			\leq
			&\| v^{k}-v^{\ast}\|_{M_{k}}^{2}-\| v^{k+1}-v^{\ast}\|_{M_{k+1}}^{2}+\eta_{k}C_{w}^2\\
			&-\| v^{k+1}-v^{k}\|_{M_{k}}^{2}-\|u^{k+1}-u^{\ast}\|^{2}+2C_{w}\theta_{k}.
		\end{align*}
		Summing up the above inequality yields
		\begin{align*}
			\sum_{k=0}^{K}\| v^{k+1}-v^{k}\|_{M_{k}}^{2}+\sum_{k=0}^{K}\|u^{k+1}-u^{\ast}\|^{2}\leq \| v^{0}-v^{\ast}\|_{M_{0}}^{2}+C^2_{w}\left(\sum_{k=0}^{K}\eta_{k}\right)+2C_{w}\left(\sum_{k=0}^{K}\theta_{k}\right)
			<\infty,
		\end{align*}
		which implies that
		\begin{align}\label{eq: boundedness of u}
			\lim\limits_{k\rightarrow \infty} \| u^{k+1}-u^{\ast}\|= 0,\qquad
			\lim\limits_{k\rightarrow \infty} \| v^{k+1}-v^{k}\|=0.
		\end{align}
		From the equation (\ref{eq:inexact ADMM-lambda}) and the boundedness of $\beta_k$, it follows that
		\begin{align*}
			\| z^{k+1}-z^{\ast}\|\leq \|z^{k+1}-u^{k+1}\|+\|u^{k+1}-u^{\ast}\|\leq \underline{\beta}^{-1}\|\lambda^{k+1}-\lambda^{k}\|+\|u^{k+1}-u^{\ast}\|.
		\end{align*}
		Therefore, combining the convergence properties in (\ref{eq: boundedness of u}), we obtain
		\begin{equation*}
			\lim\limits_{k\rightarrow \infty} \| z^{k+1}-z^{\ast}\|= 0.
		\end{equation*}
		In addition, combining the definition of $\sigma_{k}(u)$ in (\ref{eq:criterion function}) with the equation (\ref{eq:inexact ADMM-lambda}),
		and applying the optimality of $\omega^{\ast}$ and the property of $\sigma_{k}(u)$ in (\ref{eq:inexact criterion}), we have
		\begin{align*}
			\|\lambda^{k+1}-\lambda^{\ast}\|=&\|DJ(u^{k+1})+\beta_{k}(z^{k+1}-z^{k})-\sigma_{k}(u^{k+1})-DJ(u^{\ast})\|\\
			\leq & \|DJ(u^{k+1})-DJ(u^{\ast})\|+\beta_{k}\|z^{k+1}-z^{k}\|+\|\sigma_{k}(u^{k+1})\|\\
			\leq & L \|u^{k+1}-u^{\ast}\|+\beta_{k}\|z^{k+1}-z^{k}\|+\theta_{k}.
		\end{align*}
		Combining the convergence properties in (\ref{eq: boundedness of u}) and Assumption 2, we have
		\begin{equation*}
			\lim\limits_{k\rightarrow \infty} \| \lambda^{k+1}-\lambda^{\ast}\|= 0.
		\end{equation*}
		This completes the proof.
	\end{proof}
	
	With a suitable choice of $\theta_k$, we obtain a linear rate of convergence for the inexact ADMM framework  \eqref{eq:inexact ADMM-u}--\eqref{eq:inexact ADMM-lambda}.
	
	\begin{theorem}
		Let $\{{\omega}^{k}\}$  be the sequence generated by (\ref{eq:inexact ADMM-u})-(\ref{eq:inexact ADMM-lambda}) with $ \theta_{k}=q_{2}^{k}$, where $q_{2}\in (0,1)$ is a given constant distinct from $q_{1}$. Under Assumption 1, there exist positive constants $ C $ and $q = \max(q_1, q_2) \in (0,1)$ such that
		\begin{equation*}
			\| v^{k+1}-v^{\ast}\|_{M_{k+1}}\leq  C q^{k+1}.
		\end{equation*}
	\end{theorem}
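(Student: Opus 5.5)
The plan is to reuse the one-step recursion from the proof of Lemma~\ref{lem:inexact solution bounded} and then sum the resulting geometric-type series explicitly, using $\theta_k=q_2^k$.

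\textbf{One-step recursion.} From that proof we already have, for every $k\ge 0$,
\begin{equation*}
\|v^{k+1}-v^{\ast}\|_{M_{k+1}}\le C_{\beta}\theta_k+(1+\eta_k)\,q_1\,\|v^{k}-v^{\ast}\|_{M_{k}} .
\end{equation*}
This combines three ingredients:
\begin{itemize}
\item the perturbation bound $\|v^{k+1}-\tilde v^{k+1}\|_{M_{k+1}}\le C_\beta\theta_k$, which comes from the nonexpansivity of $P_{\mathcal C}\circ S_{\gamma_s/\beta_k}$ and Proposition~\ref{lem:inexact criterion};
\item the contraction $\|\tilde v^{k+1}-v^\ast\|_{M_k}\le q_1\|v^k-v^\ast\|_{M_k}$ from Theorem~\ref{thm: exact Convergence rate}, applied with starting point $v^k$;
\item the change of weight $\|\cdot\|_{M_{k+1}}\le(1+\eta_k)\|\cdot\|_{M_k}$ from Assumption~\ref{assume:augmented parameter}.
\end{itemize}
Only Assumption~\ref{assume:augmented parameter} is needed here. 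Assumption~\ref{assume:criterion parameter} holds automatically because $\sum_k q_2^k<\infty$.

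\textbf{Unrolling.} Set $a_k=\|v^k-v^\ast\|_{M_k}$ and $P=\prod_{j=0}^{\infty}(1+\eta_j)<\infty$. Unrolling the recursion and bounding every partial product of $(1+\eta_j)$ by $P$ gives
\begin{equation*}
a_{k+1}\le P\,q_1^{k+1}a_0+P\,C_\beta\sum_{i=0}^{k}q_1^{k-i}q_2^{i}.
\end{equation*}

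\textbf{Summing the series.} This is where $q_1\neq q_2$ is used. Since
\[
\sum_{i=0}^{k}q_1^{k-i}q_2^{i}=\frac{q_1^{k+1}-q_2^{k+1}}{q_1-q_2},
\]
the sum is bounded by
\[
\frac{q^{k+1}}{|q_1-q_2|}, \qquad q=\max(q_1,q_2).
\]
Indeed, the numerator and denominator have the same sign, and $|q_1^{k+1}-q_2^{k+1}|\le q^{k+1}$. Hence
\begin{equation*}
a_{k+1}\le P\Big(a_0+\frac{C_\beta}{|q_1-q_2|}\Big)q^{k+1},
\end{equation*}
and we take $C=P\big(\|v^0-v^\ast\|_{M_0}+C_\beta/|q_1-q_2|\big)$.

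\textbf{Main obstacle.} The delicate step is the bookkeeping of the factors $(1+\eta_j)$ and the indices in the unrolling. Each term in the sum picks up a product $\prod_{j=i+1}^{k}(1+\eta_j)$, and one must check that it is uniformly bounded by $P$, which holds since all factors are at least $1$.

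A second point to watch is that Theorem~\ref{thm: exact Convergence rate} is applied to the auxiliary step started from the inexact iterate $v^k$ rather than from $\hat v^k$. This is legitimate because its proof used only the one-step optimality conditions with an arbitrary starting pair.

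Finally, the condition $q_1\ne q_2$ is what prevents the extra polynomial factor $(k+1)$ that would appear if $q_1=q_2$.
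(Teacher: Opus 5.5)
Your proposal is correct and follows the paper's proof step for step: you substitute $\theta_k=q_2^k$ into the unrolled recursion from the proof of Lemma~\ref{lem:inexact solution bounded}, evaluate $\sum_{i=0}^{k}q_1^{k-i}q_2^{i}=(q_1^{k+1}-q_2^{k+1})/(q_1-q_2)\le q^{k+1}/|q_1-q_2|$, and obtain the same constant $C$ (your $P=\prod_{j\ge 0}(1+\eta_j)$, which includes the factor $1+\eta_0$, is in fact the right product here). One minor correction to your side remark: Theorem~\ref{thm: exact Convergence rate} does not hold from an arbitrary starting pair, because the monotonicity step in Lemma~\ref{lem:estimate for KKT} needs $-\lambda^k\in\partial R(z^k)$; this holds for every $k\ge 1$ since $z^k,\lambda^k$ come from the exact $z$- and $\lambda$-updates, while for $k=0$ it needs a compatible initialization (e.g.\ $z^0=\lambda^0=0$ with $a\le 0\le b$), or else you start the unrolling at $k=1$ and absorb $\|v^1-v^\ast\|_{M_1}$ into $C$.
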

	\begin{proof}
		Substituting $ \theta_{k}=q_{2}^{k}$ into the second-to-last inequality in (\ref{eq: inexact solution bounded_1}), we obtain
		\begin{equation*}
			\begin{aligned}
				\| v^{k+1}-v^{\ast}\|_{M_{k+1}}
				&\leq \left(\prod_{k=1}^{\infty}(1+\eta_{k})\right)q_{1}^{k+1}\|v^{0}-v^{\ast}\|_{M_{0}}+C_{\beta}\left(\prod_{k=1}^{\infty}(1+\eta_{k})\right)\sum_{i=0}^{k}q_{1}^{k-i}q_{2}^{i}\\
				&\leq \left(\prod_{k=1}^{\infty}(1+\eta_{k})\right) \left(\|v^{0}-v^{\ast}\|_{M_{0}}+C_{\beta}|q_{1}-q_{2}|^{-1} \right)q^{k+1},
			\end{aligned}
		\end{equation*}
		which completes the proof.
	\end{proof}
	
	\begin{remark}
		Although the choice $\theta_k = q_2^k$ guarantees linear convergence of the proposed framework, it may not be the most computationally efficient strategy in practice. There is an inherent trade-off between the inner-iteration accuracy and outer-iteration convergence speed: solving the subproblem more accurately (smaller $\theta_k$) yields a convergence speed closer to that of the exact algorithm, but at the cost of increased inner-loop computational effort. Consequently, the overall computation time may not be minimized. Therefore, striking an appropriate balance between inner accuracy and outer convergence speed remains an important and challenging issue. Besides the geometric decay $\theta_k = q_2^k$, another admissible choice satisfying Assumption 2 is the algebraic decay $\theta_k = 1/k^{\alpha}$ with $\alpha > 1$. The relative effectiveness of these inexact stopping strategies is problem-dependent and requires careful consideration in implementation; see the numerical results shown in next section.
	\end{remark}
	\section{Numerical experiments}\label{sec:numerical experiments}
	In this section, we present numerical results to demonstrate the efficiency of the inexact ADMM framework (\ref{eq:inexact ADMM-u})–(\ref{eq:inexact ADMM-lambda}), referred to as InADMM, on several instances of the parabolic distributed optimal control problem (\ref{eq:obj}).
	
	\subsection{Implementation of the framework (\ref{eq:inexact ADMM-u})-(\ref{eq:inexact ADMM-lambda})}\label{sec:implementation}
	In this subsection, we describe in detail the implementation of the InADMM framework (\ref{eq:inexact ADMM-u})–(\ref{eq:inexact ADMM-lambda}) and present a practical algorithm for the numerical solution of the model problem (\ref{eq:obj})–(\ref{eq:control constraint}).
	
	Recalling the definition of $\bar{S}^{\ast}$, the evaluator $\sigma_{k}(u^{k+1})$ can be rewritten as
	\begin{equation*}\label{eq: reformulation of sigma}
		\begin{aligned}
			\sigma_{k}(u^{k+1}) = ((1+\beta_{k})I  + \gamma_{d} \bar{S}^{\ast}\bar{S})u^{k+1} - ( \beta_{k} z^{k}  + \lambda^{k})
			+ \gamma_{d}\bar{S}^{\ast}( S(0) - y_{d}),
		\end{aligned}
	\end{equation*}
	which can be expressed as
	\begin{equation*}\label{eq: redefinition of sigma}
		\sigma_{k}(u^{k+1}) = H^{k}u^{k+1} -d^{k},
	\end{equation*}
	where $ H^{k} =  (1+\beta_{k})I  + \gamma_{d}\bar{S}^{\ast}\bar{S}$ and $d^{k} =  \beta_{k} z^{k}  + \lambda^{k}-  \gamma_{d}\bar{S}^{\ast}( S(0) - y_{d} )$.
	Thus we can reformulate the inexact criterion (\ref{eq:inexact criterion}) as
	\begin{equation*}\label{eq: reformulation of inexact criterion}
		\|H^{k}u^{k+1} -d^{k}\|\leq \theta_{k}.
	\end{equation*}
	
	It is easy to check that the operator $H^{k}\in \mathcal{L}(U,Y) $ is self-adjoint and positive definite. Then for solving the $u$-subproblem, we can apply the CG method as described in Algorithm~\ref{alg: inner CG}.
	\begin{algorithm}[htbp]
		\caption{ CG for the $ u$-subproblem}
		\label{alg: inner CG}
		
		\textbf{Input:} 
		$ u^{(0)}=u^{k} $, $ m=0 $, $r^{(0)}=-\sigma_{k}(u^{(0)})$, $q^{(0)}=r^{(0)}$.
		
		\textbf{do}
		\begin{algorithmic}[1]
			\item Compute $u^{(m+1)}$ by
			\begin{equation*}
				u^{(m+1)}=u^{(m)}+\alpha_{m}q^{(m)},\quad\text{with}\quad
				\alpha_{m}=\dfrac{\langle r^{(m)},q^{(m)}\rangle}{\langle H^{k}q^{(m)},q^{(m)}\rangle},
			\end{equation*}			
			
			\item Compute $r^{(m+1)}$ by
			\begin{equation*}
				r^{(m+1)}=r^{(m)}-\alpha_{m}H^{k}q^{(m)},
			\end{equation*}
			
			\item Compute $q^{(m+1)}$ by
			\begin{equation*}
				q^{(m+1)}=r^{(m+1)}+\rho_{m}q^{(m)} \quad \text{with} \quad
				\rho_{m}=\dfrac{\langle r^{(m+1)},r^{(m+1)}\rangle}{\langle r^{(m)},r^{(m)}\rangle}.
			\end{equation*}	
		\end{algorithmic}
		\textbf{while} $ \| \sigma_{k}(u^{(m+1)})\|\leq\theta_{k} $.
		
		\textbf{Output}
		\begin{equation*}
			u^{k+1} = u^{(m+1)}.
		\end{equation*}
	\end{algorithm}
	
	In the $u$-subproblem of the framework \eqref{eq:inexact ADMM-u}--\eqref{eq:inexact ADMM-lambda}, the operator $H^{k}$ is almost unchanged throughout the iterations, with the only variation coming from the parameter $\beta_{k}$, which in turn enables an efficient execution of Algorithm~1. Now, based on the above discussion of the $u$-subproblem, we can  instantiate the InADMM framework (\ref{eq:inexact ADMM-u})–(\ref{eq:inexact ADMM-lambda}) in the following Algorithm~\ref{alg: InADMM}.
	
	\begin{algorithm}[htbp]
		\caption{ InADMM for the optimal control problem $(\ref{eq:obj})-(\ref{eq:control constraint})$}
		\label{alg: InADMM}
		\textbf{Input:} 
		$ \omega^{0}=(u^{0},z^{0},\lambda^{0})^{T}$, $\{\beta_{k}\}_{0}^{\infty}$, and $\{\theta_{k}\}_{0}^{\infty} $.
		
		\textbf{for} $ k\geq 0 $
		\begin{algorithmic}[1]
			\item Update $u^{k+1}$ by Algorithm 1 with $\theta_{k}$.
			\item Update $z^{k+1}$ by	
			\begin{equation*}
				z^{k+1}= P_{\mathcal{C}}\left(S_{\frac{\gamma_{s}}{\beta_{k}}}\left(u^{k+1} - \beta^{-1}_{k}\lambda^{k}\right)\right).
			\end{equation*}
			\item Update $\lambda^{k+1}$ by
			\begin{equation*}
				\lambda^{k+1}=\lambda^{k}-\beta_{k}(u^{k+1}-z^{k+1}).
			\end{equation*} 		
		\end{algorithmic}
		\textbf{end for}
	\end{algorithm}
	
	In practice, the value of penalty parameter $\beta_{k}$ usually has an important effect on the behavior of the algorithm: values that are too large or too small tend to slow down convergence. In our numerical implementation, we adopt the strategy in \cite{he2002new} to adjust $\beta_{k}$ as follows:
	\begin{equation*}\label{eq:update of beta}
		\beta_{k+1}=\begin{cases}
			(1+\eta_{k})\beta_{k}      &\text{if}
			\quad \beta_{k-1} \|z^{k-1} -z^{k}\| < \frac{1}{4}\|u^{k}-z^{k}\|, \\
			\frac{\beta_{k}}{1+\eta_{k}}  &\text{if}
			\quad \beta_{k-1} \|z^{k-1} -z^{k}\| > \frac{1}{4}\|u^{k}-z^{k}\|,\\
			\beta_{k} &\text{otherwise},
		\end{cases}
	\end{equation*}
	which ensures that the sequence ${\beta_k}$  satisfies Assumption 1.
	
	\subsection{Experiment settings}\label{sec:experiment settings}
	To demonstrate the efficiency of our method, in consideration of the feasible iterative $\{z^{k}\}$, we define the state relative distance SRD and the objective value Obj as
	\begin{equation*}
		\text{SRD} = \frac{\|S(z^{k})-y_{d}\|}{\|y_{d}\|},\quad \text{and} \quad
		\text{Obj} =  \dfrac{\gamma_{d}}{2}\| S(z^{k})-y_{d}\|_{L^{2}(Q)}^2+\dfrac{1}{2}\| z^{k}\|_{L^{2}(G)}^2+\gamma_{s}\| z^{k}\|_{L^{1}(G)},
	\end{equation*}
	respectively. The primal residual PR and dual residual DR are defined as
	\begin{equation*}
		\text{PR} = \frac{\beta_{k}\|z^{k}-z^{k-1}\|}{\|z^{k-1}\|}, \quad \text{and} \quad
		\text{DR} = \frac{\|u^{k} - z^{k}\|}{\max\{\|u^{k}\|,\|z^{k}\|\}},
	\end{equation*}
	respectively.  We let {CG-Ave/Max} denote the average and maximum steps of the CG method.
	
	
	For all experiments, we adopt
	\begin{equation*}
		\max\left\{\text{PR},\text{DR}\right\}\leq \text{tol}
	\end{equation*}
	as the stopping criterion uniformly, where $\text{tol} =10^{-4}$. In addition,  the subproblem \eqref{eq:inexact ADMM-u} is regarded as solved ``exactly'' when the tolerance $\theta_k$ falls below $10^{-6}$. The maximum number of outer iterations is set to $10^{3}$.
	
	All numerical simulations  are started from a zero initial guess. For numerical discretization, we use linear finite elements on a uniform spatial mesh of size $h$ and the backward Euler scheme with temporal step size $\tau$. All experiments were performed on a desktop computer with a 13th Gen Intel Core i7-13700K 3.40 GHz processor.
	
	\subsection{Numerical results for the non-sparse case}\label{sec:smooth case}
	
	In this subsection, we test Algorithm 2 on the constrained parabolic optimal distributed control problem without sparsity promotion, i.e., $\gamma_{s}=0$. In the first example, the control is active on the entire domain, whereas in the second it is active only on a subdomain.
	
	We consider the example given in \cite{glowinski2022application}:
	\begin{equation*}
		\min\limits_{u}\ J(u)=\dfrac{\gamma_{d}}{2}\diint_{Q}|y(u)-y_{d}|^{2}{\rm d}x{\rm d}t+\dfrac{1}{2}\diint_{G}|u|^{2}{\rm d}x{\rm d}t,
	\end{equation*}
	where the domain $\Omega = (0,1)^{2}$, $T=1$ and the state equation reads
	\begin{equation}\label{eq: general state eq of example 1}
		\left\{
		\begin{aligned}
			&\frac{\partial y}{\partial t}-\Delta y=f+\chi_{G}u,&\quad&\text{\,in} \quad\Omega\times(0,T),\\
			&y=0,&\quad  &\text{on}\quad\Gamma\times (0,T),\\
			&y(0)=\phi ,&\quad&\text{\,in} \quad\Omega.
		\end{aligned}
		\right.
	\end{equation}

	{\bf Example~1~Entire domain control.} Our first experiment was carried out with the  subdomain $\Omega_{sub} = \Omega$, the penalty factor $\gamma = 10^{5}$, and the control constraint
	\begin{equation*}
		\mathcal{C}=\left\{u|u\in  L^{2}(G),~-0.5 \leq u(x,t) \leq 0.5~\text{a.e.~in}~\Omega\times (0,T) \right\}.
	\end{equation*}
	Following the construction in \cite{glowinski2022application}, we let
	\begin{equation*}
		\begin{aligned}
			\bar{y}=&(1-t)\sin\pi x_{1}\sin\pi x_{2},
			&&\bar{p}=\dfrac{1}{\gamma_{d}}(1-t)\sin2\pi x_{1}\sin2\pi x_{2},\\
			\bar{u}=&\min(a,\max(b,-\gamma_{d}\bar{p})),
			&&y_{d}=\bar{y}+\frac{\partial \bar{p}}{\partial t}+\Delta \bar{p},\\
			f=&-\bar{u}+\frac{\partial \bar{y}}{\partial t}-\Delta \bar{y},
			&&\phi=\sin\pi x_{1}\sin\pi x_{2}.
		\end{aligned}
	\end{equation*}
	Then, with the source function $f$, the optimal control $u^{\ast} = \bar{u}$ and the corresponding optimal state $y^{\ast} = \bar{y}$.

	In this experiment, the parameters in Algorithm 2 were set as $\eta_{k} = 1/2^{k}$, $\beta_{0}=2$, $\beta_{1}=3$, and $\theta_{k} = \theta_{0}/2^{k}$ with $\theta_{0} = \|\sigma_{0}(u^{0})\|/2$. With a mesh size of $h = \tau = 2^{-6}$, we first observe that Algorithm~2 attains the stopping criterion after 20 outer ADMM iterations.
	Figure~\ref{fig:figure1uy}  plots the numerical solutions of $u$ and $y$, and the errors between the numerical solutions and the exact solutions. From the figure, we see that the numerical solutions approximates the exact solutions well.
	
	\begin{figure}[H]
		\centering
		
		\subfigure[Control $u$]{\includegraphics[width=0.25\linewidth]{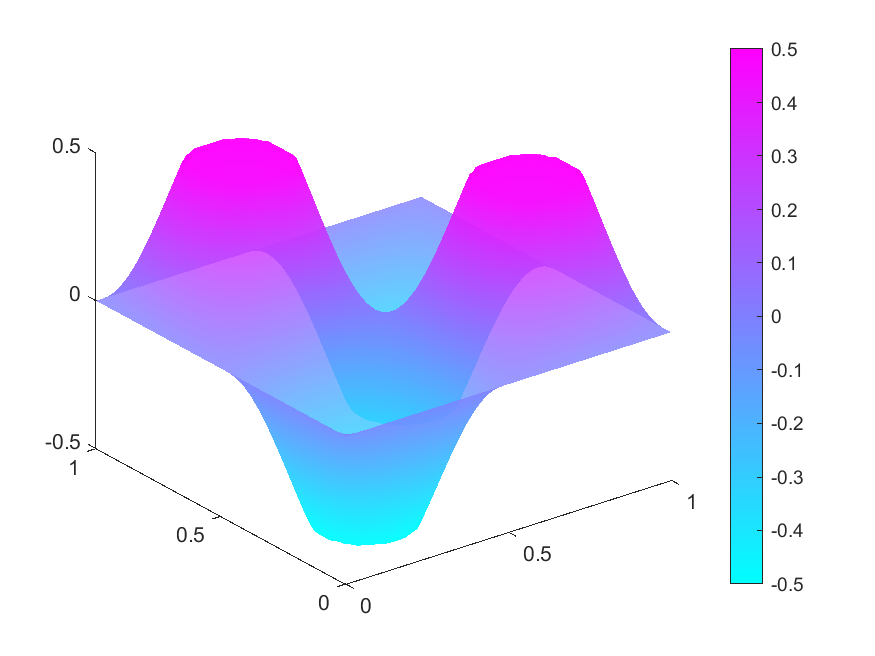}}\hfill
		\subfigure[Error $u-u^{*}$]{\includegraphics[width=0.25\linewidth]{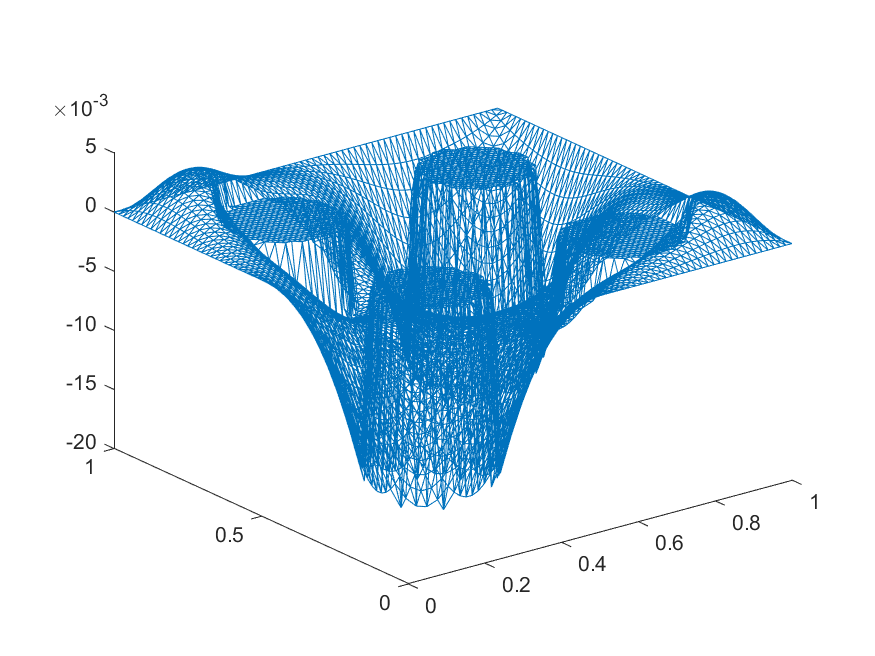}}\hfill
		\subfigure[State $y$]{\includegraphics[width=0.25\linewidth]{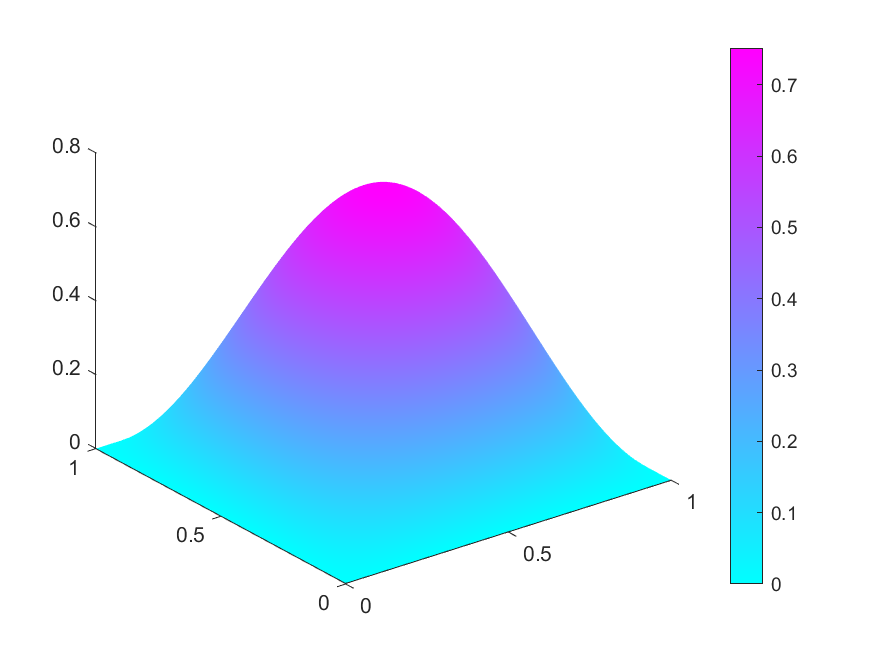}}\hfill
		\subfigure[Error $y-y^{*}$]{\includegraphics[width=0.25\linewidth]{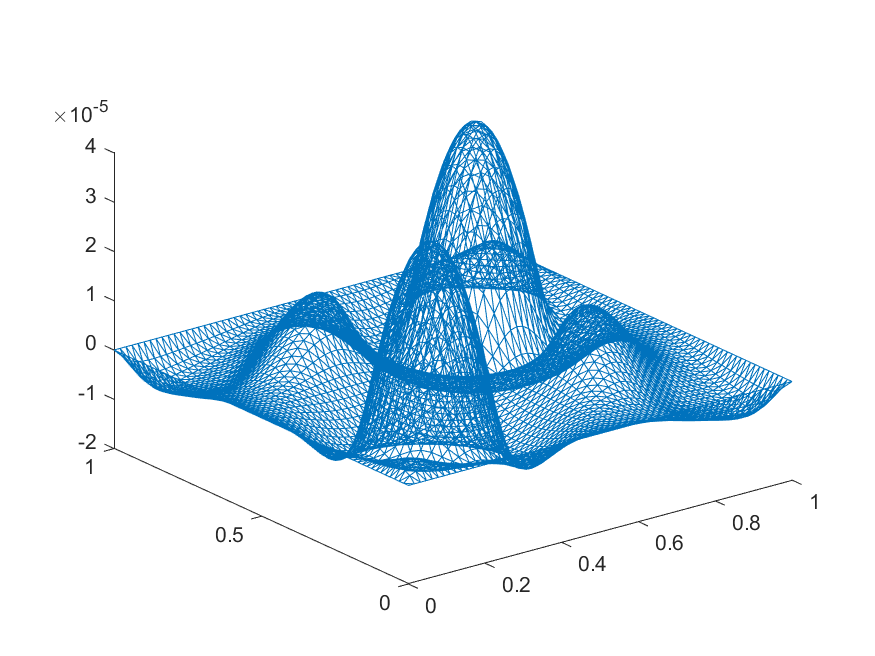}}
		
		\caption{Numerical solutions and errors of $u$ and $y$ at $t=0.25$ in Example 1.}
		\label{fig:figure1uy}
	\end{figure}

	Next, we compare the performance of our method with the classical ADMM and the PGD method \cite{troeltzsch2010optimal} on different mesh sizes. Particularly, as mentioned in Remark 2, we run Algorithm~2 with two different choices of $\theta_k$, namely $\theta_{0}/2^{k}$ and $\theta_{0}/k^{3}$. For the classical ADMM, the tolerance for the inner CG iteration is set to $10^{-6}$. For the PGD method, the step size is determined by a backtracking strategy based on the Armijo-Goldstein condition. The numerical results are reported in Table~\ref{lab:exp1}, where the symbol ``--'' indicates that the PGD method failed to converge within the allowed maximum number of iterations. From the results, we see that both InADMM and ADMM with CG (ADMMCG) outperformed PGD in terms of iteration count and running time. Notably, the proposed InADMM achieves the best performance, attaining the same accuracy with fewer inner iterations and less running time.
	
	\begin{table}[H]
		\centering
		\caption{Numerical results of InADMM, ADMM-CG and PGD in Example~1}
		\scalebox{0.7}{
			\begin{tabular}{|c|c|c|c|c|c|c|c|c|}
				\hline
				\multicolumn{1}{|c|}{Method} & $\theta_{k}$ & ($h,\tau$) & Iteration & $ \| u-u^{*}\|_{L^{2}(G)}$  & Time (s) & CG-Ave/Max & Obj ($\times 10^{-2}$) & SRD ($\times10^{-4}$)\\
				\hline	
				\multirow{6}{*}{InADMM} &
				\multirow{3}{*}{$ \dfrac{\theta_{0}}{k^{3}}$} & $(2^{-6},2^{-6})$  & $19$  &  $4.71\times 10^{-3}$  & $7.44$ & $ 5.68/8 $ &$  3.36  $& $ 7.94 $\\
				\cline{3-9}
				&  & $(2^{-7},2^{-7})$  & $19$  &  $1.19\times 10^{-3}$  & $82.95$ & $ 5.79/8 $ &$  3.40  $& $ 8.01 $\\
				\cline{3-9}
				&  & $(2^{-8},2^{-8})$  & $20$  &  $2.99\times 10^{-4}$  & $1011.66$ & $ 5.10/8 $ &$  3.41  $& $ 8.04 $\\
				\cline{2-9}
				&
				\multirow{3}{*}{$ \dfrac{\theta_{0}}{2^{k}}$} & $(2^{-6},2^{-6})$ & $20$  &  $4.71\times 10^{-3}$  & $10.11$ & $ 8.55/15 $ &$  3.36  $& $ 7.94 $\\
				\cline{3-9}
				&  & $(2^{-7},2^{-7})$  & $20$  &  $1.20\times 10^{-3}$  & $120.65$ & $ 8.65/15 $ &$  3.40  $& $ 8.02 $\\
				\cline{3-9}
				&  & $(2^{-8},2^{-8})$  & $21$  &  $2.96\times 10^{-4}$  & $1642.39$ & $ 9.14/16 $ &$  3.41  $& $ 8.04 $\\
				\hline	
				\multirow{3}{*}{ADMMCG} &
				\multirow{3}{*}{} & $(2^{-6},2^{-6})$  & $15$  &  $4.69\times 10^{-3}$  & $25.84$ & $ 31.13/52 $ &$  3.36  $& $ 7.94 $\\
				\cline{3-9}
				&  & $(2^{-7},2^{-7})$  & $15$  &  $1.18\times 10^{-3}$  & $261.27$ & $ 29.13/51 $ &$  3.40  $& $ 8.02 $\\
				\cline{3-9}
				&  & $(2^{-8},2^{-8})$  & $15$  &  $2.88\times 10^{-4}$  & $2977.55$ & $ 27.87/48 $ &$  3.41  $& $ 8.03 $\\
				\hline
				\multirow{3}{*}{PGD} &
				\multirow{3}{*}{$ $} & $(2^{-6},2^{-6})$  & $288$  &  $5.77\times 10^{-3}$  & $86.57$ & $  $ &$  3.36  $& $ 7.95 $\\
				\cline{3-9}
				&  & $(2^{-7},2^{-7})$  & $700$  &  $1.24\times 10^{-3}$  & $1907.75$ & $  $ &$  3.40  $& $ 8.02 $\\
				\cline{3-9}
				&  & $(2^{-8},2^{-8})$  & $-$  &  $-$  & $-$ & $  $ &$  -  $& $ - $\\
				\hline			
				
			\end{tabular}
		}
		\label{lab:exp1}
	\end{table}
	
	Moreover, the computational efficiency of InADMM with $\theta_k=\theta_0/k^3$ is clearly superior to that with $\theta_k=\theta_0/2^k$, which is consistent with our discussion in Remark 2. This indicates that although a more accurate solution of the subproblem can lead to a higher convergence rate of the outer iteration, it does not necessarily yield the shortest overall computing time for a given accuracy requirement. Consequently, an inexactness strategy that focuses exclusively on preserving a linear convergence rate of the outer iterations is, in general, not optimal from a computational standpoint.

	Now, we present further numerical results for Algorithm~2 with $\theta_k=\theta_0/k^3$ on different mesh sizes in Figure~\ref{fig:example1case1 obj rel}, which includes the primal residual, dual residual, state relative residual and objective value. From the figure, we see that our method performed consistently well in view of both the residuals and the objective value, and is robust with respect to the mesh sizes.
	
	\begin{figure}[H]
		\centering
		
		\subfigure[Primal residual]{\includegraphics[width=0.23\linewidth]{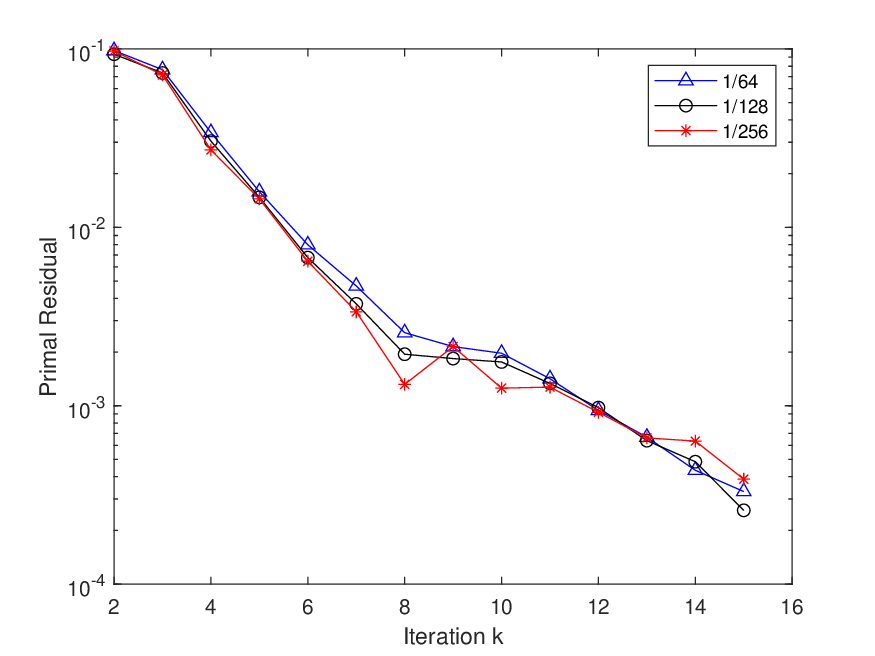}}\hfill
		\subfigure[Dual residual]{\includegraphics[width=0.23\linewidth]{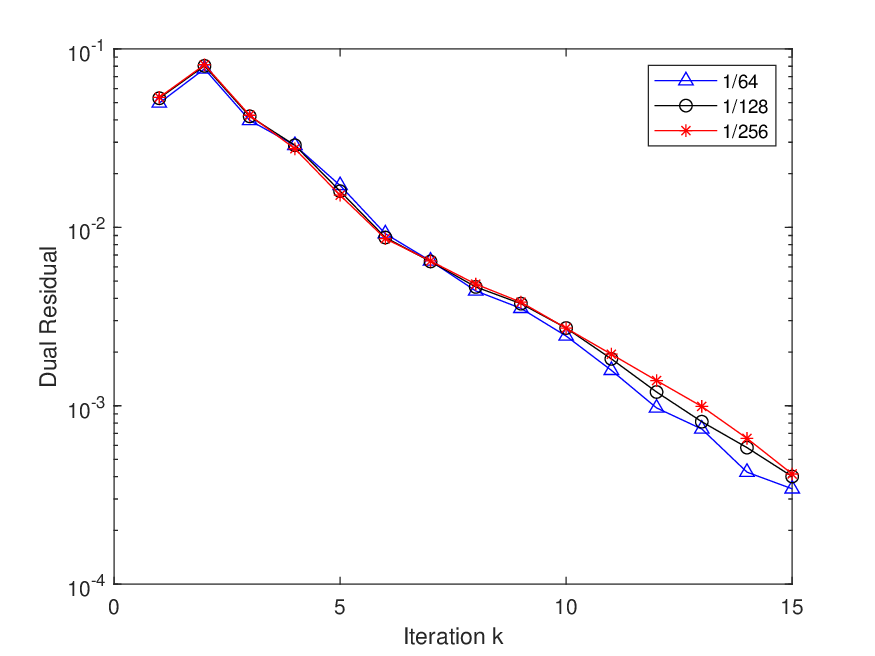}}\hfill
		\subfigure[State distance]{\includegraphics[width=0.23\linewidth]{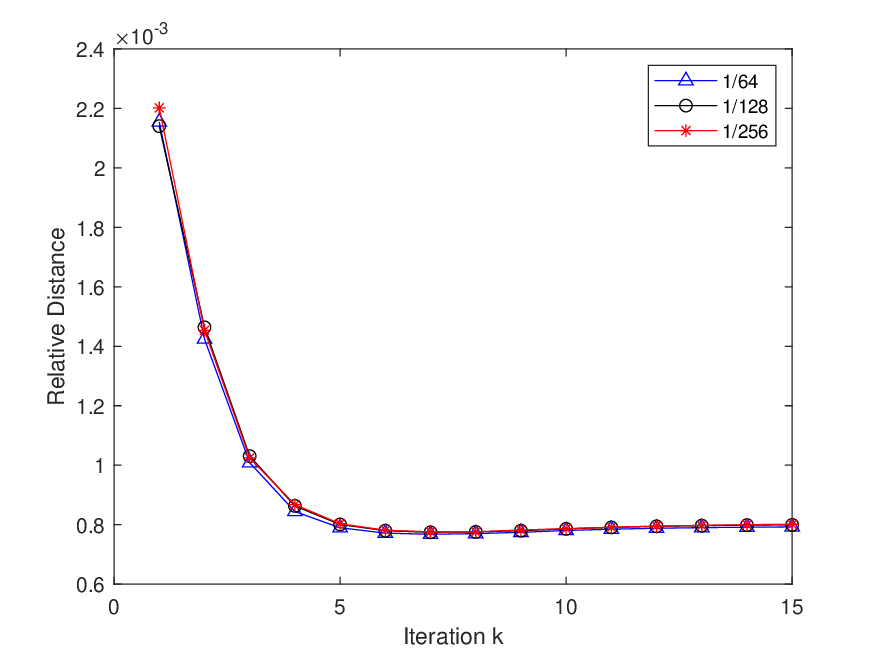}}\hfill
		\subfigure[Objective value]{\includegraphics[width=0.23\linewidth]{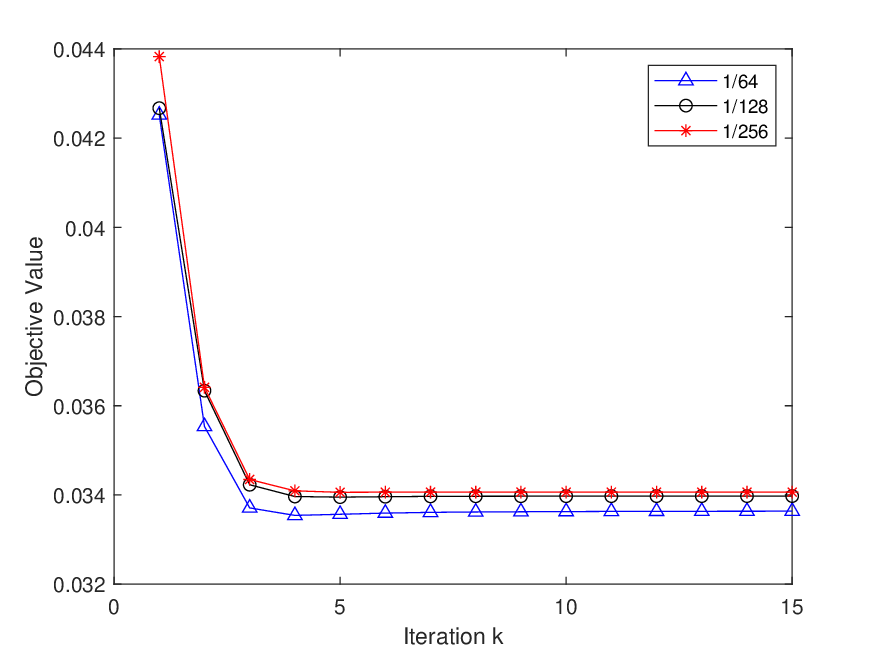}}
		
		\caption{PR, DR, SRD and Obj with different mesh in Example 1.}
		\label{fig:example1case1 obj rel}
	\end{figure}
	
	
	{\bf Example~2~Subdomain control.} 	In the second experiment, we consider the scenario where the control acts on the subdomain $\Omega_{sub} = (0,0.25)^{2}$ with the penalty parameter  $\gamma=10^{6}$ and the constraint $a=-30$ and $b=30$ on the control variable $u$. The target function is set to $y_d = \exp(t) x_1 x_2 (1-x_1)(1-x_2)$. Letting $f = -y$ in \eqref{eq: general state eq of example 1}, we obtain the following state equation:
	\begin{equation*}
		\begin{cases}
			\dfrac{\partial y}{\partial t}-\Delta y+y=\chi_{G}u &\mbox{\,in} \quad\Omega\times(0,T),\\
			y=0                                         &\mbox{on}\quad\Gamma\times (0,T),\\
			y(0)=\phi,                                  &\mbox{\,in} \quad\Omega.
		\end{cases}
	\end{equation*}
	Unlike Example 1, the presence of the reaction term and of the control localized on the subdomain makes an explicit solution unavailable.
	
	
	We ran InADMM with the parameter settings $\beta_{0}=\beta_{1}=8$ and two inexact criteria $\theta_k = \theta_0/k^3$ and $\theta_0/(1.4)^k$, and compare its performance with that of ADMMCG and PGD, which are run with the same settings as in Example~1. The results are reported in Table~\ref{lab:exp2}. Again, the results show that our method achieves the best performance, with a significantly lower computational cost while delivering comparable solution accuracy. Between the two inexact criteria, the geometric decay condition shows a clear advantage in this example. However, as discussed in Remark 2, which of the two inexact criteria is preferable ultimately depends on the specific problem under consideration. We also plot the numerical solution (with $\theta_k=\theta_0/(1.4)^{k}$) of the control $u$ and the state $y$ on the mesh with $h=\tau=2^{-6}$ in Figure~4.
	
	\begin{table}[H]
		\centering
		\caption{Numerical results of InADMM, ADMMCG and PGD in Example~2}
		\scalebox{0.7}
		{
			\begin{tabular}{|c|c|c|c|c|c|c|c|}
				\hline
				\multicolumn{1}{|c|}{Method} & $\theta_{k}$ & ($h,\tau$) & Iteration & Time (s) & CG-Ave/Max & Obj ($\times 10^{3}$) & SRD ($\times10^{-1}$)\\
				\hline	
				\multirow{6}{*}{InADMM} &
				\multirow{3}{*}{$ \theta_{0}/k^{3}$} & $(2^{-6},2^{-6})$  & $34$  &  $6.83$  & $4.56/8$ & $ 1.19$ &$  8.16  $\\
				\cline{3-8}
				&  & $(2^{-7},2^{-7})$  & $34$  &  $60.43$  & $3.94/6$ & $ 1.21 $ &$  8.22 $\\
				\cline{3-8}
				&  & $(2^{-8},2^{-8})$  & $35$  &  $945.93$  & $3.66/5$ & $ 1.22 $ &$  8.25  $\\
				\cline{2-8}
				&
				\multirow{3}{*}{$ \theta_{0}/{(1.4)^{k}}$} &  $(2^{-6},2^{-6})$ & $38$  &  $4.89$  & $2.68/4$ & $ 1.19 $ &$ 8.16  $\\
				\cline{3-8}
				&  & $(2^{-7},2^{-7})$  & $39$  &  $51.26$  & $2.69/5$ & $ 1.21 $ &$  8.22 $\\
				\cline{3-8}
				&  & $(2^{-8},2^{-8})$  & $39$  &  $751.21$  & $2.64/5$ & $ 1.22 $ &$  8.25  $\\
				\hline	
				\multirow{3}{*}{ADMMCG} &
				\multirow{3}{*}{} &  $(2^{-6},2^{-6})$  & $35$  &  $38.65$  & $31.46/79$ & $ 1.19 $ &$  8.16  $\\
				\cline{3-8}
				&  & $(2^{-7},2^{-7})$  & $35$  &  $356.724$  & $22.63/38$ & $ 1.21 $ &$  8.22 $\\
				\cline{3-8}
				&  & $(2^{-8},2^{-8})$  & $35$  &  $4370.60$  & $21.03/30$ & $ 1.22 $ &$  8.25 $\\
				\hline
				\multirow{3}{*}{PGD} &
				\multirow{3}{*}{$ $} &  $(2^{-6},2^{-6})$  & $405$  &  $128.63$  & $ $ & $ 1.19 $ &$  8.18  $\\
				\cline{3-8}
				&  & $(2^{-7},2^{-7})$  & $488$  &  $1476.04 $  & $ $ & $1.21  $ &$  8.24  $ \\
				\cline{3-8}
				&  & $(2^{-8},2^{-8})$  & $-$  &  $-$  & $ $ & $ - $ &  $-$  \\
				\hline			
			\end{tabular}
		}
		\label{lab:exp2}
	\end{table}

	\begin{figure}[H]
		\centering
		
		\subfigure[Control $u$]{\includegraphics[width=0.25\linewidth]{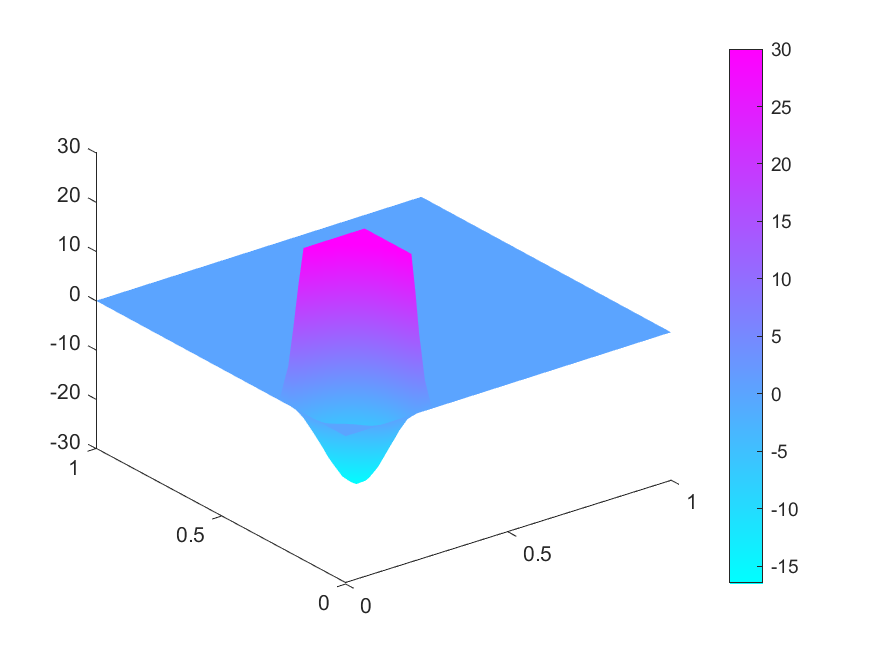}} \hspace{10mm}
		\subfigure[State $y$]{\includegraphics[width=0.25\linewidth]{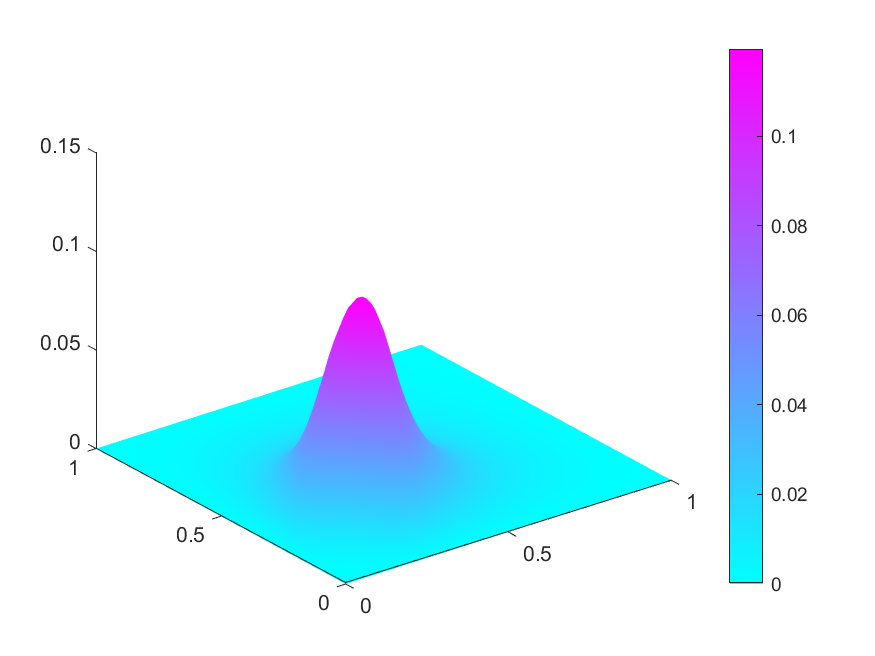}}
		
		\caption{Numerical solutions of the control $u$ and the state $y$ at $t=0.5$ in Example 2.}
		\label{fig: numerical solutions of exp22}
	\end{figure}

	In addition,  we test our method with $\theta_k=\theta_0/(1.4)^k$ on different mesh sizes and present the primal residual, the dual residual, the relative state error, and the objective value in Figure~\ref{fig:example1case2 obj rel}, further confirming that Algorithm 2 converges fast and its performance remains unaffected by mesh sizes.
	
	\begin{figure}[H]
		\centering
		
		\subfigure[Primal residual]{\includegraphics[width=0.23\linewidth]{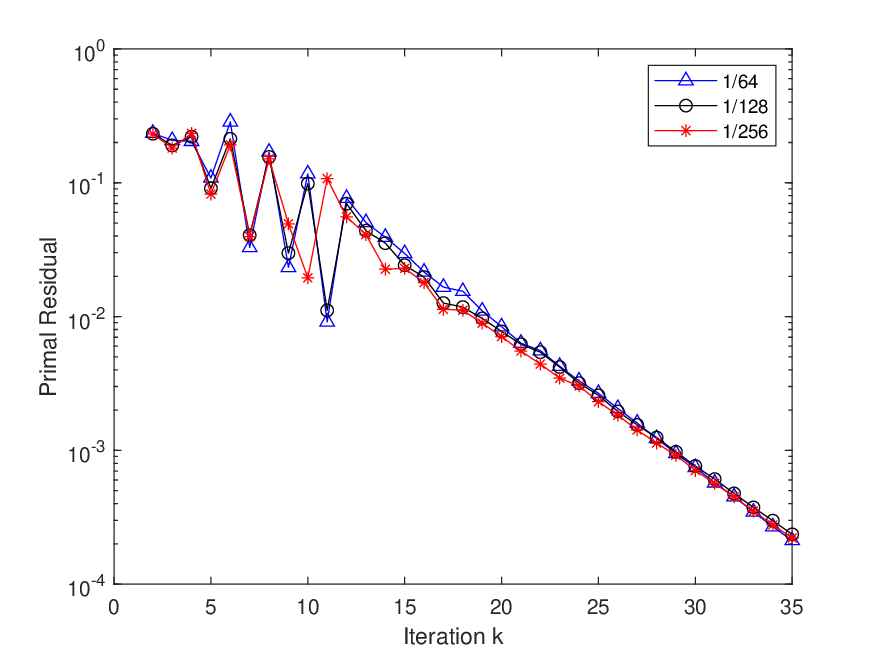}}\hfill
		\subfigure[Dual residual]{\includegraphics[width=0.23\linewidth]{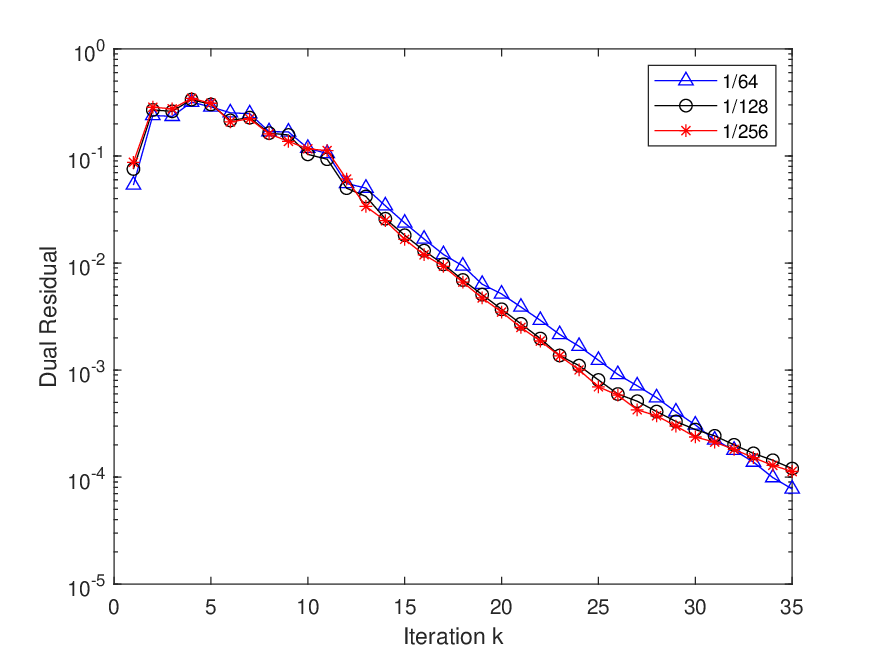}}\hfill
		\subfigure[State distance]{\includegraphics[width=0.23\linewidth]{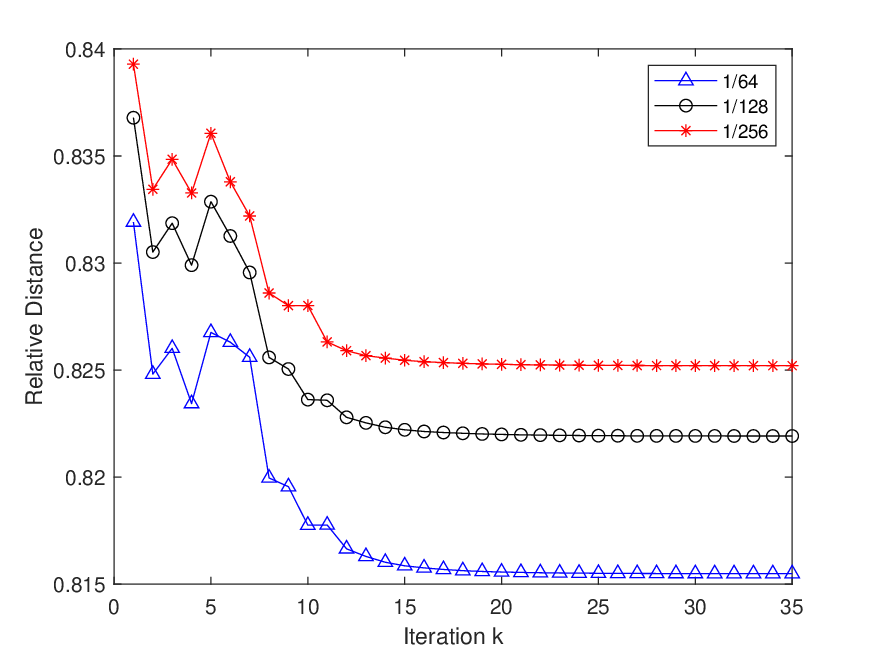}}\hfill
		\subfigure[Objective value]{\includegraphics[width=0.23\linewidth]{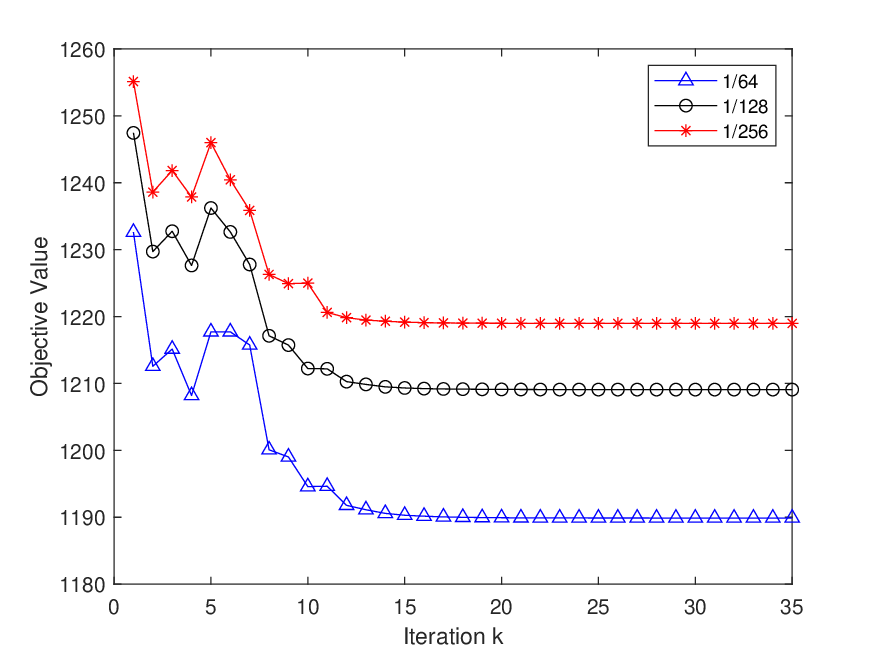}}
		
		\caption{PR, DR, SRD and Obj with different mesh in Example 2.}
		\label{fig:example1case2 obj rel}
	\end{figure}

	
	\subsection{Numerical results for the sparse case}\label{sec:sparse case}
	
	In this subsection, we test our method on the problems with sparse control inputs, i.e., $\gamma_{s} > 0$, including examples where the control is active on the entire domain or only on a subdomain.
	
	We consider the following problem:
	\begin{equation*}
		\min\limits_{u}\ J(u)=\dfrac{\gamma_{d}}{2}\diint_{Q}|y(u)-y_{d}|^{2}{\rm d}x{\rm d}t+\dfrac{1}{2}\diint_{G}|u|^{2}{\rm d}x{\rm d}t+\gamma_{s} \diint_{G}|u|{\rm d}x{\rm d}t,
	\end{equation*}
	where the spatial domain is $\Omega = (0,1)^{2}$, the time horizon is $T=1$, and the state $y$ satisfies the parabolic equation:
	\begin{equation}\label{eq: general state eq of example 3}
		\left\{
		\begin{aligned}
			&\frac{\partial y}{\partial t}-\Delta y=f+\chi_{G}u,&\quad&\text{\,in} \quad\Omega\times(0,T),\\
			&y=0,&\quad  &\text{on}\quad\Gamma\times (0,T),\\
			&y(0)=\phi,&\quad&\text{\,in} \quad\Omega.
		\end{aligned}
		\right.
	\end{equation}
	
	{\bf Example~3~Entire domain control.}
	We adopt the same settings of $\gamma_{d}$, $f$, $\phi$, $y_{d}$, and control constraint set $\mathcal{C}$ as in Example 1. All experiments are conducted with a mesh size $h=\tau=2^{-6}$, using the inexact criterion  $\theta_{k}=\theta_{0}/k^3$ or $\theta_k=\theta_0/2^k$ with $\theta_{0} = \|\sigma_{0}(u^{0})\|/2$.
	
	For different values of the sparsity parameter
	$\gamma_s$, Figure \ref{fig:L1example1 u solution} and Figure \ref{fig:L1example1 u solution under} plot the three-dimensional profiles and the corresponding top views of the control solutions with $\theta_k=\theta_0/2^k$, respectively. Clearly, a larger value of $\gamma_s$ yields a sparser optimal distributed control.
	
	\begin{figure}[H]
		\centering
		
		\subfigure[$\gamma_{s}=0.1$]{\includegraphics[width=0.25\linewidth]{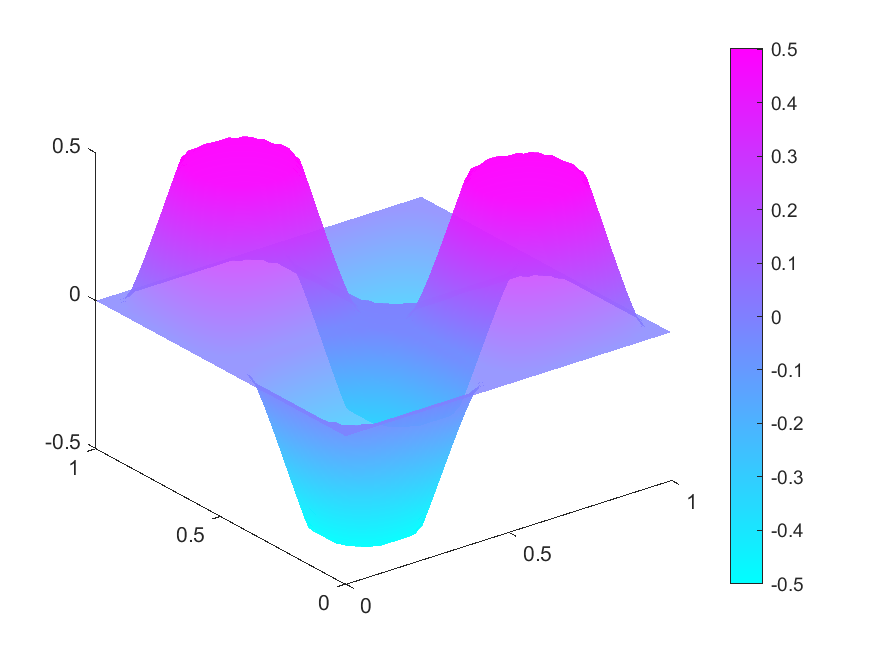}}\hfill
		\subfigure[$\gamma_{s}=0.5$]{\includegraphics[width=0.25\linewidth]{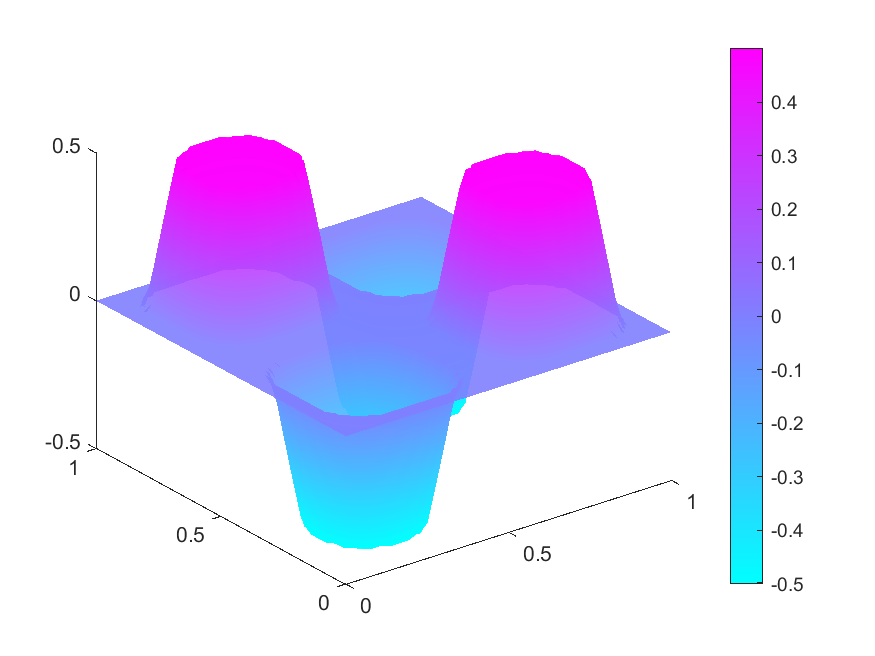}}\hfill
		\subfigure[$\gamma_{s}=5$]{\includegraphics[width=0.25\linewidth]{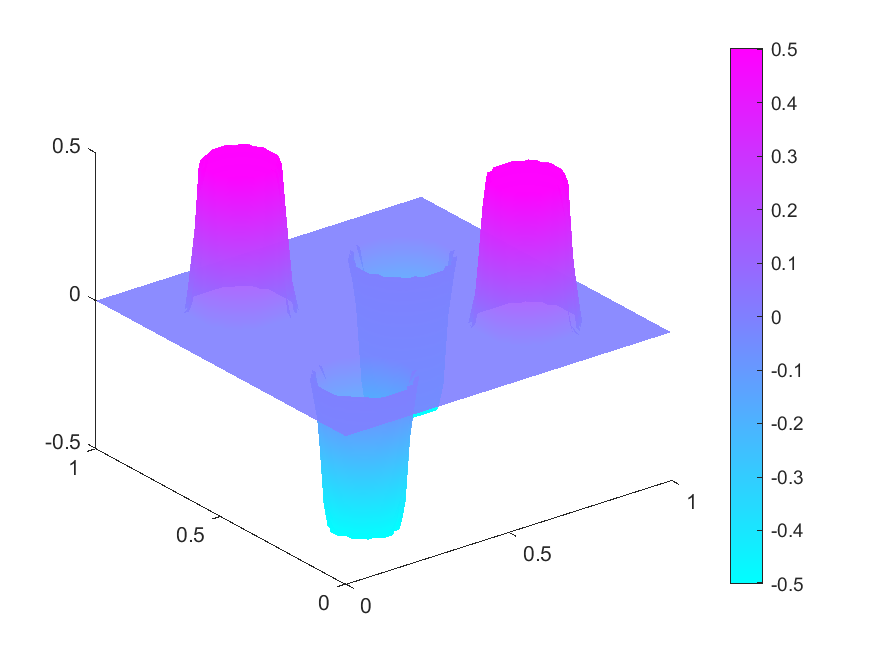}}\hfill
		\subfigure[$\gamma_{s}=10$]{\includegraphics[width=0.25\linewidth]{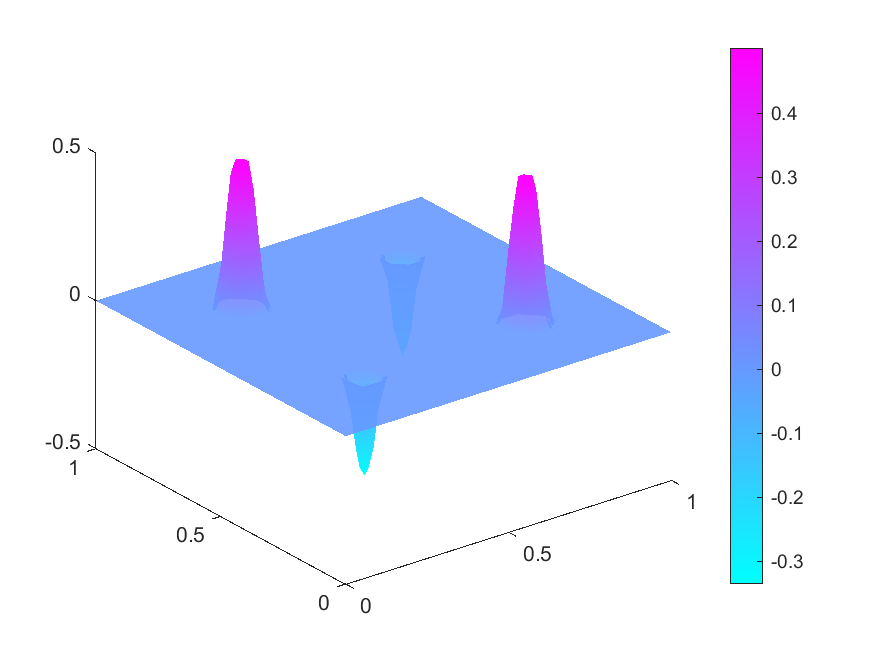}}
		
		\caption{Numerical solutions of $u$ at $t=0.25$ with different $\gamma_{s}$ in Example 3.}
		\label{fig:L1example1 u solution}
	\end{figure}
	
	\begin{figure}[H]
		\centering
		
		\subfigure[$\gamma_{s}=0.1$]{\includegraphics[width=0.23\linewidth]{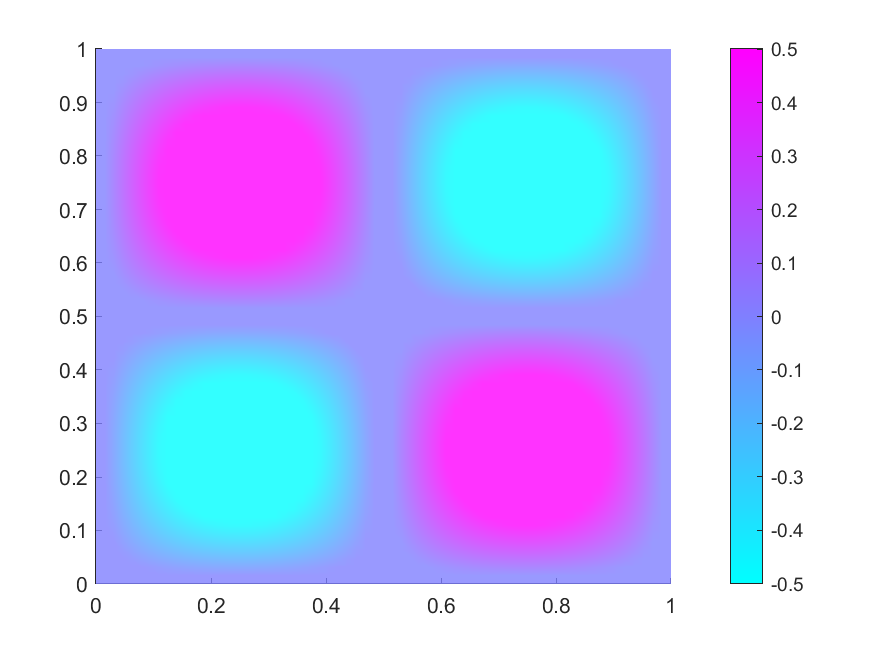}}\hfill
		\subfigure[$\gamma_{s}=0.5$]{\includegraphics[width=0.23\linewidth]{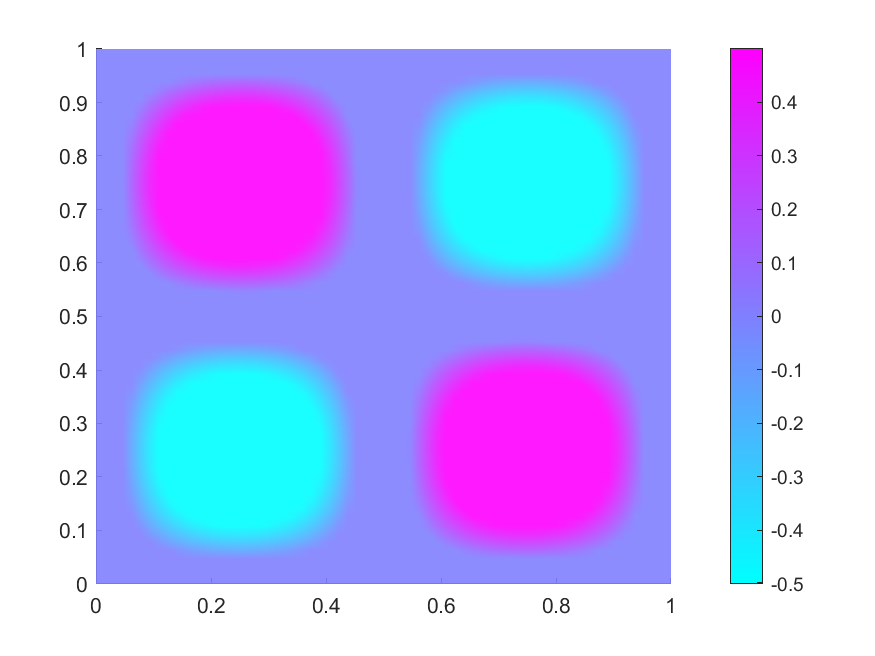}}\hfill
		\subfigure[$\gamma_{s}=5$]{\includegraphics[width=0.23\linewidth]{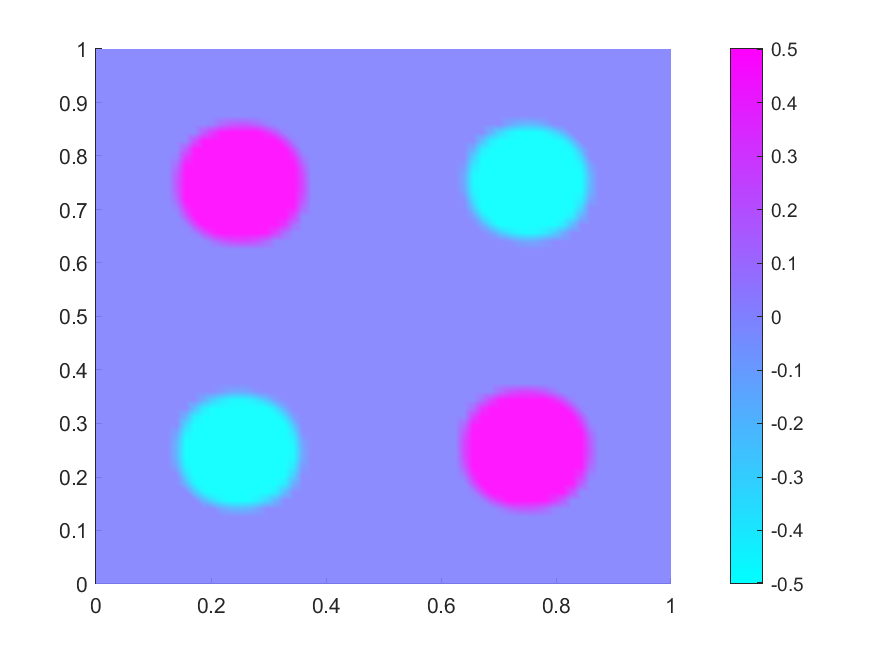}}\hfill
		\subfigure[$\gamma_{s}=10$]{\includegraphics[width=0.23\linewidth]{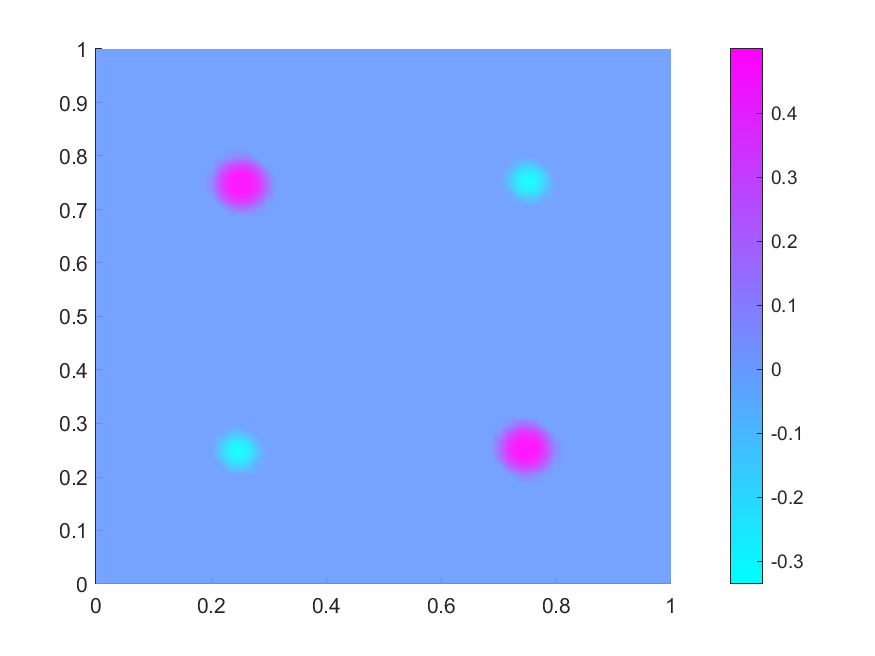}}
		
		\caption{The top-down view of $u$ at $t=0.25$ with different $\gamma_{s}$ in Example 3.}
		\label{fig:L1example1 u solution under}
	\end{figure}
	
	In addition, we compared InADMM and ADMMCG with different values of $\beta_0$ and $\beta_1$. The results summarized in Table~\ref{tab:exp3}. As we can see, varying $\beta_0$ and $\beta_1$ does not affect the solution quality and only influence the computational time. Moreover, the inexact strategy also proves highly efficient for the sparse case, achieving comparable accuracy and significantly reducing the computational time. Similar to Example~1, the inexact criterion with $\theta_{k}=\theta_{0}/k^3$ performs better than that with $\theta_{k}=\theta_{0}/2^k$.
	
	\begin{table}[H]
		\centering
		\caption{Numerical results of InADMM and ADMMCG in Example~3.}
		\scalebox{0.7}{
			\begin{tabular}{|c|c|c|c|c|c|c|c|c|}
				\hline
				Method & $\theta_{k}$ & $(\beta_{0},\beta_{1})$ & $\gamma_{s}$ & Iteration & Time (s) & CG-Ave/Max & Obj & SRD \\
				\hline
				\multirow{12}{*}{InADMM}
				& \multicolumn{1}{c|}{\multirow{4}{*}{$\theta_0/k^3$}} & \multicolumn{1}{c|}{\multirow{2}{*}{$(2,3)$}} &  $0.1$ & $18$ & $7.74$ & $5.50/9$ & $3.43\times 10^{-2}$ & $1.02\times 10^{-3}$ \\
				\cline{4-9}
				& & & $0.5$ & $25$ & $13.64$ & $5.72/8$ & $4.22\times 10^{-2}$ & $1.88\times 10^{-3}$ \\
				\cline{3-9}
				& &\multicolumn{1}{c|}{\multirow{2}{*}{$(10,10)$}} &  $5$ & $65$ & $26.88$ & $5.89/7$ & $2.59\times 10^{-1}$ & $7.74\times 10^{-3}$ \\
				\cline{4-9}
				& & & $10$ & $111$ & $46.86$ & $5.53/7$ & $5.06\times 10^{-1}$ & $1.10\times 10^{-2}$ \\
				\cline{2-9}
				& \multicolumn{1}{c|}{\multirow{4}{*}{$\theta_0/2^k$}} & \multicolumn{1}{c|}{\multirow{2}{*}{$(2,3)$}} &  $0.1$ & $21$ & $17.06$ & $9.29/19$ & $3.43\times 10^{-2}$ & $1.02\times 10^{-3}$ \\
				\cline{4-9}
				& & & $0.5$ & $23$ & $21.51$ & $10.83/21$ & $4.22\times 10^{-2}$ & $1.88\times 10^{-3}$ \\
				\cline{3-9}
				& & \multicolumn{1}{c|}{\multirow{2}{*}{$(10,10)$}} &  $5$ & $66$ & $61.54$ & $11.02/17$ & $2.59\times 10^{-1}$ & $7.74\times 10^{-3}$ \\
				\cline{4-9}
				& & & $10$ & $111$ & $83.86$ & $8.45/16$ & $5.06\times 10^{-1}$ & $1.10\times 10^{-2}$ \\
				\hline
				\multirow{4}{*}{ADMMCG} & \multirow{4}{*}{$ $}& \multirow{4}{*}{$ $} & $0.1$ & $20$ & $42.47$ & $26.00/46$ & $3.43\times 10^{-2}$ & $1.02\times 10^{-3}$ \\
				\cline{4-9}
				& & & $0.5$ & $22$ & $49.96$ & $28.23/46$ & $4.22\times 10^{-2}$ & $1.88\times 10^{-3}$ \\
				\cline{4-9}
				& & & $5$ & $61$ & $85.43$ & $16.36/19$ & $2.59\times 10^{-1}$ & $7.74\times 10^{-3}$ \\
				\cline{4-9}
				& & & $10$ & $108$ & $137.72$ & $14.84/29$ & $5.06\times 10^{-1}$ & $1.10\times 10^{-2}$ \\
				\hline
			\end{tabular}
		}
		\label{tab:exp3}	
	\end{table}

	{\bf Example~4~Subdomain control.}
	In this example, we consider the case that the control takes action on the subdomain $\Omega_{sub} = (0,0.25)^{2}$. The parameter $\gamma_d$, data $f$, $\phi$, $y_d$ and control constraint set $\mathcal{C}$ are set as in Example 2. All experiments are conducted with a mesh size  $h = \tau = 2^{-6}$ and the inexact criterion $\theta_k = \theta_0 / k^3$ or $\theta_k=\theta_0/(1.3)^k$.
	
	We run InADMM for different values of the sparsity parameter $\gamma_s$ and visualize the corresponding numerical control solutions with $\theta_k=\theta_0/(1.3)^k$ in Figure \ref{fig:L1example2 u solution} and Figure \ref{fig:L1example2 u solution under}, which show that larger $\gamma_s$ leads to sparser controls on the subdomain. The numerical results of InADMM and ADMMCG for this example are reported in Table~\ref{tab:exp4}.  For all tested parameters $\beta_{0}$ and $\beta_{1}$, InADMM consistently requires less computational time than ADMMCG to achieve solutions of comparable quality. Moreover, the inexact criterion  $\theta_k = \theta_0 / (1.3)^k$ outperforms that with $\theta_k = \theta_0 / k^3$ in this example.
	
	\begin{figure}[H]
		\centering
		
		\subfigure[$\gamma_{s}=1$]{\includegraphics[width=0.25\linewidth]{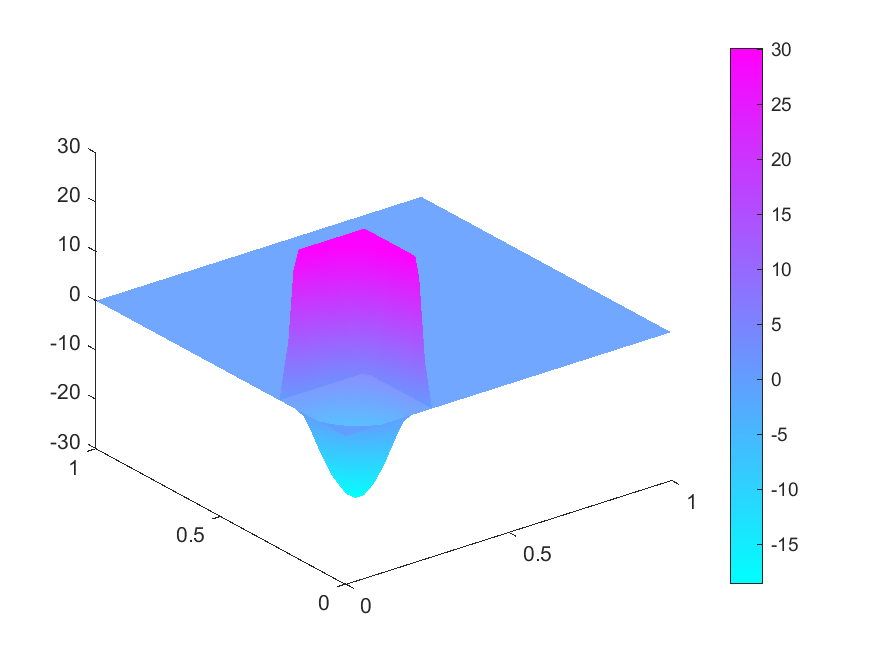}}\hfill
		\subfigure[$\gamma_{s}=10$]{\includegraphics[width=0.25\linewidth]{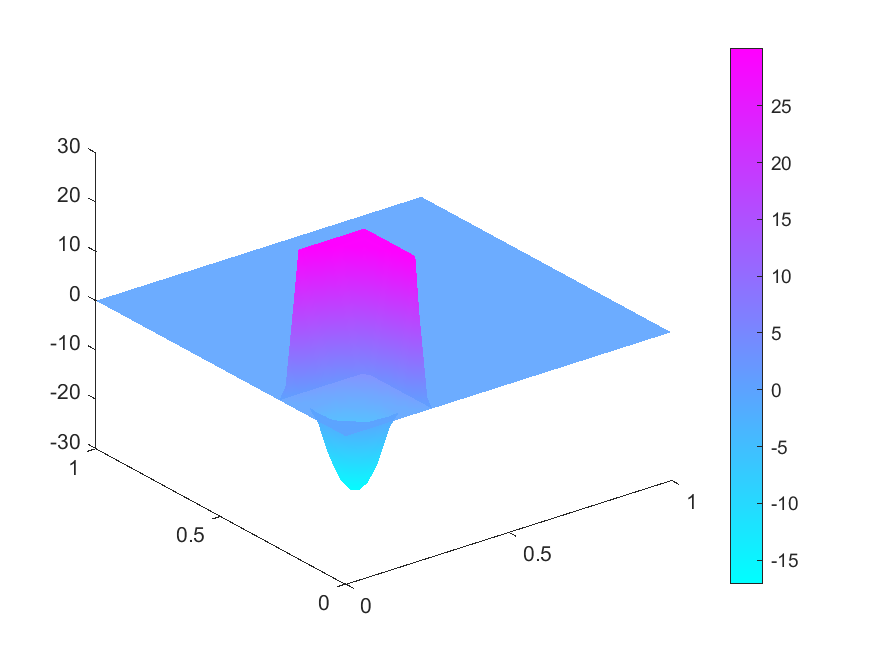}}\hfill
		\subfigure[$\gamma_{s}=50$]{\includegraphics[width=0.25\linewidth]{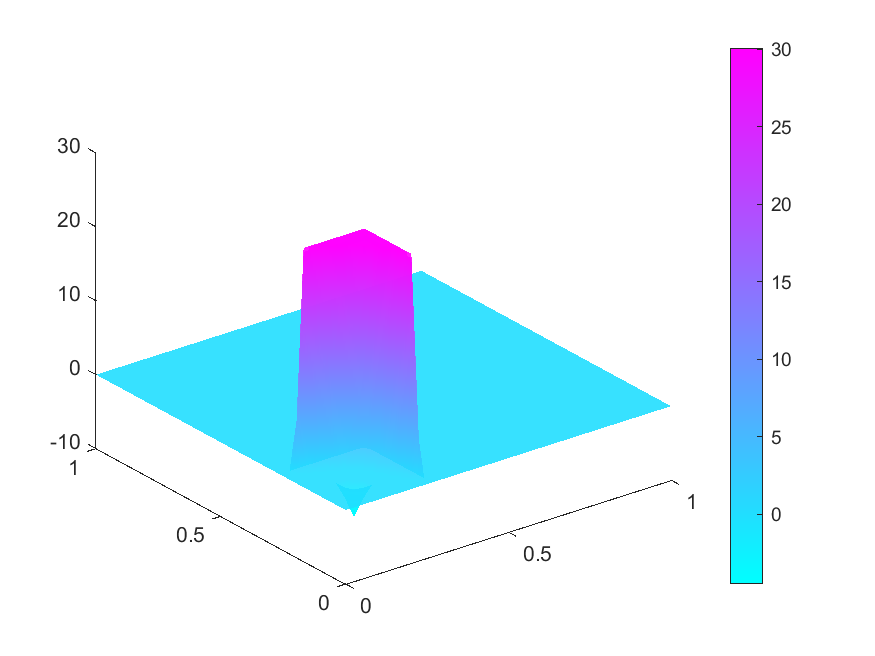}}\hfill
		\subfigure[$\gamma_{s}=500$]{\includegraphics[width=0.25\linewidth]{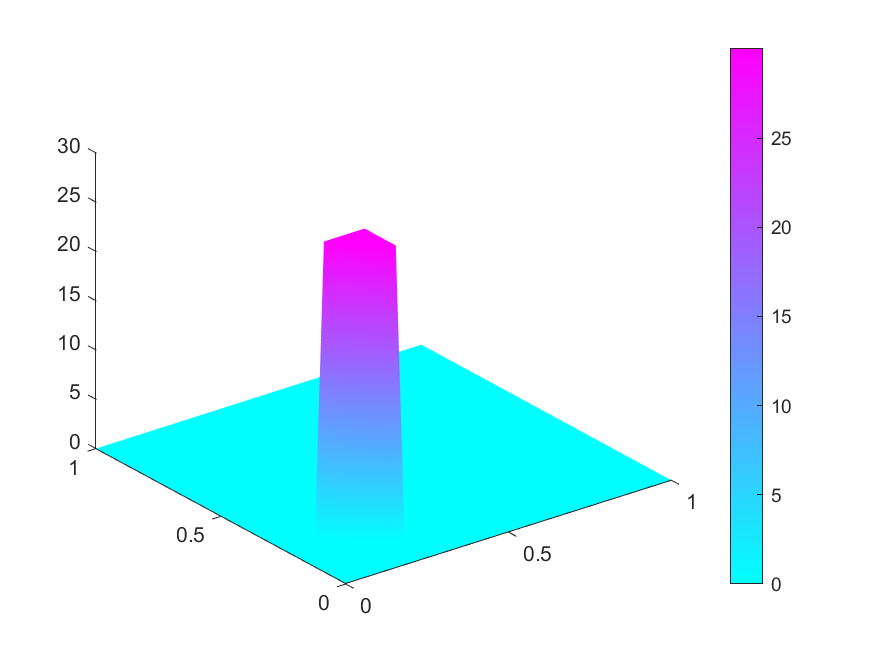}}
		
		\caption{Numerical solutions of $u$ at $t=0.5$ with different $\gamma_{s}$ in Example 4.}
		\label{fig:L1example2 u solution}
	\end{figure}
	
	\begin{figure}[H]
		\centering
		
		\subfigure[$\gamma_{s}=1$]{\includegraphics[width=0.23\linewidth]{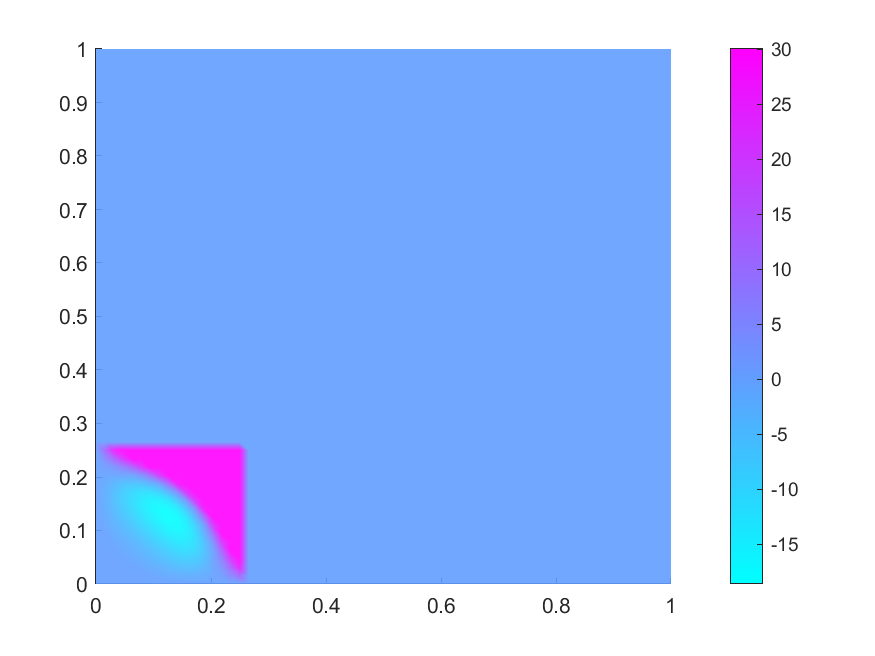}}\hfill
		\subfigure[$\gamma_{s}=10$]{\includegraphics[width=0.23\linewidth]{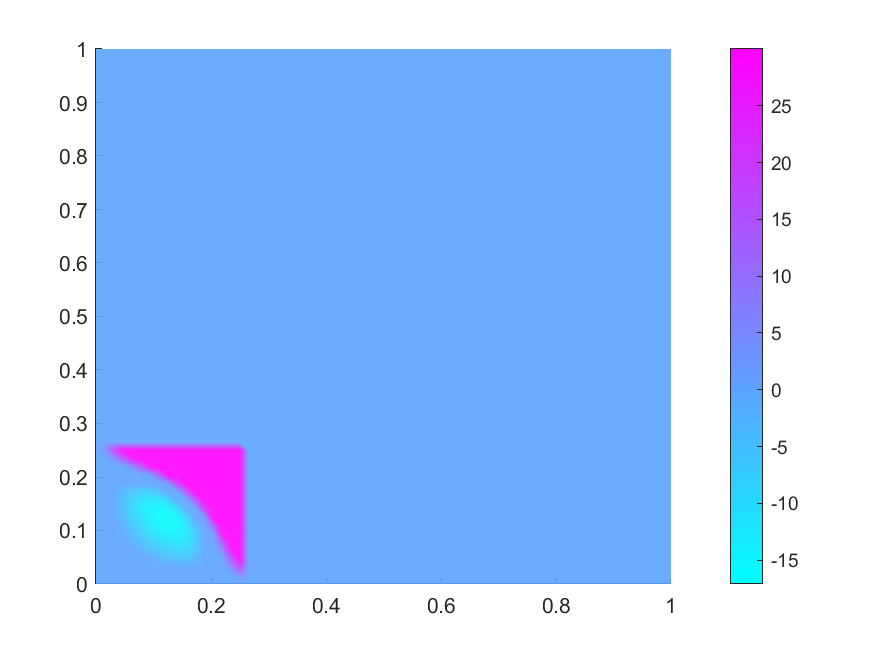}}\hfill
		\subfigure[$\gamma_{s}=50$]{\includegraphics[width=0.23\linewidth]{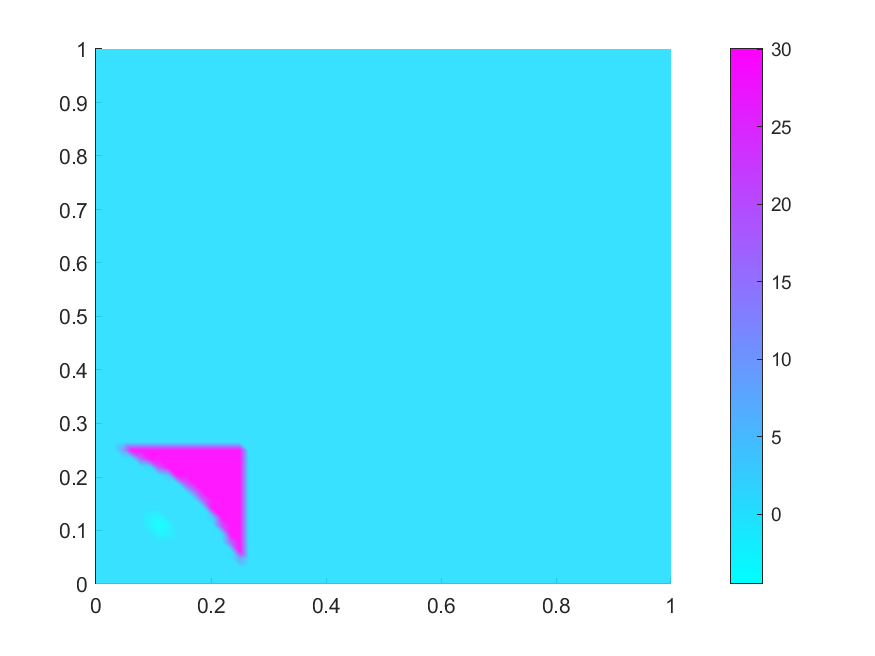}}\hfill
		\subfigure[$\gamma_{s}=500$]{\includegraphics[width=0.23\linewidth]{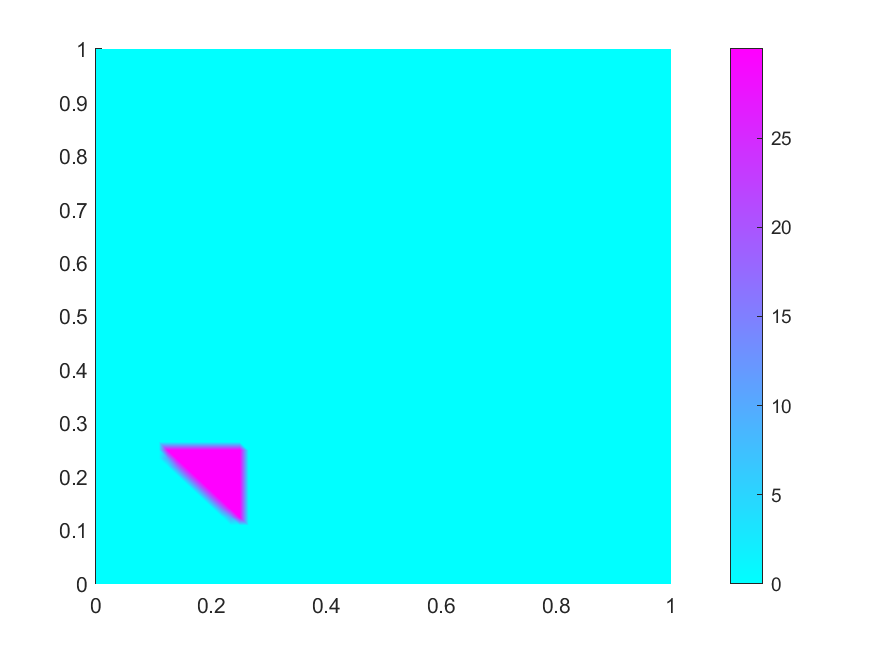}}
		
		\caption{The top-down view of $u$ at $t=0.5$ with different $\gamma_{s}$ in Example 4.}
		\label{fig:L1example2 u solution under}
	\end{figure}

	\begin{table}[H]
		\centering
		\caption{Numerical results of InADMM and ADMMCG in Example~4.}
		\scalebox{0.7}{
			\begin{tabular}{|c|c|c|c|c|c|c|c|c|}
				\hline
				Method & $\theta_{k}$ & $(\beta_{0},\beta_{1})$ & $\gamma_{s}$ & Iteration & Time (s) & CG-Ave/Max & Obj ($\times 10^{3}$) & SRD ($\times 10^{-1}$) \\
				\hline
				\multirow{12}{*}{InADMM}
				& \multicolumn{1}{c|}{\multirow{4}{*}{$\theta_0/k^3$}} & \multicolumn{1}{c|}{\multirow{2}{*}{$(8,8)$}} &  $1$ & $34$ & $7.02$ & $4.62/8$ & $1.19$ & $8.16$ \\
				\cline{4-9}
				& & & $10$ & $33$ & $6.76$ & $4.70/8$ & $1.19$ & $8.16$ \\
				\cline{3-9}
				& & \multicolumn{1}{c|}{\multirow{2}{*}{$(10,10)$}} & $50$ & $37$ & $6.15$ & $3.84/6$ & $1.20$ & $8.19$ \\
				\cline{4-9}
				& & & $500$ & $46$ & $6.72$ & $3.28/4$ & $1.25$ & $8.36$ \\
				\cline{2-9}
				& \multicolumn{1}{c|}{\multirow{4}{*}{$\theta_0/(1.3)^k$}} & \multicolumn{1}{c|}{\multirow{2}{*}{$(8,8)$}} &  $1$ & $44$ & $4.65 $ & $2.16/4$ & $1.19$ & $8.16$ \\
				\cline{4-9}
				& & & $10$ & $46$ & $5.26$ & $2.17/4$ & $1.19$ & $8.16$ \\
				\cline{3-9}
				& & \multicolumn{1}{c|}{\multirow{2}{*}{$(10,10)$}} & $50$ & $49$ & $5.27 $ & $2.18/4$ & $1.20$ & $8.19$ \\
				\cline{4-9}
				& & & $500$ & $57$ & $6.69 $ & $2.44/5$ & $1.25$ & $8.36$ \\
				\hline
				\multirow{4}{*}{ADMMCG} & \multirow{4}{*}{$ $}& \multirow{4}{*}{$ $} &  $1$ & $33$ & $26.82$ & $20.00/46$ & $1.19$ & $8.16$ \\
				\cline{4-9}
				& & & $10$ & $32$ & $25.73$ & $20.13/46$ & $1.19$ & $8.16$ \\
				\cline{4-9}
				& & & $50$ & $30$ & $23.09$ & $19.13/41$ & $1.20$ & $8.19$ \\
				\cline{4-9}
				& & & $500$ & $35$ & $25.39$ & $17.97/31$ & $1.25$ & $8.36$ \\
				\hline
			\end{tabular}
		}
		\label{tab:exp4}	
	\end{table}

	\section{Conclusions}\label{sec:conclusion}
	We have proposed an inexact algorithmic framework of the alternating direction method of multipliers for solving constrained parabolic optimal distributed control problem. At each iteration, the nonsmooth part of objective and control constraint are untied from the parabolic optimal control problem, and the resulting parabolic equation constrained subproblem is solved inexactly using our efficient strategy. With the flexible parameter choices, we have established the globally strong convergence and a linear convergence rate of the proposed method. A key practical advantage of our inexact framework is that its implementation  undemanding, requiring only a few internal iterations per outer step. Numerical experiments have illustrated the validness and competitive efficiency of our methods.

	\section*{Declarations}
	{\bf Funding~}The work was supported by National Science and Technology Major Project for Deep Earth probe and Mineral Resources Exploration-National Science and Technology Major Project under Grants 2025ZD1008500 and 2025ZD1008503, Research Project of the Education Department of Jilin Province under Grant JJKH20262221BS,  and Graduate Innovation Fund of Jilin University under Grants 2024KC036, 2025CX090 and 2025CX097.
	
	\vskip4mm
	\noindent
	{\bf Data availibility~} The work does not analyse or generate any datasets.
	
	\vskip4mm
	\noindent
	{\bf Competing interests~}The authors have no financial or proprietary interests in any material discussed in this paper.


	%
	%

	\bibliographystyle{plain}      
	\bibliography{ref}   

\begin{thebibliography}{10}

\bibitem{baraldi2022proximal}
Robert~J. Baraldi and Drew~P. Kouri.
\newblock A proximal trust-region method for nonsmooth optimization with
  inexact function and gradient evaluations.
\newblock {\em Mathematical Programming}, 201(1):559--598, 2022.

\bibitem{bergounioux1999primal}
Ma{\"\i}tine Bergounioux, Kazufumi Ito, and Karl Kunisch.
\newblock Primal-dual strategy for constrained optimal control problems.
\newblock {\em SIAM Journal on Control and Optimization}, 37(4):1176--1194,
  1999.

\bibitem{bersetche2025fractional}
Francisco Bersetche, Francisco Fuica, Enrique Ot{\'a}rola, and Daniel Quero.
\newblock Fractional, semilinear, and sparse optimal control: A priori error
  bounds.
\newblock {\em Applied Mathematics and Optimization}, 91:20, 2025.

\bibitem{borzi2003multigrid}
Alfio Borz{\`\i}.
\newblock Multigrid methods for parabolic distributed optimal control problems.
\newblock {\em Journal of Computational and Applied Mathematics},
  157(2):365--382, 2003.

\bibitem{boyd2004convex}
Stephen Boyd and Lieven Vandenberghe.
\newblock {\em Convex Optimization}.
\newblock Cambridge University Press, 2004.

\bibitem{casas2024convergence}
Eduardo Casas and Mariano Mateos.
\newblock Convergence analysis of the semismooth {Newton} method for sparse
  control problems governed by semilinear elliptic equations.
\newblock {\em SIAM Journal on Control and Optimization}, 62(6):3076--3090,
  2024.

\bibitem{chang2022sparse}
Lili Chang, Wei Gong, Zhen Jin, and Gui-Quan Sun.
\newblock Sparse optimal control of pattern formations for an {SIR}
  reaction-diffusion epidemic model.
\newblock {\em SIAM Journal on Applied Mathematics}, 82(5):1764--1790, 2022.

\bibitem{colli2022optimal}
Pierluigi Colli, Andrea Signori, and J{\"u}rgen Sprekels.
\newblock Optimal control problems with sparsity for tumor growth models
  involving variational inequalities.
\newblock {\em Journal of Optimization Theory and Applications}, 194(1):25--58,
  2022.

\bibitem{de2015numerical}
Juan~Carlos De~los Reyes.
\newblock {\em Numerical {PDE}-Constrained Optimization}.
\newblock Springer, Cham, Switzerland, 2015.

\bibitem{gander2016schwarz}
Martin~J. Gander and Felix Kwok.
\newblock Schwarz methods for the time-parallel solution of parabolic control
  problems.
\newblock In {\em Domain Decomposition Methods in Science and Engineering
  XXII}, pages 207--216. Springer, 2016.

\bibitem{garcke2021sparse}
Harald Garcke, Kei~Fong Lam, and Andrea Signori.
\newblock Sparse optimal control of a phase field tumor model with mechanical
  effects.
\newblock {\em SIAM Journal on Control and Optimization}, 59(2):1555--1580,
  2021.

\bibitem{glowinski2008exact}
Roland Glowinski, Jacques-Louis Lions, and Jiwen He.
\newblock {\em Exact and Approximate Controllability for Distributed Parameter
  Systems: A Numerical Approach}.
\newblock Encyclopedia of Mathematics and its Applications. Cambridge
  University Press, 2008.

\bibitem{glowinski2022application}
Roland Glowinski, Yongcun Song, Xiaoming Yuan, and Hangrui Yue.
\newblock Application of the alternating direction method of multipliers to
  control constrained parabolic optimal control problems and beyond.
\newblock {\em Annals of Applied Mathematics}, 38(2):115--158, 2022.

\bibitem{he2002new}
Bingsheng He, Li-Zhi Liao, Deren Han, and Hai Yang.
\newblock A new inexact alternating directions method for monotone variational
  inequalities.
\newblock {\em Mathematical Programming}, 92:103--118, 2002.

\bibitem{herzog2010algorithms}
Roland Herzog and Karl Kunisch.
\newblock Algorithms for pde-constrained optimization.
\newblock {\em GAMM-Mitteilungen}, 33(2):163--176, 2010.

\bibitem{hintermuller2010semismooth}
Michael Hinterm\"uller.
\newblock Semismooth newton methods and applications.
\newblock Technical report, Department of Mathematics, Humboldt University of
  Berlin, 2010.

\bibitem{hintermuller2002primal}
Michael Hinterm{\"u}ller, Kazufumi Ito, and Karl Kunisch.
\newblock The primal-dual active set strategy as a semismooth newton method.
\newblock {\em SIAM Journal on Optimization}, 13(3):865--888, 2002.

\bibitem{hinze2009optimization}
Michael Hinze, Ren{\'e} Pinnau, Michael Ulbrich, and Stefan Ulbrich.
\newblock {\em Optimization with PDE Constraints}, volume~23 of {\em
  Mathematical Modelling: Theory and Applications}.
\newblock Springer, Dordrecht, 2009.

\bibitem{lin2022alternating}
Zhouchen Lin, Huan Li, and Cong Fang.
\newblock {\em Alternating Direction Method of Multipliers for Machine
  Learning}.
\newblock Springer, Singapore, 2022.

\bibitem{lions1971optimal}
Jacques~Louis Lions.
\newblock {\em Optimal control of systems governed by partial differential
  equations}, volume 170 of {\em Applied Mathematical Sciences}.
\newblock Springer-Verlag, Berlin, Heidelberg, New York, 1971.

\bibitem{mathew2010analysis}
Tarek~P. Mathew, Marcus Sarkis, and Christian~E. Schaerer.
\newblock Analysis of block parareal preconditioners for parabolic optimal
  control problems.
\newblock {\em SIAM Journal on Scientific Computing}, 32(3):1180--1200, 2010.

\bibitem{mcdonald2016all}
Eleanor McDonald.
\newblock {\em All-at-once solution of time-dependent PDE problems}.
\newblock PhD thesis, University of Oxford, 2016.

\bibitem{nesterov2004introductory}
Yurii Nesterov.
\newblock {\em Introductory Lectures on Convex Optimization: A Basic Course},
  volume~87 of {\em Applied Optimization}.
\newblock Springer, Boston, MA, 2004.

\bibitem{nocedal2006numerical}
Jorge Nocedal and Stephen~J. Wright.
\newblock {\em Numerical Optimization}.
\newblock Springer Series in Operations Research and Financial Engineering.
  Springer, New York, NY, 2 edition, 2006.

\bibitem{pearson2012regularization}
John~W. Pearson, Martin Stoll, and Andrew~J. Wathen.
\newblock Regularization-robust preconditioners for time-dependent
  pde-constrained optimization problems.
\newblock {\em SIAM Journal on Matrix Analysis and Applications},
  33(4):1126--1152, 2012.

\bibitem{porcelli2015preconditioning}
Margherita Porcelli, Valeria Simoncini, and Mattia Tani.
\newblock Preconditioning of active-set newton methods for pde-constrained
  optimal control problems.
\newblock {\em SIAM Journal on Scientific Computing}, 37(5):S472--S502, 2015.

\bibitem{rees2010optimal}
Tyrone Rees, H~Sue Dollar, and Andrew~J Wathen.
\newblock Optimal solvers for pde-constrained optimization.
\newblock {\em SIAM Journal on Scientific Computing}, 32(1):271--298, 2010.

\bibitem{schiela2014operator}
Anton Schiela and Stefan Ulbrich.
\newblock Operator preconditioning for a class of inequality constrained
  optimal control problems.
\newblock {\em SIAM Journal on Optimization}, 24(1):435--466, 2014.

\bibitem{song2023numerical}
Xiaoliang Song and Bo~Yu.
\newblock Numerical solution for sparse {PDE} constrained optimization.
\newblock In {\em Handbook of Mathematical Models and Algorithms in Computer
  Vision and Imaging: Mathematical Imaging and Vision}, pages 623--675.
  Springer, Cham, 2023.

\bibitem{song2024admm}
Yongcun Song, Xiaoming Yuan, and Hangrui Yue.
\newblock The {ADMM-PINNs} algorithmic framework for nonsmooth
  {PDE}-constrained optimization: A deep learning approach.
\newblock {\em SIAM Journal on Scientific Computing}, 46(6):C659--C687, 2024.

\bibitem{sprekels2021sparse}
J{\"u}rgen Sprekels and Fredi Tr{\"o}ltzsch.
\newblock Sparse optimal control of a phase field system with singular
  potentials arising in the modeling of tumor growth.
\newblock {\em ESAIM: Control, Optimisation and Calculus of Variations},
  27:S26, 2021.

\bibitem{stoll2014one}
Martin Stoll.
\newblock One-shot solution of a time-dependent time-periodic pde-constrained
  optimization problem.
\newblock {\em IMA Journal of Numerical Analysis}, 34(4):1554--1577, 2014.

\bibitem{stoll2012preconditioning}
Martin Stoll and Andy Wathen.
\newblock Preconditioning for partial differential equation constrained
  optimization with control constraints.
\newblock {\em Numerical Linear Algebra with Applications}, 19(1):53--71, 2012.

\bibitem{troeltzsch2010optimal}
Fredi Tr{\"o}ltzsch.
\newblock {\em Optimal Control of Partial Differential Equations: Theory,
  Methods and Applications}, volume 112 of {\em Graduate Studies in
  Mathematics}.
\newblock American Mathematical Society, Providence, RI, 2010.

\bibitem{ulbrich2007generalized}
Stefan Ulbrich.
\newblock Generalized sqp methods with “parareal” time-domain decomposition
  for time-dependent pde-constrained optimization.
\newblock In {\em Real-time PDE-constrained optimization}, pages 145--168.
  SIAM, 2007.

\bibitem{wang2025adaptive}
Fangyuan Wang, Qiming Wang, and Zhaojie Zhou.
\newblock Adaptive finite element approximation of sparse optimal control
  problem with integral fractional laplacian.
\newblock {\em Journal of Scientific Computing}, 102(1):17, 2025.

\bibitem{Wang2023Duality}
Hailing Wang, Di~Wu, Changjun Yu, and Kok~Lay Teo.
\newblock A duality-based approach for linear parabolic optimal control
  problems.
\newblock {\em Optimal Control Applications and Methods}, 45(3):1140--1165,
  2023.

\bibitem{xu2021iteration}
Yangyang Xu.
\newblock Iteration complexity of inexact augmented lagrangian methods for
  constrained convex programming.
\newblock {\em Mathematical Programming}, 185(1):199--244, 2021.

\end{thebibliography}
	

\end{document}